\newcommand{\vect}[2]{\vv{#1}_{\!#2}}
\newcommand\restr[2]{{
  \left.\kern-\nulldelimiterspace 
  #1
  \littletaller
  \right|_{#2}
  }}
\newcommand{\littletaller}{\mathchoice{\vphantom{\big|}}{}{}{}}
\newcommand{\1}{\mathbbm{1}}
\renewcommand{\P}{\mathbb{P}}
\newcommand{\R}{\mathbb{R}}
\newcommand{\E}{\mathbb{E}}
\newcommand{\B}{\mathrm{B}}
\newcommand{\indep}{\perp \!\!\! \perp}
\newtheorem{theorem}{Theorem}
\newtheorem*{informalmain}{Informal main results}
\newtheorem{definition}[theorem]{Definition} 
\newtheorem{proposition}[theorem]{Proposition}
\newtheorem{lemma}[theorem]{Lemma}
\newtheorem{conjecture}[theorem]{Conjecture}
\newtheorem{remark}[theorem]{Remark}
\newtheorem*{assumHbeta}{Properties $(\mathcal{P}_\beta)$}
\newtheorem{maintheorem}{Theorem}
\newtheorem{assumption}{Assumption}
\DeclareMathOperator*{\argmin}{arg\,min}
\def\thm@space@setup{%
    \thm@preskip=4pt plus 2pt minus 2pt % Space above theorem
    \thm@postskip=0pt plus 2pt minus 2pt % Space below theorem
}
\title{
Generalization bounds for score-based generative models:
\\
a synthetic proof
}
\author[1,2]{Arthur Stéphanovitch\thanks{\href{mailto:arthur.stephanovitch@ens.fr}{arthur.stephanovitch@ens.fr} 
--- 
\href{https://sites.google.com/view/arthur-stephanovitch/}{https://sites.google.com/view/arthur-stephanovitch/}}}
\author[1]{Eddie Aamari\thanks{\href{mailto:eddie.aamari@ens.fr}{eddie.aamari@ens.fr} 
---
\href{https://www.math.ens.psl.eu/~eaamari/}{https://www.math.ens.psl.eu/${\sim}$eaamari/}}}
\author[3]{Clément Levrard\thanks{\href{mailto:clement.levrard@univ-rennes1.fr}{clement.levrard@univ-rennes1.fr} 
--- 
\href{https://www.normalesup.org/~levrard/}{https://www.normalesup.org/${\sim}$levrard/}}}
\affil[1]{Département de Mathématiques et Applications --- École Normale Supérieure, U. PSL, CNRS 
\newline Paris, France
} 
\affil[2]{Laboratoire de Probabilités, Statistiques et Modélisation --- U. Paris Cité, Sorbonne U., CNRS 
\newline Paris, France
}
\affil[3]{Institut de Recherche Mathématique de Rennes --- U. de Rennes, CNRS 
\newline Rennes, France
}
\date{}
\begin{document}

\maketitle

\begin{abstract}
\noindent 
We establish minimax convergence rates for score-based generative models (SGMs) under the $1$-Wasserstein distance.
Assuming the target density $p^\star$ lies in a nonparametric $\beta$-smooth H\"older class with either compact support or subGaussian tails on $\mathbb{R}^d$, we prove that neural network-based score estimators trained via denoising score matching yield generative models achieving rate $n^{-(\beta+1)/(2\beta+d)}$ up to polylogarithmic factors. Our unified analysis handles arbitrary smoothness $\beta > 0$, supports both deterministic and stochastic samplers, and leverages shape constraints on $p^\star$ to induce regularity of the score. The resulting proofs are more concise, and grounded in generic stability of diffusions and standard approximation theory.
\end{abstract}

%\tableofcontents

\section{Overview
}
\subsection{Generative modeling}

Given data $\mathbb{X}^{(n)} := \{X^{(1)},\ldots,X^{(n)}\}$ sampled from an unknown distribution $p^\star(x)\mathrm{d}x$ on $\mathbb{R}^d$, \emph{generative modeling} is about how to produce some fake $n+1^{\text{st}}$ sample $\widehat{X}^{(n+1)}$ that has approximately distribution $p^\star$.
To do so, a versatile idea consists in drawing an independent uninformative \emph{seed point} $\widehat{X}_0$ from an easy-to-sample distribution $\mu_0(x)\mathrm{d}x$ --- usually uniform or Gaussian---, and then apply to it an informative algorithm $\widehat{\mathtt{A}}$ built from $\mathbb{X}^{(n)}$, and designed specifically so that the pushforward distribution 
$\widehat{p} := \widehat{\mathtt{A}}_{\#}\mu_0$ of $\widehat{X}^{(n+1)} := \widehat{\mathtt{A}}(\widehat{X}_0)$ has approximately distribution $p^\star$.

\subsubsection{Score-based marginal time reversal(s) for diffusions}

To date, \emph{score-based generative models} (SGM) are among the most prominent generation methods. 
They include \textit{Denoising Diffusion Probabilistic Models} (DDPM) and   \textit{Denoising Diffusion Implicit Models} (DDIM), also called \emph{Probability Flow ODE}~\citep{song2020score}. 
These methods design $\widehat{\mathtt{A}}$ via a stochastic differential equation (SDE) formalism, and achieve state-of-the-art performances for all types of data except text~\citep{kong2020diffwave,dhariwal2021diffusion,hoogeboom2022equivariant}.
In these methods, the core idea consists in designing a deterministic or stochastic forward dynamic $(\vect{X}{t})_t$, called \emph{forward process}, with initial distribution $\vect{X}{0} \sim p$ and terminal distribution $\vect{X}{\overline{T}} \approx \vect{p}{\infty}$.
Then, if the marginal distributions $(\vect{p}{t})_{0 \leq t \leq \overline{T}}$ was fully known, one may be able to run a \emph{backward process} $(X_t)_{0 \leq t \leq \overline{T}}$ such that starting from $X_0 \approx \vect{p}{\infty}$, $\widehat{\mathtt{A}}(X_0) := X_{\overline{T}}$ is approximately distributed as $\vect{p}{0} = p$.

More precisely, given a user-defined \emph{backward diffusion schedule} $(b_t)_{0 \leq t \leq \overline{T}}$, the Fokker-Planck equation driving the dynamic at the level of probability densities allows to get the following stochastic dynamic, which involves the so-called \emph{score function} $\nabla \log \vect{p}{t}$ of the forward dynamic.
Throughout this article, notation $(B_t)_t$ stands for a $d$-dimensional isotropic Brownian motion independent from the data, as well as the initial conditions of the SDEs to come.
We refer the reader to~\cite{le2016brownian} for the necessary background on Itô calculus and SDEs.

\begin{theorem}[Marginal reversal(s) of an SDE,~\cite{anderson1982reverse}]
\label{thm:backward-SDE}
If the solution to $\mathrm{d} \vect{X}{t} = \vect{a}{t}(\vect{X}{t}) \mathrm{d} t + \sqrt{2} \vect{b}{t} \mathrm{d} B_t$ has distribution $\vect{X}{t} \sim \vect{p}{t}(x) \mathrm{d} x$, then the solution to
\begin{align*}
\begin{cases}
\mathrm{d} X_t
=
a_t(X_t) \mathrm{d} t 
+
\sqrt{2} b_t \mathrm{d} B_t
\\
X_0 \sim \vect{p}{\overline{T}}(x) \mathrm{d} x
\end{cases}
\end{align*}
with 
$a_t := -\vect{a}{\overline{T}-t} + (\vect{b}{\overline{T}-t}^2 + b_t^2) \nabla \log \vect{p}{\overline{T}-t}$
satisfies
$
X_t
\sim 
\vect{X}{\overline{T}-t}
.$
\end{theorem}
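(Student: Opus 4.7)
The plan is to verify the statement at the level of marginal densities through the Fokker--Planck (FP) equation, exploiting the fact that two Markov processes whose densities satisfy the same FP equation with the same initial condition coincide in law (up to standard well-posedness hypotheses). Denote by $q_t$ the marginal density of the candidate backward process $(X_t)_{0 \leq t \leq \overline{T}}$. The goal is to show that the ansatz $q_t(x) = \vect{p}{\overline{T}-t}(x)$ solves the FP equation associated with the backward SDE, with matching initial condition, and then invoke uniqueness.

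First, I would write the two FP equations explicitly. The forward process satisfies
\begin{equation*}
\partial_s \vect{p}{s}
=
- \nabla \cdot \bigl( \vect{a}{s} \vect{p}{s} \bigr)
+
\vect{b}{s}^2 \, \Delta \vect{p}{s},
\end{equation*}
while any solution to the candidate backward SDE has density obeying
\begin{equation*}
\partial_t q_t
=
- \nabla \cdot ( a_t q_t )
+
b_t^2 \, \Delta q_t.
\end{equation*}
Substituting the ansatz $q_t = \vect{p}{\overline{T}-t}$ into the second equation and using $\partial_t q_t = - \partial_s \vect{p}{s}\big|_{s = \overline{T}-t}$, the identity to establish becomes
\begin{equation*}
\nabla \cdot \bigl( (\vect{a}{s} + a_t) \vect{p}{s} \bigr)
=
(\vect{b}{s}^2 + b_t^2) \, \Delta \vect{p}{s},
\end{equation*}
with $s = \overline{T}-t$. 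Rewriting $\Delta \vect{p}{s} = \nabla \cdot ( \vect{p}{s} \nabla \log \vect{p}{s} )$ and factoring $\vect{p}{s}$, a sufficient condition is the pointwise relation $\vect{a}{s} + a_t = (\vect{b}{s}^2 + b_t^2) \nabla \log \vect{p}{s}$, which is precisely the prescribed $a_t := -\vect{a}{\overline{T}-t} + (\vect{b}{\overline{T}-t}^2 + b_t^2) \nabla \log \vect{p}{\overline{T}-t}$. The initial condition $q_0 = \vect{p}{\overline{T}}$ holds by hypothesis, so $q_t$ and $\vect{p}{\overline{T}-t}$ satisfy the same Cauchy problem.

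Finally, I would invoke uniqueness of the FP Cauchy problem with the stated drift and diffusion coefficients to conclude $q_t = \vect{p}{\overline{T}-t}$, hence $X_t \sim \vect{X}{\overline{T}-t}$ marginally. The main technical obstacle is precisely this uniqueness step: it requires enough regularity and integrability of $\vect{p}{s}$, of $\nabla \log \vect{p}{s}$, and of $\vect{a}{s}$ on $(0,\overline{T}]$ for the backward SDE to be well-posed and for the divergence-form FP equation to admit a unique weak solution in a suitable class. In the SGM setting this is standard: after any positive time the forward density inherits smoothness and Gaussian-like decay from the noising kernel, so $\nabla \log \vect{p}{s}$ is Lipschitz on compact sets with polynomial growth, which is sufficient to invoke classical uniqueness results (e.g., Friedman or Figalli) and close the argument. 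Note that this derivation transparently covers both the stochastic sampler ($b_t = \vect{b}{\overline{T}-t}$) and the probability flow ODE ($b_t \equiv 0$) in a single stroke.
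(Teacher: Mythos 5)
The paper does not give a proof of Theorem~\ref{thm:backward-SDE}; it is stated as a cited background result from Anderson (1982), so there is no internal proof to compare against. Your Fokker--Planck derivation is correct: the computation of the forward and backward FP equations is consistent with the $\sqrt{2}\vect{b}{t}$ normalization (diffusion term $\vect{b}{t}^2\Delta$), the substitution $q_t = \vect{p}{\overline{T}-t}$ and the identity $\Delta \vect{p}{s} = \nabla\cdot(\vect{p}{s}\nabla\log\vect{p}{s})$ reduce the matching condition to exactly the prescribed drift $a_t = -\vect{a}{\overline{T}-t} + (\vect{b}{\overline{T}-t}^2+b_t^2)\nabla\log\vect{p}{\overline{T}-t}$, and the appeal to uniqueness of the FP Cauchy problem is the correct closing step. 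You rightly flag that the whole argument only yields \emph{marginal} equality in law (not pathwise), and that well-posedness/uniqueness requires regularity of $\nabla\log\vect{p}{s}$, which in the SGM setting is supplied by the Gaussian mollification of $\vect{p}{s}$ for $s>0$. Anderson's original argument proceeds via time-reversal of Markov diffusions and Kolmogorov equations and is stated for the specific choice $b_t=\vect{b}{\overline{T}-t}$; your FP-matching route is the standard modern derivation of the \emph{family} of reversals and handles all $b_t\ge 0$ (including the probability flow ODE $b_t\equiv 0$) in one stroke, which is precisely the generality the theorem statement requires.
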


The above result highlights the fact that reversing an Itô stochastic process \emph{marginally} leaves lots of degrees of freedom in the \emph{backward diffusion coefficient} $b_t$.
For instance, the founding idea of DDIM consists in choosing $b_t := 0$, while DDPM takes instead $b_t :=\vect{b}{\overline{T}-t}$.
Many other variants/combinations co-exist in practice, for instance with profiles $b_t$ alternating between the two previous choices~\citep{karras2022elucidating}.

\subsubsection{Ornstein-Uhlenbeck forward,  backward, and approximate backward}
\label{sec:OU-generation}

This work will only consider so-called \emph{Variance-Preserving SDE} for the forward process, meaning that it corresponds to a time-changed \emph{Ornstein-Uhlenbeck} process.
Conversely, to cover various popular sampling strategies, it leaves the backward diffusion term $b$ depend freely on time $t$. 

\paragraph{True forward}
Given a \emph{diffusion coefficient} $\sigma > 0$, write $(\vect{X}{t})_t$ for the solution to the Ornstein-Uhlenbeck process
\begin{align}
\label{eq:forward-true}
\begin{cases}
\mathrm{d} \vect{X}{t}
=
- \vect{X}{t} \mathrm{d} t 
+
\sqrt{2}\sigma \mathrm{d} B_t
\\
\vect{X}{0} \sim p^\star(x) \mathrm{d} x
,
\end{cases}
\end{align}
where $p^\star$ is the (unknown) data distribution.
One easily shows that the above SDE has a unique solution given by
\begin{align}
\label{eq:law-at-time-t}
    \vect{X}{t} 
    &=
    \vect{X}{0} e^{-t} + \int_{0}^t \sqrt{2\sigma^2} e^{s-t} \mathrm{d} B_s
    .
\end{align}
If $\vect{X}{0} \indep Z$ with $Z \sim \mathcal{N}(0,I_{d \times d})$, marginals $\vect{X}{t}$ can then be represented in distribution as
\begin{align}\label{align:lawofxt}
    \vect{X}{t} \sim e^{-t}\vect{X}{0} + \sigma_t Z
    ,
\end{align}
where 
\begin{align}
\label{eq:sigma_t}
 \sigma_t
 &:= 
\sqrt{
    1 - e^{-2t}
}
\sigma
.
\end{align}
In what follows, we will denote $\vect{X}{t} \sim \vect{p}{t}(x) \mathrm{d} x$,
so that
$\vect{p}{0} = p^\star$ and $\vect{p}{\infty}(x) \mathrm{d}x := \mathcal{N}(0,\sigma^2 I_{d\times d})$.

\begin{remark}[On seemingly more general Ornstein-Uhlenbck processes]
\label{rem:general-OU}
Authors and practitioners sometimes consider dynamics of the form $\vect{Y}{0} \sim p^\star(x) \mathrm{d} x$ and
$
\mathrm{d} \vect{Y}{t}
=
- \lambda \alpha_t \vect{Y}{t} \mathrm{d} t 
+
\sqrt{2 \alpha_t }\sigma \mathrm{d} B_t
,
$
for general \emph{drift coefficient} $\lambda > 0$,  and deterministic \emph{noise schedule} $\alpha : \R_{+} \to \R_{>0}$.
For $\lambda = 1$, one easily sees that $\vect{Y}{t} \sim \vect{X}{u_t}$ with $u_t := \int_{0}^t \alpha_s \mathrm{d} s$. 
Therefore, all the results below extend to such a general framework as soon as $u_t \xrightarrow[t \to \infty]{} \infty$.
In fact, statements are essentially identical, differing only by replacing $t$ with $u_t$ and $\sigma^2$ with $\sigma^2/\lambda$.
\end{remark}

\paragraph{True backward}
Let $\overline{T} >0$ be a finite \emph{time horizon} to be specified later.
For short, let us denote the true \emph{score function} by
\begin{equation}\label{eq:scfunc}
    s^\star(t,x):= \nabla \log \vect{p}{t}(x).
\end{equation}

Given a deterministic \emph{backward diffusion coefficient} $b : [0,\overline{T}] \to \R_{+}$, write $(X_t)_t$ for the solution to
\begin{align}
\label{eq:backward-true}
\begin{cases}
\mathrm{d} X_t
=
a_t(X_t) \mathrm{d} t 
+
\sqrt{2} b_t \mathrm{d} B_t
\\
X_0 \sim \vect{p}{\overline{T}}(x) \mathrm{d} x
,
\end{cases}
\end{align}
where $a_t(x) := x + \bigl( \sigma^2 + b_t^2 \bigr) s^\star(\overline{T}-t,x)$.
From Theorem~\ref{thm:backward-SDE}, we have $X_t \sim \vect{X}{\overline{T}-t}$ for all $t \in [0,\overline{T}]$.
For this reason, we will write $p_t := \vect{p}{\overline{T}-t}$, so that $X_t \sim p_t(x) \mathrm{d}x$.

\paragraph*{Approximate backward}
For unknown distribution $p^\star = \vect{p}{0}$, the idea of SGMs is to run a plugin SDE based on~\eqref{eq:backward-true}.
To avoid zones where $s^\star(t,\cdot)$ blows up (see Section~\ref{sec:scoreapp}), let us fix $\underbar{T} \leq \overline{T} $ be a positive \emph{early stopping time} to be specified later.
Given some approximate function $\widehat{s} : [\underbar{T},\overline{T}] \times \R^d \to \R^d$ meant to approximate $s^\star$, consider the surrogate SDE
\begin{align}
\label{eq:final-generator}
\begin{cases}
\mathrm{d} \widehat{X}_t
=
\widehat{a}_t(\widehat{X}_t) \mathrm{d} t 
+
\sqrt{2} b_t \mathrm{d} B_t
\\
\widehat{X}_0 \sim \vect{p}{\infty}(x) \mathrm{d} x
,
\end{cases}
\end{align}
where $\widehat{a}_t(x) :=  x + \bigl( \sigma^2 + b_t^2 \bigr) \widehat{s}(\overline{T}-t,x)$.
After running the SDE until time $\overline{T}-\underline{T}$, the new \emph{fake} sample that is output by the method is 
$$
\widehat{X}^{(n+1)} := \widehat{X}_{\overline{T}-\underline{T}}
.
$$
In what follows, we will write $\widehat{X}_t \sim \widehat{p}_{t}(x) \mathrm{d} x$.

At this point, note that when approximating the distribution of $\vect{X}{0}$ with that of $\widehat{X}_{\overline{T}-\underline{T}}$, error in distribution arises from the three quite different plug-in tricks. 
Indeed, keeping in mind that we would like to have $\widehat{X}_t \approx \vect{X}{t}$ in distribution:
\begin{itemize}[leftmargin=*]
    \item 
    For $\overline{T}< \infty$, their respective starting distributions $\vect{p}{\infty}$ and $\vect{p}{\overline{T}}$ do not exactly coincide, hence inducing bias.

    \item 
    For $\widehat{s} \neq s^\star$, their respective drifts differ, hence leading to different dynamics.
    
    \item
     For $\underline{T}>0$, the early stopping make our process incomplete, hence adding another bias term.
\end{itemize}
These error terms will be made precise in Section~\ref{sec:stability-of-sdes}.
The first and third error terms can be dealt with pretty easily through the explicit expression~\eqref{eq:law-at-time-t}, and do not rely on any particular training.
It is the second error term which requires an actual pre-training phase from data $\mathbb{X}^{(n)}$, which we now address.

\subsubsection{Score estimation}

As will be clear in Section~\ref{sec:stability-of-sdes}, a natural quantity controlling the closeness of marginals of the target process~\eqref{eq:forward-true} and its approximant~\eqref{eq:final-generator} is the so-called time-integrated \emph{Fisher loss}, defined for any given candidate $s: [\underline{T},\overline{T}] \times \R^d \to \R^d$ as
\begin{align*}
%\label{eq:fisher}
\mathcal{E}_{\underline{T},\overline{T}}(s)
:=
\int_{\underline{T}}^{\overline{T}}
\mathbb{E}
[
\|s^\star(t,\vect{X}{t}) - s(t,\vect{X}{t})\|^2 
]
\, \mathrm{d} t
.
\end{align*}
Despite its native expression involving target $s^\star(t,x)$ explicitly, it can be rewritten as an average  involving $s$ only, with respect to an easily samplable distribution (conditionally on data).

\begin{theorem}[Denoising score trick ---~\cite{vincent2011connection}]
\label{thm:Vincent}
With notation of Section~\ref{sec:OU-generation}, let $t \in [0,\overline{T}]$ be fixed.
Let $X \sim p^\star(x)\mathrm{d}x$ and $Z \sim \mathcal{N}(0,I_{d\times d})$ be independent, and write $\vect{X}{t} := e^{- t}X + \sigma_t Z$ (recall~\eqref{eq:sigma_t}).
Then there exists $C_{p^\star,\sigma_t,d}$ depending only on $p^\star$, $\sigma_t$, and $d$, such that for all integrable candidate score function $s : [0,\overline{T}] \times \R^d \to \R^d$,
\begin{align*}
\mathbb{E}
[
\|s^\star(t,\vect{X}{t}) - s(t,\vect{X}{t})\|^2 
]
=
\mathbb{E} 
[
\|s(t, \vect{X}{t} ) + Z / \sigma_t\|^2 
]
+ 
C_{p^\star,\sigma_t,d}.
\end{align*}
\end{theorem}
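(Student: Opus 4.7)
The plan is to verify the identity by expanding both squared norms, noting that the $\|s\|^2$ terms cancel and the purely $s^\star$- and $Z$-dependent pieces contribute only a constant depending on $(p^\star, \sigma_t, d)$, and then matching the remaining cross-terms via a conditional representation of the score. Writing
\begin{align*}
\|s^\star(t,\vect{X}{t}) - s(t,\vect{X}{t})\|^2
&= \|s^\star(t,\vect{X}{t})\|^2 - 2\langle s^\star(t,\vect{X}{t}), s(t,\vect{X}{t})\rangle + \|s(t,\vect{X}{t})\|^2, \\
\|s(t,\vect{X}{t}) + Z/\sigma_t\|^2
&= \|s(t,\vect{X}{t})\|^2 + 2\langle s(t,\vect{X}{t}), Z/\sigma_t\rangle + \|Z\|^2/\sigma_t^2,
\end{align*}
and taking expectations, the $\|s\|^2$ terms cancel, while $\mathbb{E}[\|s^\star(t,\vect{X}{t})\|^2]$ and $\mathbb{E}[\|Z\|^2]/\sigma_t^2 = d/\sigma_t^2$ depend only on $(p^\star, \sigma_t, d)$ and together play the role of $C_{p^\star,\sigma_t,d}$. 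It therefore suffices to establish
\begin{equation*}
\mathbb{E}\bigl[\langle s^\star(t,\vect{X}{t}), s(t,\vect{X}{t})\rangle\bigr]
= -\mathbb{E}\bigl[\langle Z/\sigma_t, s(t,\vect{X}{t})\rangle\bigr].
\end{equation*}

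The central step is to prove the Tweedie-type formula $s^\star(t,x) = -\mathbb{E}[Z/\sigma_t \mid \vect{X}{t}=x]$. From~\eqref{align:lawofxt}, the transition density is Gaussian,
\begin{equation*}
p_{t\mid 0}(x\mid x_0) = (2\pi\sigma_t^2)^{-d/2}\exp\!\left(-\frac{\|x-e^{-t}x_0\|^2}{2\sigma_t^2}\right),
\end{equation*}
with gradient $\nabla_x \log p_{t\mid 0}(x\mid x_0) = -(x-e^{-t}x_0)/\sigma_t^2$. Differentiating under the integral in $\vect{p}{t}(x) = \int p_{t\mid 0}(x\mid x_0)\, p^\star(x_0)\,\mathrm{d} x_0$ and dividing by $\vect{p}{t}(x)$ then expresses $s^\star(t,x)$ as a posterior expectation,
\begin{equation*}
s^\star(t,x)
= \int \nabla_x \log p_{t\mid 0}(x\mid x_0)\, \frac{p_{t\mid 0}(x\mid x_0)\, p^\star(x_0)}{\vect{p}{t}(x)}\,\mathrm{d} x_0
= -\mathbb{E}\!\left[\frac{\vect{X}{t} - e^{-t}X}{\sigma_t^2} \,\middle|\, \vect{X}{t}=x\right],
\end{equation*}
which reduces to $-\mathbb{E}[Z/\sigma_t \mid \vect{X}{t}=x]$ via the defining relation $\vect{X}{t} - e^{-t}X = \sigma_t Z$.

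The conclusion is then immediate: since $s(t,\vect{X}{t})$ is $\vect{X}{t}$-measurable, the tower property gives
\begin{align*}
\mathbb{E}\bigl[\langle s^\star(t,\vect{X}{t}), s(t,\vect{X}{t})\rangle\bigr]
&= \mathbb{E}\bigl[\langle \mathbb{E}[-Z/\sigma_t \mid \vect{X}{t}], s(t,\vect{X}{t})\rangle\bigr] \\
&= -\mathbb{E}\bigl[\langle Z/\sigma_t, s(t,\vect{X}{t})\rangle\bigr],
\end{align*}
which is the sought cross-term identity. The only mildly delicate point is the differentiation under the integral defining $\vect{p}{t}$; this is routine via dominated convergence, since the Gaussian transition kernel and its $x$-gradient decay rapidly in $x_0$, uniformly on compact sets in $x$. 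All remaining steps are purely algebraic once the conditional representation of $s^\star$ is in hand.
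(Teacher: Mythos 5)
Your proof is correct and is the standard derivation of Vincent's denoising score matching identity: the paper itself does not prove this statement but simply cites~\cite{vincent2011connection}, so there is no in-paper argument to compare against. Your two-step structure — expansion and cancellation of the common $\|s\|^2$ and constant terms, followed by matching the cross-terms via the Tweedie/posterior-mean representation $s^\star(t,x) = -\mathbb{E}[Z/\sigma_t \mid \vect{X}{t}=x]$ and the tower property — is precisely how the result is obtained in the literature. The only small point worth flagging is that the cancellation of $\mathbb{E}[\|s(t,\vect{X}{t})\|^2]$ and the existence of the cross-terms tacitly require $s(t,\vect{X}{t})$ to be square-integrable (and likewise $\mathbb{E}[\|s^\star(t,\vect{X}{t})\|^2]<\infty$ for $C_{p^\star,\sigma_t,d}$ to be finite); under the paper's standing assumptions this holds, and the identity otherwise holds trivially with both sides infinite, but it would be tidy to say so explicitly.
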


In the above context, data $\mathbb{X}^{(n)} = \{X^{(1)}, \dots, X^{(n)}\}$ allows to perform a preliminary fit of $\widehat{s}(t,x)$ to estimate the true score function $s^\star(t,x)$.
This score estimation step is preliminary to the sampling step~\eqref{eq:final-generator}.
In light of Theorem~\ref{thm:Vincent},
it is then natural to minimize an empirical version of the \emph{score-matching loss}. 
Hence, defining for $x \in \mathbb{R}^d$ the loss function $\gamma_{[\underline{T},\overline{T}]}(s,x)$ by
\begin{align}
\label{eq:contrast}
\gamma_{[\underline{T},\overline{T}]}(s,x)
:= \int_{\underline{T}}^{\overline{T}} \mathbb{E}_{Z\sim \mathcal{N}(0,I_{d \times d})} \|s(t, e^{-t} x + \sigma_t Z) + Z / \sigma_t\|^2 \mathrm{d}t,
\end{align}
the score estimator based on data $\mathbb{X}^{(n)}$ is  defined as a minimizer of an empirical loss function, that is
\begin{equation}\label{eq:scoreestimatorheuristic}
    \widehat{s}\in \argmin_{s\in \mathcal{S}} \frac{1}{n} \sum_{i=1}^n
    \gamma_{[\underline{T},\overline{T}]}(s,X^{(i)})
,
\end{equation}
where $\mathcal{S}$ is a user-defined class of candidate functions $s:[\underline{T},\overline{T}] \times \R^d \to \R^d$, which will be taken as neural networks eventually.

\begin{remark}[On numerical considerations]
Throughout, we will assume to have access to an exact integrator with respect to measure $g_d(\mathrm{d} z) \mathrm{d}t$, where $g_d = \mathcal{N}(0,I_{d \times d})$.  Therefore, the values of $\gamma_{[\underline{T},\overline{T}]}(s,X^{(i)})$ are fully available given evaluations of $s$, and hence $\widehat{s}$ is indeed an estimator in the usual statistical sense.
However, in practice, errors due to additional discretizations of time and Gaussian Monte Carlo simulations need to be taken into account.
Such a point also applies to the optimization method used to approximate the  minimum of the highly non-convex minimization problem~\eqref{eq:scoreestimatorheuristic}.
All our results extend, in a quantitative way, to the case where~\eqref{eq:scoreestimatorheuristic} is solved up to small enough tolerance.
As this work focuses on the statistical aspects of SGMs, we shall hence consider an idealized framework where~\eqref{eq:scoreestimatorheuristic} is solved exactly given a function class~$\mathcal{S}$. 
See~\cite{zhang2023fast} for more on practical numerical considerations.
\end{remark}

\subsection{Convergence rates for SGMs}
\subsubsection{Prior works}
\label{sec:priro-works}

In recent years, several works have established minimax convergence rates for score-based generative models (SGMs), under various distributional assumptions, evaluation metrics, and with different score estimation strategies.
Let us summarize those most relevant to our context, breaking them into SDE sampling ($b_t > 0$ in~\eqref{eq:final-generator}) and ODE sampling ($b_t = 0$ in~\eqref{eq:final-generator}).

\begin{itemize}[leftmargin=*]
    \item For stochastic sampling methods:

\begin{itemize}[leftmargin=*]
    \item~\cite{oko2023diffusion} is the first article showing that SGMs achieve near-minimax rates for density estimation.
    The authors consider densities belonging to the Besov space $\mathcal{B}^\beta_{p,q}$ defined on $[0,1]^d$. These densities are assumed to be bounded below and $\mathcal{C}^\infty$ near the boundary. 
    This paper works both in $\mathrm{W}_1$ and $\mathrm{TV}$ distances.
    Despite a flaw in one of their core concentration technical result (Lemma C.4), its fixability (see Proposition~\ref{prop:tradeoffbv}) ensures that this work remains seminal.

    \item In~\cite{Zhang24},
the authors study subGaussian densities belonging to a Hölder space $\mathcal{H}^\beta$ on $\mathbb{R}^d$ with $\beta \leq 2$. They show that SGMs achieve minimax rates for estimating the density in $\mathrm{TV}$ distance, using a kernel method for estimating the score.
They are the first to work without any lower bound assumption on $p^\star$.

\item 
In~\cite{dou2024optimal}, the authors consider densities bounded below in the Hölder space $\mathcal{H}^\beta$ on $[-1,1]$, for general $\beta > 0$. They derive minimax rates for score estimation, which are then translated into minimax rates for density estimation, again using a kernel estimator. In particular, the rates are obtained for the distance $\mathrm{TV}$ without a logarithmic factor.

\item 
In~\cite{kwon2025nonparametric}, the authors consider densities bounded below in $\mathcal{H}^\beta$ on $[-1,1]^d$, for general $\beta > 0$ and that are factorizable. 
This means that $p^\star(x)=\prod_{I\in \mathcal{I}}g_{I}(x_{I})$ for a subset of variable indices $\mathcal{I}\subset 2^{\{1,...,d\}}$ and functions $g_{I}:\mathbb{R}^{|I|}\rightarrow \mathbb{R}$. 
There, SGMs provably achieve the minimax rate in TV distance and that this rate only depends on $d^{'}=\max_{I\in \mathcal{I}} |I|$.

\end{itemize}
\item 
For the deterministic sampling methods:
\begin{itemize}[leftmargin=*]

    \item~\cite{fukumizu2024flow} deals with flow matching, a slightly different framework than classical SGMs. 
    They provide nearly minimax rates for the flow matching version of the probability flow ODE for $\mathrm{W}_r$ distance with $r \in [1,2]$.
    They consider densities bounded below, belonging to a Besov space $\mathcal{B}^\beta$ on $[0,1]^d$, that are even smoother near the boundary. Furthermore, they work under a Lipschitz-type assumption on the chosen flow, amounting to assume that $s^\star(t,\cdot)$ is $(t^{-1})$-Lipschitz when phrased into SGMs. 
    Unfortunately, to date, it should be noted that a  flaw --- in the inner integration bounds on top of p.14 ---, seems to affect the overall correctness of the proof. The recent work of~\cite{kunkel2025minimax} circumvents this issue by using velocity fields with an integrable Lipschitz norm.

     \item~\cite{cai2025minimax} studies subGaussian densities belonging to $\mathcal{H}^\beta$ on $\mathbb{R}^d$ with $\beta \leq 2$ and establishes the first minimax optimality result for Probability Flow ODE. 
     The authors use a kernel-based score estimator, yielding a strong estimation of the score \emph{and} its Jacobian.
    The result is almost minimax in $\mathrm{TV}$ distance.

\end{itemize}
\end{itemize}

It is worth noting that numerous works on the quantitative convergence of diffusion models assume some Lipschitz regularity of the score function~\citep{chen2022sampling,pmlr-v202-chen23q,wibisono2024optimal,lee2023convergence}. Furthermore, this assumption appeared particularly useful for establishing convergence of ODE methods, as it is commonly employed in most related studies~\citep{chen2023probability,benton2023error,huang2024convergence}. In fact, approximating the Jacobian of the score is necessary when working in total variation distance~\cite{li2024sharp}.

The works above lay a strong foundation for understanding the theoretical performance of SGMs across different settings. In this paper, we present a synthetic proof of the minimax optimality of score-based models in the $\mathrm{W}_1$ distance, under both stochastic and deterministic sampling schemes. Our argument builds on regularity properties of the true score established in~\cite{stephanovitch2024smooth}, which allows for a significantly more concise proof.

For completeness, we also note a series of related works that address distributions supported on manifolds~\citep{pmlr-v238-tang24a,azangulov2024convergence,yakovlev2025generalization}. Since our goal here is to unify and simplify existing results and proofs, we concentrate on the simpler setting of distributions with densities. Nonetheless, in Section~\ref{sec:perspective}, we discuss how our approach could potentially extend to distributions supported on lower-dimensional submanifolds.

\subsubsection{Main contribution}
\label{sec:main-contribution}

Throughout, we will work under two complementary types of assumptions on the targeted density $p^\star:\R^d \to \R_+$ with $d\geq 3$. Namely, a classical Hölder regularity constraint, and a shape constraint. 
When combined, they ensure quantitative regularity of the score function, and hence make the overall pipeline easily analyzable from generic stability of SDEs, as well as concentration and approximation arguments~  (Section~\ref{sec:two-step-analysis}).

Before diving into the technical details, we provide an informal version of our main results. 
In what follows, the $1$-Wasserstein distance between two probability distributions $P$ and $Q$ on $\R^d$ is denoted by
\begin{align*}
    \mathrm{W}_1(P,Q)
    :=
    \sup_{f \in \mathrm{Lip}_1(\R^d,\R)}
    \left\{
    \int_{\R^d} f(x) P(\mathrm{d} x)
    -
    \int_{\R^d} f(x) Q(\mathrm{d} x)
    \right\}
    ,
\end{align*}
and we shall identify distributions $P(\mathrm{d}x) = p(x)\mathrm{d}x$ with their Lebesgue density
$p$ for sake of brevity. \begin{informalmain}
    Given $\beta > 0$, let $\mathcal{A}^\beta$ denote the model consisting of all the subGaussian $\beta$-Hölder probability densities $p^\star : \Omega \to \R_{+}$
    defined:
    \begin{itemize}[leftmargin=*]
        \item 
        either on a compact convex set $\Omega$, with a tempered decay at the boundary;
        \item 
        or on $\Omega = \mathbb{R}^d$, with a tempered decay at infinity.
        \end{itemize}

    Let $\mathbb{X}^{(n)} := \{ X^{(1)},\ldots,X^{(n)}\}$ be drawn i.i.d. from some unknown $p^\star \in \mathcal{A}^\beta$.
\begin{itemize}[leftmargin=*]
    \item \textit{(Upper bound)}
    Let $\widehat{s}$ be the estimated score obtained by score matching~\eqref{eq:scoreestimatorheuristic} over a well-chosen neural network class $\mathcal{S}$ with a $\tanh$ activation function.
    Then for well-chosen $0 < \underline{T} < \overline{T} < \infty$, the distribution $\widehat{p}_{\mathrm{SGM}}$ of the early-stopped backward diffusion $ \widehat{X}_{\overline{T}-\underline{T}}$ from~\eqref{eq:final-generator} satisfies
    \begin{align}
    \tag{Theorem~\ref{thm:cv_rates}}
 \mathbb{E}
 \left[\mathrm{W}_1(p^\star,\widehat{p}_{\mathrm{SGM}})\right] 
 \underset{{\mathrm{polylog}(n)}}{\lesssim}
 n^{-\frac{\beta+1}{2\beta + d}}
 ,
\end{align}
where the expectation is taken with respect to $\mathbb{X}^{(n)}$.

\item
\textit{(Lower bound)}
This rate is minimax optimal over model $\mathcal{A}^\beta$ up to $\mathrm{polylog}(n)$ factors, in the sense that
\begin{align}
    \tag{Theorem~\ref{thm:borneinf}}
    \inf_{\widehat{p}_n} \sup_{p^\star \in \mathcal{A}^\beta}\ \mathbb{E}[\mathrm{W}_1(p^\star,\widehat{p}_n)] 
     \gtrsim 
     n^{-\frac{\beta+1}{2\beta + d}}.
\end{align}

\end{itemize}

\end{informalmain}

\subsubsection{Comments}

\paragraph{Strength of the result}
As mentioned in Section~\ref{sec:priro-works},
Theorem~\ref{thm:cv_rates} is not the first instance of a minimax analysis of SGMs.
However, our overall approach benefits from the following features, including several that offer fresh perspectives.
\begin{itemize}[leftmargin=*]

    \item \textbf{Unification.}
    We present a unified proof that simultaneously covers arbitrary levels of smoothness as well as both SDE and ODE sampling schemes—that is, for general $\beta > 0$ and potentially time-varying diffusion coefficients $b_t \geq 0$ in~\eqref{eq:final-generator}.
    Since the regime where $\beta > 2$ and $b_t = 0$ had not been previously analyzed in combination, our result also incidentally extends the existing literature.
    
    \item \textbf{Generality of the statistical model.}
    We deal with both compactly-supported distributions \emph{and} distributions with unbounded support in the same framework (see Assumption~\ref{assum:1}). 
    In particular, the case of full-supported densities with $\beta > 2$ appears to be new to the literature.

    \item \textbf{Proof conciseness.}
    A notable advantage of our result is that the regularity of the score function is not required explicitly. 
    Instead, this regularity is naturally induced by the regularity of $p^\star$ itself (Theorem~\ref{coro:regularityp}).
    This results in a notably more concise proof, mostly coming from simplifications of approximation theory arguments (Proposition~\ref{prop:aproxsuzuki}).

    \item \textbf{Methodology and neural networks.}
    While many prior works rely on score estimators based on smoothing kernels, we analyze SGMs trained via denoising score matching (i.e empirical risk minimization of the form~\eqref{eq:scoreestimatorheuristic}). 
    Moreover, it is constructed using a class of neural networks, making it more aligned with the practical implementations of SGMs.

    \item \textbf{Transport metric.} 
    We derive bounds in the Wasserstein-1 distance $\mathrm{W}_1$, thereby capturing the geometric structure of the underlying space—an aspect not addressed by information-theoretic metrics such as the Kullback-Leibler divergence or total variation distance.
    
    \item \textbf{Weak score approximation.}
    Unlike for the total variation~\cite{li2024sharp}, we show that approximating the Jacobian of the score is not necessary when working in $\mathrm{W}_1$ distance as a weaker one-sided Lipschitness constraint is sufficient.

    \item \textbf{Nearly tight rates.}
    In addition to the above contributions, we provide convergence rates that, unlike most prior work, are minimax optimal up to $\mathrm{polylog}(n)$ factors only.

\end{itemize}

\paragraph{Limits of the result}
To highlight potential directions for future research, let us also acknowledge a few limitations of our approach.

\begin{itemize}[leftmargin=*]
    \item 
    \textbf{One-sided Lipschitz approximants.}
    The neural networks used for score approximation are constrained to be spatially one-sided Lipschitz, a condition that promotes the stability of the backward process $(\widehat{X}_t)_t$. Although enforcing this constraint in practice can be challenging, several works have proposed effective methods for imposing (even stronger versions of)  it~\citep{ducotterd2024improving,gouk2021regularisation,fazlyab2019efficient,pauli2021training}.
    
\item 
    \textbf{Time-dependency of the neural network architecture.}
    As in previous minimax estimators based on neural networks~\citep{oko2023diffusion,fukumizu2024flow}, our estimator employs different networks across time intervals to exploit the increasing regularity of the score function over time (see Section~\ref{sec:scoreapp}).
    In earlier works, the total number of networks used was of order $O(\log n)$.
    Here, we demonstrate that by allocating more neural networks at later time steps (i.e., for larger forward times $t$), we can achieve minimax rates up to logarithmic factors, rather than merely near-optimal rates.
    However, this refinement requires a total of $O(n^{\frac{2}{2\beta + d}})$ neural networks.
    Restricting the number of networks to $O(\log n)$ would still yield almost minimax rates, following a similar proof strategy.

\end{itemize}
\subsection{Organisation of the paper}

The remainder of the paper is organized as follows. Section~\ref{sec:statframe} introduces the statistical model and details the regularity properties it induces on the score function. Section~\ref{sec:two-step-analysis} presents a two-step analysis of score-based generative models based on the stability of the backward dynamic, and generalization bounds for score matching. 
In Section~\ref{sec:score-regularity-implies-score-estimation}, we detail how regularity properties of the score enables to construct estimators via neural networks, and to establish precise approximation guarantees. 
Section~\ref{sec:lemma:minimaxforsmoothscore} combines the analytical and statistical ingredients to derive the minimax convergence rate. 
Finally, Section~\ref{sec:concluandpersp} discusses the optimality of this rate and outlines directions for future work.

\section{Statistical framework}\label{sec:statframe}
In this section, we introduce the statistical model under consideration in the minimax result~(Section~\ref{sec:model-setup}). Then, we describe the regularity properties that this model induces on the score function~(Section~\ref{sec:model-properties}).
First, let us start by setting out the notations that will be used in the sequel.
\subsection{General notation}
Throughout, $d\geq 3$ stands for the ambient dimension and $p^\star:\R^d \to \R_+$ for the unknown target density. 
Symbols $\langle \cdot , \cdot \rangle$ and $\| \cdot \|$ will denote the canonical scalar product and Euclidean norm.
The closed ball with radius $r \geq $ and center $x \in \R^d$ is denoted by $\B(x,r)$.
The largest eigenvalue of a symmetric matrix $A \in \R^{d \times d}$ will be denoted by 
\begin{align*}
    \lambda_{\max}(A)
    :=
    \max_{\|v\|=1}
    \langle A v , v \rangle
    .
\end{align*}
For $\eta \geq 0$, define
$
\lfloor \eta \rfloor := \max\{k \in \mathbb{N}_0 \mid k \leq \eta\}$. 
Let $\mathcal{X} \subset \R^d$, $\mathcal{Y} \subset \R^p$ be subsets of Euclidean spaces,
and let $(f_1, \ldots, f_p) =: f : \mathcal{X} \to \mathcal{Y}$ belong to $\mathcal{C}^{\lfloor \eta \rfloor}(\mathcal{X}, \mathcal{Y})$, meaning that  it is $\lfloor \eta \rfloor$-times continuously differentiable.
For all multi-index $\nu = (\nu_1, \ldots, \nu_d) \in \mathbb{N}_0^d$ with $|\nu| := \nu_1 + \cdots + \nu_d \leq \lfloor \eta \rfloor$, denote the corresponding partial derivative of each component $f_j$ by
$
\partial^\nu f_j := \frac{\partial^{|\nu|} f_j}{\partial x_1^{\nu_1} \cdots \partial x_d^{\nu_d}}.
$
Writing
$
\|f_j\|_{\alpha} := \sup_{x \neq y} \frac{|f_j(x) - f_j(y)|}{\|x - y\|^{\alpha}}
$
,
we define
\begin{align*}
    \| f \|_{\mathcal{H}^\eta}
    :=
    \max_{1 \leq j \leq d} \Bigl\{ \sum_{0\leq |\nu| \leq \lfloor \eta \rfloor} \|\partial^\nu f_j\|_{L^\infty(\mathcal{X})} + \sum_{|\nu| = \lfloor \eta \rfloor} \|\partial^\nu f_j\|_{\eta - \lfloor \eta \rfloor} \Bigr\}.
\end{align*}
Then, for $K > 0$, define the Hölder ball of regularity $\eta$ and radius $K$ by
\[
\mathcal{H}^\eta_K(\mathcal{X}, \mathcal{Y}) := \left\{ f \in \mathcal{C}^{\lfloor \eta \rfloor}(\mathcal{X}, \mathcal{Y}) \ \middle| \ \| f \|_{\mathcal{H}^\eta} \leq K \right\}.
\]
We use $\nabla^k$ and $\partial_t^k$ to denote the $k$-th order differential operators with respect to the space variable $x \in \mathbb{R}^d$ and the time variable $t \in \mathbb{R}_+$, respectively.

For a measure $\mu$ on $\mathbb{R}^d$, we write $L^2(\mu)$ for the set of functions $f$ such that $\int f(x)^2\mu(\mathrm{d} x)<\infty$. Given $\kappa \geq 0$, a $\R^d$-valued random variable $Y$ will be said to be \emph{$\kappa$-subGaussian} 
if for all $0 < \varepsilon < 1$,
\begin{align}
    \label{eq:subgaussian}
    \mathbb{P} \left ( \|Y\| \geq \sqrt{\kappa\log(\varepsilon^{-1})} \right ) \leq \varepsilon.
\end{align}
Finally, $C,C_1,C_2, \ldots$ will denote generic constants independent of $n$ and $p^\star$, which may vary from line to line.

\subsection{Model setup}\label{sec:model-setup}
Given $\beta>0$ and $K\geq 1$, we now introduce the assumptions on the target data distribution $p^\star$.
\begin{assumption}\label{assum:1}
The probability density function $p^\star:\R^d\rightarrow \R_+$ writes as
$p^\star(x)=\exp(-u(x)+a(x))$, with $u:\R^d \to \R \cup \{\infty\}$ and $a:\R^d \to \R$.
Furthermore:
\begin{enumerate}
    \item $a \in \mathcal{H}^{\beta}_K\bigl(\mathrm{Support}(p^\star)\bigr)$ and $u\in \mathcal{C}^{1+ \max\{1,\lfloor \beta \rfloor\}}\bigl(\mathrm{Support}(p^\star)\bigr)$;
    \item $\nabla^2 u \succeq K^{-1} I_{d \times d}$;
    \item
    $\|\argmin u\| \leq K$;
    \item 
    For all $x,y \in \mathbb{R}^d$ such that $\|x-y\|\leq (K\{1 \vee |u(x)|\})^{-K}$ and $k\in \{0,...,\lfloor \beta \rfloor+1\}$, we have $\|\nabla^k u(y)\|\leq K(1+|u(x)|^K)$
    .
\end{enumerate}
\end{assumption}

Informally, Assumption~\ref{assum:1} covers \( \beta \)-Hölder densities that are bounded below on a given compact subset, and that exhibit a controlled decay at the boundary of their (necessarily convex) support.
Given $\beta>0$, $K\geq 1$, we define our statistical model as follows:
\begin{equation}\label{eq:finalstatisticalmodel}
    \mathcal{A}_K^\beta
    :=
    \big\{p^\star:\mathbb{R}^d\rightarrow \mathbb{R}\ \big|\ p^\star \text{ satisfies Assumption~\ref{assum:1} with } \beta \text{ and } K\big\}.
\end{equation}

Assumptions~\ref{assum:1}.1 and~\ref{assum:1}.2 ensure that $p^\star$ is a deformation of a log-concave measure $\exp(-u)$ by a $\beta$-smooth perturbation $a$. 
These conditions ensure that $p^\star$, although not necessarily log-concave itself, is both subGaussian and \( \beta \)-Hölder. 

Assumption~\ref{assum:1}.3 guarantees that the bulk of the probability mass of \( p \) is concentrated near the origin, which in turn ensures that under the forward process~\eqref{eq:forward-true}, mass transported towards the \emph{centered} gaussian does not need to travel long distances.

Assumption~\ref{assum:1}.4 prevents $u$ to blow up arbitrarily fast, or equivalently, $p^\star$ to decay too abruptly towards zero.
Although somewhat unconventional, this assumption is in fact quite mild and versatile. 
It is designed so as to accommodate both compactly supported and full-support distributions $p^\star$.
Given a compact convex subset \( D \subset \R^d \), it is fulfilled for potentials of the form \( u(x) = \mathrm{dist}(x, D^c)^{-R} \) with $R>0$ and \( \mathrm{dist}(\cdot, D^c) \) standing for the distance to the complement of $D$. 
It is sufficiently mild to also encompass distributions with full support $\R^d$, such as those corresponding to any strictly convex polynomial \( u \).

To showcase the versatility of Assumption~\ref{assum:1}, we note that it accommodates distributions $p^\star$ corresponding to perturbations of mixtures of non-degenerate Gaussians with modes lying within a prescribed ball.
In this case, the perturbation \( a \) can absorb the non-convex behavior of \( p \) inside the ball, while the convexity of \( u \) governs the Gaussian behavior outside.

\begin{proposition}[Variations of Gaussian mixtures belong to the model]
\label{prop:mixtures-in-model}
Let $L\in \mathbb{N}_{>0}$ and \( A \geq 1 \). Consider independent Gaussian random variables $Y_1,...,Y_L$ such that for all \( l \in \{1, \ldots, L\} \), 
\begin{itemize}[leftmargin=*]
    \item 
\( Y_l \sim \mathcal{N}(\mu_l, \Sigma_l) \),
    \item 
\( \mu_l \in \B(0,A) \), and
    \item 
    \( A^{-1} I_{d \times d}\preceq\Sigma_l^{-1} \preceq A I_{d \times d} \).
\end{itemize}
Denote by \( q \) the density of the mixture random variable \( \sum_{l=1}^L Y_l\mathds{1}_{Z=l} \), where $Z$ is an independent multinomial random variable of parameters $\{\alpha_1,...,\alpha_L\}$.

Then, there exists \( C_A > 0 \) such that for all function \( a \in \mathcal{H}^{\beta}_{C_A} \), $q e^{a}$ satisfies Assumption~\ref{assum:1}
with parameters $\beta$ and $K_{A}$ provided that $\int qe^a = 1$.
\end{proposition}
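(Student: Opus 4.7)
The plan is to exhibit an explicit decomposition $qe^a = \exp(-u + a')$ satisfying Assumption~\ref{assum:1} with parameter $K_A$ polynomial in $A$. Writing $q(x) = \sum_{l=1}^L \alpha_l g_l(x)$ with $g_l \propto \exp(-f_l)$ and $f_l(x) := \tfrac{1}{2}(x-\mu_l)^T \Sigma_l^{-1}(x-\mu_l)$, each potential $f_l$ is strongly convex with $A^{-1} I \preceq \nabla^2 f_l \preceq A I$ and attains its minimum at $\mu_l \in \B(0,A)$. The task is to absorb the non-log-concave part of $\log q$ into a Hölder-regular perturbation, while keeping the ``bulk'' of the potential inside a strongly convex function~$u$.

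First, I would construct a smooth strongly convex function $u : \R^d \to \R$ approximating $-\log q$ up to a bounded, Hölder-regular error. A natural candidate is the log-sum-exp envelope $u(x) := \log \sum_l \alpha_l e^{f_l(x)}$, whose Hessian decomposes as $\nabla^2 u(x) = \sum_l p_l(x) \Sigma_l^{-1} + \mathrm{Cov}_{l \sim p(x)}[\nabla f_l(x)] \succeq A^{-1} I$, whose minimizer lies in the convex hull of $\{\mu_l\}_{l=1}^L \subset \B(0, A)$, and whose explicit form yields the polynomial-type derivative bounds required by Condition~4 of Assumption~\ref{assum:1}. Variants (smoothing of the softmin representation of $-\log q$, or matching $-\log q$ directly outside a large ball of radius $O(A)$ and convexifying inside) would be considered if the naive log-sum-exp does not yield a bounded $u + \log q$ on its own.

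Second, granting such a $u$, I would set $a'(x) := u(x) + \log q(x) + a(x)$, so that $qe^a = e^{-u + a'}$ by construction. The analytic heart is then the uniform bound $\|u + \log q\|_{\mathcal{H}^\beta(\R^d)} \leq K_A'$ for some $K_A'$ polynomial in $A$. Granting this, $\|a'\|_{\mathcal{H}^\beta} \leq K_A' + C_A =: K_A$ verifies Condition~1 of Assumption~\ref{assum:1}, while Conditions~2--4 follow from the construction of $u$. The normalization constraint $\int qe^a = 1$ only enters to ensure that $qe^a$ is a probability density; it plays no role in the decomposition itself.

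The main obstacle is precisely this uniform Hölder bound on $u + \log q$. Expanding it as $\log \sum_{l,m} \alpha_l \alpha_m \exp(f_l - f_m)$, the pairwise differences $f_l - f_m$ grow quadratically in $x$ when covariances differ and linearly when only the means do, so a direct pointwise estimate will fail. The resolution is expected to exploit the dominant-component structure of Gaussian mixtures: outside a ball of radius polynomial in $A$, one component $g_{l^*(\hat{x})}$ dominates along each direction $\hat{x}$, and the non-diagonal contributions to the above sum are exponentially suppressed. A partition of $\R^d$ according to these dominant-component cells, a direction-wise analysis on each cell, and a compactness argument inside the ball should then yield the required uniform control, with all constants polynomial in~$A$.
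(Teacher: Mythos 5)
Your overall framing is correct: write $qe^a = e^{-u+a'}$ with $a' = u + \log q + a$ and pick a strongly convex $u$ close enough to $-\log q$ that the difference is $\mathcal{H}^\beta$-bounded. This is also the paper's decomposition. However, your primary candidate for $u$, the log-sum-exp envelope $u(x) = \log\sum_l \alpha_l e^{f_l(x)}$, provably fails, and the ``dominant-component suppression'' argument you sketch to rescue it is in fact backwards. As $\|x\|\to\infty$, $u(x) \approx \max_l f_l(x)$ (the log-sum-exp picks out the \emph{largest} quadratic), whereas $-\log q(x) \approx \min_l f_l(x) + O(1)$ (the mixture density is dominated by the Gaussian with the \emph{smallest} potential). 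Hence $u(x)+\log q(x) \approx \max_l f_l(x) - \min_l f_l(x)$, which diverges quadratically whenever the $\Sigma_l$ differ and linearly whenever only the $\mu_l$ do. Equivalently, in your expansion $\log\sum_{l,m}\alpha_l\alpha_m c_m e^{f_l-f_m}$, the off-diagonal term with $l = \arg\max f_l$, $m=\arg\min f_m$ is the \emph{dominant} one, not an exponentially suppressed one; the diagonal terms $e^{f_l-f_l}=1$ become negligible in comparison. No partition into dominant-component cells can fix this, because the two factors in $e^{u+\log q}$ are dominated by \emph{different} components of the mixture.

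The fallback you mention in passing (``matching $-\log q$ directly outside a large ball of radius $O(A)$ and convexifying inside'') is precisely the paper's route, and it is the one that works. Concretely, the paper observes that for $\|x\| > C_A$ one has $-\nabla^2\log q(x) \succeq (2A)^{-1}I_{d\times d}$, $-\langle\nabla\log q(x),x\rangle \geq (2A)^{-1}\|x\|^2$, and $-\log q(x)\geq (4A)^{-1}\|x\|^2$; it then defines $u$ to be $(4A)^{-1}\|x\|^2$ on $\B(0,C_A)$, equal to $-\log q$ on $\B(0,2C_A)^c$, and an interpolation via a smooth monotone cutoff $l(\|x\|-C_A)$ on the annulus. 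This $u$ satisfies $\nabla^2 u \succeq (4A)^{-1}I_{d\times d}$ globally, and $b := \log q + u + a$ differs from $a$ only on the compact set $\B(0,2C_A)$, where $\log q + u$ is smooth with derivatives bounded in terms of $A$; this gives Assumption~\ref{assum:1}.1--4 with $K_A$ polynomial in $A$. If you replace your log-sum-exp construction with this gluing and carry out the (routine but not stated) verification of the three outer-ball inequalities and the annulus interpolation, your proof becomes essentially the paper's.
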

The proof of Proposition~\ref{prop:mixtures-in-model} can be found in Section~\ref{sec:prop:mixtures-in-model}.

\subsection{Model properties}
\label{sec:model-properties}
Under Assumption~\ref{assum:1}, the score function $s^\star(t,x) = \nabla \log \vect{p}{t}(x)$ naturally inherits regularity from both the data distribution $\vect{p}{0} = p^\star$ along the forward (diffusive) dynamic, but also from that of its limiting distribution $\vect{p}{\infty} = \mathcal{N}(0,\sigma^2 I_{d \times d})$.

\paragraph{Boundedness of the score} We begin by establishing a general bound on the norm of the score function.

\begin{proposition}[Boundedness of the score]
\label{prop:estimatenormscore}
Suppose that $p^\star$ satisfies Assumptions~\ref{assum:1}.1,\ref{assum:1}.2 and~\ref{assum:1}.3. Then
for all $t>0$ and $x\in \mathbb{R}^d$, we have
\begin{align}\label{eq:estimatenormscore}
\|s^\star(t,x)\|
&\leq C(1+t^{-1})(1+\|x\|)
.
\tag{A}
\end{align}
\end{proposition}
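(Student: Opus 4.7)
The approach I would take rests on a Tweedie-type identity for the score. Since $\vect{X}{t}\sim e^{-t}\vect{X}{0}+\sigma_t Z$ by~\eqref{align:lawofxt}, direct differentiation under the Gaussian convolution defining $\vect{p}{t}$ yields
\begin{align*}
s^\star(t,x)
\;=\;
-\,\frac{x-e^{-t}\,\mathbb{E}\!\left[\vect{X}{0}\mid \vect{X}{t}=x\right]}{\sigma_t^2}.
\end{align*}
Since $\sigma_t^2=\sigma^2(1-e^{-2t})$ satisfies $\sigma_t^{-2}\leq C(1+t^{-1})$ for all $t>0$, the statement reduces to showing that the posterior mean $m_t(x):=\mathbb{E}[\vect{X}{0}\mid\vect{X}{t}=x]$ obeys $\|m_t(x)\|\leq C(1+\|x\|)$ uniformly in $t>0$.

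To control $m_t(x)$, I would observe that the posterior density of $\vect{X}{0}$ given $\vect{X}{t}=x$, namely $\pi_t(y\mid x)\propto \exp(-u(y)+a(y)-\|x-e^{-t}y\|^2/(2\sigma_t^2))$, is a bounded multiplicative perturbation (by $\|a\|_\infty\leq K$, from Assumption~\ref{assum:1}.1) of the tilted density $\tilde\pi_t(y\mid x)\propto\exp(-u(y)-\|x-e^{-t}y\|^2/(2\sigma_t^2))$. The latter is strongly log-concave with constant $\lambda_t:=K^{-1}+e^{-2t}/\sigma_t^2\geq K^{-1}$ by Assumption~\ref{assum:1}.2. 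Writing $\mu:=\argmin u$, which satisfies $\|\mu\|\leq K$ by Assumption~\ref{assum:1}.3 and $\nabla u(\mu)=0$, the mode $y^{\ast}_t$ of $\tilde\pi_t(\cdot\mid x)$ solves $\nabla u(y^{\ast}_t)=e^{-t}(x-e^{-t}y^{\ast}_t)/\sigma_t^2$. Pairing this with $y^{\ast}_t-\mu$ and invoking strong convexity of $u$ gives
\begin{align*}
K^{-1}\|y^{\ast}_t-\mu\|^2
\;\leq\;
\langle \nabla u(y^{\ast}_t),\,y^{\ast}_t-\mu\rangle
\;=\;
\tfrac{e^{-t}}{\sigma_t^2}\bigl\langle x-e^{-t}y^{\ast}_t,\,y^{\ast}_t-\mu\bigr\rangle,
\end{align*}
which after rearrangement yields
$
\|y^{\ast}_t-\mu\|\leq e^{-t}(K^{-1}\sigma^2(1-e^{-2t})+e^{-2t})^{-1}(\|x\|+K).
$
The key point is that the prefactor $e^{-t}/(K^{-1}\sigma^2(1-e^{-2t})+e^{-2t})$ stays bounded uniformly on $t\in(0,\infty)$, hence $\|y^{\ast}_t\|\leq C(1+\|x\|)$.

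To upgrade from mode to mean, I would use two elementary facts. First, the Brascamp--Lieb inequality applied to the strongly log-concave density $\tilde\pi_t$ gives $\mathbb{E}_{\tilde\pi_t}[\|\vect{X}{0}-y^{\ast}_t\|^2\mid\vect{X}{t}=x]\leq d/\lambda_t\leq dK$. Second, the two-sided bound $e^{-K}\leq e^{a(y)}\leq e^K$ allows one to write $m_t(x)=\mathbb{E}_{\tilde\pi_t}[\vect{X}{0}\,e^{a(\vect{X}{0})}\mid\vect{X}{t}=x]/\mathbb{E}_{\tilde\pi_t}[e^{a(\vect{X}{0})}\mid\vect{X}{t}=x]$ and conclude, via Cauchy--Schwarz, $\|m_t(x)-y^{\ast}_t\|\leq e^{2K}\sqrt{dK}$. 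Combining these two estimates, $\|m_t(x)\|\leq\|\mu\|+\|y^{\ast}_t-\mu\|+e^{2K}\sqrt{dK}\leq C(1+\|x\|)$, and substituting into the Tweedie identity closes the proof. The main delicate step is the uniform-in-$t$ control of the mode, since for extreme values of $t$ either the data-fidelity term or the prior drives the posterior; verifying boundedness of the prefactor at $t\to 0$, $t\to\infty$, and at the transition scale is the only technical subtlety, everything else reducing to routine log-concavity estimates.
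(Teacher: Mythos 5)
Your proof is correct and takes a genuinely different route from the paper's. You work directly with the Tweedie identity $s^\star(t,x) = -\sigma_t^{-2}\bigl(x - e^{-t}\,\mathbb{E}[\vect{X}{0}\mid\vect{X}{t}=x]\bigr)$ and reduce the problem to showing the posterior mean satisfies $\|m_t(x)\|\leq C(1+\|x\|)$ uniformly in $t$, which you extract from strong log-concavity of the tilted posterior $\tilde\pi_t$: a mode-stability estimate pinning $y^\ast_t$ near $\mu=\argmin u$, a first-moment bound around the mode, and a two-sided bound on the multiplicative perturbation $e^{a}$. The paper instead sets $x^\star := \argmin u$, bounds $\|s^\star(t,x^\star)\|\leq C(1+t^{-1})$ via the same posterior-mean representation, and then propagates this to general $x$ through a mean-value argument $\|s^\star(t,x)\|\leq\|\nabla s^\star(t,\cdot)\|_\infty\|x-x^\star\|+\|s^\star(t,x^\star)\|$, where $\|\nabla s^\star(t,\cdot)\|_\infty\leq C(1+t^{-1})$ is obtained by combining a lower-eigenvalue estimate from the Jacobian formula with the upper (one-sided Lipschitz) bound of Proposition~\ref{prop:lipregu}. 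Your argument is more self-contained, since it does not invoke Proposition~\ref{prop:lipregu}; the paper's version reuses the Jacobian machinery and the one-sided Lipschitz bound, which it needs elsewhere anyway. Two small remarks: first, the step you attribute to Brascamp--Lieb is really the first-moment bound around the \emph{mode}, $\mathbb{E}_{\tilde\pi_t}\|Y-y^\ast_t\|^2\leq d/\lambda_t$, an integration-by-parts fact for strongly log-concave densities (Brascamp--Lieb proper concerns variance around the mean); the conclusion is the same, so no gap. Second, your ``only technical subtlety'' about the prefactor is handled easily: for $t\leq 1$ the $e^{-2t}$ term in the denominator dominates so the prefactor is at most $e^{t}\leq e$, while for $t\geq 1$ the $K^{-1}\sigma^2(1-e^{-2t})$ term dominates and makes it $O(e^{-t})$; so it is indeed uniformly bounded.
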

The proof of Proposition~\ref{prop:estimatenormscore} is provided in Section~\ref{sec:prop:estimatenormscore}. 
This estimate is sharp for small time values $t \to 0^+$. For instance, when \( p \) is compactly supported, the backward process~\eqref{eq:backward-true} must transport any point \( x \in \R^d \) into the support of \( p \) within time \( t \), hence justifying the inverse dependence on \( t \). 
Importantly, this bound allows us to control the growth of the score as \( \|x\| \to \infty \), where no accurate approximation properties of a neural network can be expected.

\paragraph*{One-sided Lipschitzness of the score}
Additionally, the largest eigenvalue of the Jacobian of the score turns out to be uniformly integrable over time. 
In fact, in our context, the following strong quantitative upper bound holds true.

\begin{proposition}[One-sided Lipschiztness of the score --- {\cite[Theorem~1]{stephanovitch2025regularity}}]
\label{prop:lipregu}
Suppose that $p^\star$ satisfies Assumptions~\ref{assum:1}.1 and~\ref{assum:1}.2.
Then for all $t>0$ the score function $s^\star$~\eqref{eq:scfunc} associated to equation~\eqref{eq:forward-true} satisfies 
\begin{equation}\label{eq:onesidedlipscoreassum}
\sup_{x\in \mathbb{R}^d} \lambda_{\max}\Bigl(  \nabla s^\star(t,x)+\frac{1}{\sigma^2}I_{d \times d}\Bigr)\leq Ce^{-2t}(1+t^{-(1-\frac{\beta\wedge 1}{2})}).
\tag{B}
\end{equation}
\end{proposition}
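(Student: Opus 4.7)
The plan is to start from the conditional-covariance (Tweedie--Miyasawa) representation of the score Jacobian. Since $\vect{X}{t} = e^{-t}\vect{X}{0} + \sigma_t Z$ with $Z \sim \mathcal{N}(0,I_{d\times d})$ independent of $\vect{X}{0} \sim p^\star$, differentiating $\vect{p}{t}(x) = \int p^\star(y)\, g_{\sigma_t}(x - e^{-t}y)\,\mathrm{d}y$ twice and simplifying yields
\begin{align*}
\nabla s^\star(t,x) + \tfrac{1}{\sigma^2}I_{d\times d}
= \tfrac{e^{-2t}}{\sigma_t^2}\!\left(\tfrac{1}{\sigma_t^2}\,\mathrm{Cov}\bigl(\vect{X}{0}\,\big|\,\vect{X}{t}=x\bigr) - I_{d\times d}\right).
\end{align*}
Since the prefactor $\tfrac{e^{-2t}}{\sigma_t^2}$ already captures the $e^{-2t}(1+t^{-1})$ behavior, it suffices to show that $\sigma_t^{-2}\,\lambda_{\max}(\mathrm{Cov}(\vect{X}{0}\,|\,\vect{X}{t}=x)) - 1 \leq C\,t^{(\beta\wedge 1)/2}$ for $t \leq 1$, and that this quantity remains uniformly bounded for $t \geq 1$.

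The conditional density is $\rho_{t,x}(y) \propto \exp\bigl(-u(y) + a(y) - \tfrac{e^{-2t}}{2\sigma_t^2}\|y - e^t x\|^2\bigr)$. The key step is to compare it to its log-concave \emph{reference} $\widetilde{\rho}_{t,x}(y) \propto \exp\bigl(-u(y) - \tfrac{e^{-2t}}{2\sigma_t^2}\|y - e^t x\|^2\bigr)$, whose negative log-Hessian is at least $(K^{-1} + \tfrac{e^{-2t}}{\sigma_t^2})\,I_{d\times d}$ by Assumption~\ref{assum:1}.2. The Brascamp--Lieb inequality then yields
\begin{align*}
\mathrm{Cov}_{\widetilde{\rho}_{t,x}}(Y) \preceq \bigl(K^{-1} + \tfrac{e^{-2t}}{\sigma_t^2}\bigr)^{-1} I_{d\times d} = \tfrac{K\sigma_t^2}{\sigma_t^2 + K e^{-2t}}\,I_{d\times d},
\end{align*}
which saturates the expected scale $\lesssim \sigma_t^2$ for small $t$ and $\lesssim K$ for large $t$.

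The main obstacle is then to transfer this estimate to the true conditional $\rho_{t,x}$, i.e.\ to absorb the Hölder perturbation $a$. The idea is to localize: Assumption~\ref{assum:1}.3 and the Brascamp--Lieb bound force $\widetilde{\rho}_{t,x}$ (and hence $\rho_{t,x}$) to place all but an exponentially small amount of mass inside a ball $\B(y^\star_{t,x}, r_t)$ with $r_t$ of order $\sigma_t$ up to polylog factors. On such a ball, the Hölder regularity of $a$ forces its oscillation to be at most $C\,r_t^{\beta\wedge 1}$: for $\beta \leq 1$ this is direct, while for $\beta \geq 1$ one first subtracts the affine piece $a(y^\star_{t,x}) + \nabla a(y^\star_{t,x})\cdot(y - y^\star_{t,x})$, which merely shifts the mean of the Gaussian factor and is reabsorbed into a benign modification of $u$ without changing its Hessian. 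A Holley--Stroock-type argument then gives
\begin{align*}
\mathrm{Cov}_{\rho_{t,x}}(Y) \preceq e^{C r_t^{\beta\wedge 1}}\, \mathrm{Cov}_{\widetilde{\rho}_{t,x}}(Y) + \text{(negligible tail terms)}.
\end{align*}
Expanding $e^{C r_t^{\beta\wedge 1}} = 1 + O(r_t^{\beta\wedge 1})$ and dividing by $\sigma_t^2$ yields
\begin{align*}
\tfrac{1}{\sigma_t^2}\,\lambda_{\max}\!\bigl(\mathrm{Cov}_{\rho_{t,x}}(Y)\bigr) - 1 \leq C\,\sigma_t^{\beta\wedge 1}\,\mathrm{polylog}(1/\sigma_t).
\end{align*}
Multiplying by the prefactor $\tfrac{e^{-2t}}{\sigma_t^2}$ and using $\sigma_t^2 \asymp t$ for small $t$ delivers the advertised $Ce^{-2t}(1 + t^{-(1-(\beta\wedge 1)/2)})$ bound.

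The most delicate step is the uniform-in-$x$ localization: when $x$ lies in the deep tails of $\vect{p}{t}$, the effective mode $y^\star_{t,x}$ is pushed far away, and one must ensure that $u$ neither grows so fast as to concentrate $\widetilde{\rho}_{t,x}$ in pathological regions, nor so slowly as to fail concentration---this is precisely where Assumption~\ref{assum:1}.4 intervenes, guaranteeing a controlled polynomial growth of $u$ and of its higher derivatives. With this concentration in hand, the remainder of the argument amounts to gluing Brascamp--Lieb with a local Holley--Stroock estimate, and a separate (easier) analysis of the large-$t$ regime where the $e^{-2t}$ factor dominates.
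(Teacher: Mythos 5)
The paper does not contain a proof of Proposition~\ref{prop:lipregu}: it is imported verbatim from \cite[Theorem~1]{stephanovitch2025regularity}, so there is no in-text argument against which to compare your attempt. That said, your starting formula is correct and matches the conditional-covariance expression~\eqref{eq:graidentoofscore} quoted from the same reference inside the proof of Proposition~\ref{prop:estimatenormscore}, and the reduction to showing $\sigma_t^{-2}\lambda_{\max}\bigl(\mathrm{Cov}(\vect{X}{0}\,|\,\vect{X}{t}=x)\bigr)-1 \leq C\,t^{(\beta\wedge 1)/2}$ uniformly in $x$ is indeed the right target; the Brascamp--Lieb computation for the log-concave reference $\widetilde{\rho}_{t,x}$ is also correct.

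There are, however, two genuine problems. First, you invoke Assumptions~\ref{assum:1}.3 and~\ref{assum:1}.4 to localize, but the proposition only grants you 1.1 and 1.2; neither is needed, and relying on them would prove a strictly weaker statement. Once $\nabla^2 u \succeq K^{-1}I_{d \times d}$, the potential $V_{t,x}(y) := u(y) + \tfrac{e^{-2t}}{2\sigma_t^2}\|y - e^t x\|^2$ is strongly convex with constant $K^{-1} + \tfrac{e^{-2t}}{\sigma_t^2}$ \emph{uniformly in $x$}, which already gives subGaussian concentration of $\widetilde{\rho}_{t,x}$ around its own mode with radius of order $\sigma_t$ up to polylogs, no matter where that mode is pushed; there is no ``pathological region'' concern because only the lower Hessian bound is used. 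Second, and more seriously, the Holley--Stroock step is not closed. Holley--Stroock (or any $\rho\propto e^{v}\widetilde{\rho}$ perturbation bound on Poincar\'e constants or covariances) requires a \emph{global} control on $v = a - a_{\mathrm{aff}}$; the bound you quote---oscillation $O(r_t^{\beta\wedge 1})$---holds only on the concentration ball, while globally $v$ grows linearly (since $a$ is bounded by $K$ and $a_{\mathrm{aff}}$ is affine), so $\mathrm{osc}(v) = O(1)$, which yields only a constant-factor loss, not the needed $1 + O(t^{(\beta\wedge 1)/2})$. The repair---clip $v$ at $\pm C r_t^{\beta\wedge 1}$, apply the perturbation bound to the clipped measure, then control the covariance discrepancy between clipped and true laws by a subGaussian tail estimate---is exactly what you flag as the ``most delicate step'' and do not carry out, and it is precisely where the rate $t^{(\beta\wedge 1)/2}$ is earned; note moreover that a naive choice of radius $r_t = C\sigma_t\sqrt{\log(1/t)}$ leaves a $\mathrm{polylog}(1/t)$ factor that does not appear in~\eqref{eq:onesidedlipscoreassum}, so the bookkeeping there is not cosmetic. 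Until that step is quantified, the sketch is a plausible roadmap rather than a proof.
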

For large $t$, such a result is to be expected in the regime where $\nabla s^\star(t,x) \simeq \nabla s^\star(\infty,x) = -\frac{1}{\sigma^2}I_{d \times d}$. Proposition~\ref{prop:lipregu} provides an explicit non-asymptotic exponential decay uniformly over space.
As will become clear in Theorem~\ref{thm:stabilitysde}, the integrability of the right-hand side is a key property for ensuring the stability of the backward equation for arbitrary (bounded) diffusion coefficient $(b_t)_t$.
Roughly speaking, it enables the use of a Grönwall-type argument to control the Wasserstein distance between the law of the backward process and that of its approximation.

\paragraph*{Higher-order score regularity}

Leveraging the controlled decay of \( p^\star \) given by Assumption~\ref{assum:1}.4, the \( \beta \)-Hölder regularity of \( p^\star \) carries over and enhances the high-order regularity of the score function for $t>0$ on sets of large mass.
More precisely, on high-probability subsets with respect to distribution \( \vect{p}{t} \), the score function is \( \gamma \)-Hölder continuous for all \( \gamma \geq 0 \), with associated norm bounds depending explicitly on time and the regularity of \( p^\star \).

\begin{theorem}[{\cite[Corollary~3]{stephanovitch2025regularity}}]
\label{coro:regularityp}
Suppose that $p^\star$ satisfies Assumption~\ref{assum:1}.
Then for all $t> 0$ and $\varepsilon\in (0,1/4)$, the score function $s^\star$~\eqref{eq:scfunc} associated to equation~\eqref{eq:forward-true} satisfies the following.
There exists a convex subset $A_t^\varepsilon \subset \B(0,C\log(\varepsilon^{-1})^{C_2})$ with $\int_{A_t^\varepsilon} \vect{p}{t}(z) \mathrm{d}z \geq 1-\varepsilon$ such that for all $x\in A_t^\varepsilon$ and $\gamma\geq 0$,
\begin{equation}\label{eq:reguscoretime}
\Bigl\Vert s^\star(\cdot,x)+\frac{1}{\sigma^2}x\Bigr\Vert_{ \mathcal{H}^{\gamma}([t,\infty))}
\leq
C_\gamma\log(\varepsilon^{-1})^{C_2(1+\gamma)}e^{-t}\left(1+t^{-\big((\frac{1}{2}+\gamma-\frac{\beta}{2})\vee 0\big)}\right)
,
\tag{C}
\end{equation}
and for all $k\in \mathbb{N}_{\geq 0}$, 
\begin{equation}\label{eq:reguscorespace}
\Bigl\Vert
\partial_t^k\left( s^\star(t,\cdot)+\frac{1}{\sigma^2}I_{d \times d}\right)\Bigr\Vert_{\mathcal{H}^{\gamma}(A_t^\varepsilon)}\leq C_{k,\gamma}\log(\varepsilon^{-1})^{C_2(1+k+\gamma)}e^{-t}\left(1+t^{-\big((\frac{1}{2}+k+\frac{\gamma-\beta}{2})\vee 0\big)}\right).
\tag{D}
\end{equation}
\end{theorem}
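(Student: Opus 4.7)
My overall plan is to analyze $s^\star$ through the Tweedie representation, then transfer the Hölder regularity of $p^\star$ to the score via the smoothing effect of Gaussian convolution, paying an explicit $t^{-1/2}$ per extra spatial derivative demanded on top of what Assumption~\ref{assum:1} grants for free. Since $\vect{X}{t} = e^{-t}X_0 + \sigma_t Z$ with $Z \sim \mathcal{N}(0,I_{d \times d})$ independent of $X_0 \sim p^\star$ by~\eqref{align:lawofxt}, Tweedie's formula gives
\begin{align*}
s^\star(t,x) = -\frac{x}{\sigma_t^2} + \frac{e^{-t}}{\sigma_t^2}\, m(t,x),
\qquad
m(t,x) := \E\bigl[X_0 \mid \vect{X}{t} = x\bigr]
= \frac{\int x_0\, p^\star(x_0) \phi_{\sigma_t}(x - e^{-t}x_0)\, \mathrm{d}x_0}{\int p^\star(x_0) \phi_{\sigma_t}(x - e^{-t}x_0)\, \mathrm{d}x_0},
\end{align*}
where $\phi_{\sigma_t}$ is the density of $\sigma_t Z$. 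Using $\sigma_t^2 = (1-e^{-2t})\sigma^2$ one rewrites $s^\star(t,x) + x/\sigma^2 = e^{-t}(1-e^{-2t})^{-1}\sigma^{-2}\bigl(m(t,x) - e^{-t}x\bigr)$, which accounts for the global $e^{-t}$ factor in every displayed bound and reduces the task to estimating the regularity of $m(t,\cdot)$.

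Next, I would construct $A_t^\varepsilon$ as a convex set satisfying three properties simultaneously: (i) mass $\int_{A_t^\varepsilon}\vect{p}{t} \geq 1 - \varepsilon$; (ii) inclusion $A_t^\varepsilon \subset \B(0, C\log(\varepsilon^{-1})^{C_2})$; and (iii) a polylogarithmic lower bound on $\vect{p}{t}$ there. Point (ii) follows from the sub-Gaussianity of $\vect{X}{t}$, inherited from Assumptions~\ref{assum:1}.1--3 via~\eqref{align:lawofxt}. For (i) and (iii) jointly with convexity, a natural choice is a sublevel set of the convex potential $-\log f$ where $f(x) := \int e^{-u(x_0)} \phi_{\sigma_t}(x - e^{-t}x_0)\,\mathrm{d}x_0$: this $f$ is strongly log-concave by Prékopa--Leindler since $u$ is strongly convex by Assumption~\ref{assum:1}.2, and the Hölder perturbation $a$ (bounded in sup-norm by Assumption~\ref{assum:1}.1) only shifts $\vect{p}{t}$ from $f$ by a multiplicative factor $e^{\pm\|a\|_\infty}$. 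Assumption~\ref{assum:1}.4 is precisely what rules out pathological tail decay of $p^\star$ that would destroy the uniform density lower bound on this bulk set.

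The regularity bounds then reduce to a derivative-counting argument on the quotient $m(t,x) = N(t,x)/D(t,x)$. Since $D(t,x) = \vect{p}{t}(x)$ is bounded below by a polylog constant on $A_t^\varepsilon$, Faà di Bruno's formula applies to the quotient. Each spatial derivative hitting $N$ or $D$ can be routed either to $p^\star$ by integration by parts---yielding $\lfloor \beta \rfloor$ free derivatives from its $\beta$-Hölder regularity, with the remaining fractional order $\beta - \lfloor\beta\rfloor$ captured by Hölder exchange---or to $\phi_{\sigma_t}$, paying $\sigma_t^{-1} \leq Ct^{-1/2}$ per derivative. Reaching spatial $\gamma$-Hölder regularity of $s^\star$ thus costs $1 + \gamma - \beta$ non-free spatial derivatives when positive, producing the factor $t^{-(1/2 + (\gamma-\beta)/2)\vee 0}$ of~\eqref{eq:reguscorespace} in the case $k=0$. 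The Fokker--Planck identity $\partial_t \vect{p}{t} = \nabla\cdot(x\vect{p}{t}) + \sigma^2 \Delta \vect{p}{t}$ converts each time derivative into at most two spatial derivatives; hence $k$ explicit $\partial_t$'s contribute $t^{-k}$, yielding the full exponent of~\eqref{eq:reguscorespace}. For~\eqref{eq:reguscoretime}, $\gamma$-Hölder regularity in time at fixed $x$ corresponds to $2\gamma$ equivalent spatial derivatives on top of the score's own gradient, which reproduces the exponent $(1/2 + \gamma - \beta/2)\vee 0$.

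\textbf{The main obstacle} is the construction of the bulk set $A_t^\varepsilon$ with all three properties above. Without convexity, the local Faà di Bruno estimates cannot be aggregated into a global Hölder bound; without the uniform polylogarithmic lower bound on $\vect{p}{t}$, the quotient rule blows up near the boundary of $A_t^\varepsilon$. Assumption~\ref{assum:1}.4 is what enables the construction uniformly across the whole statistical model (including full-support densities with heavy tails), and carefully tracking the polylog-in-$\varepsilon^{-1}$ constants---with their explicit dependence $(1+\gamma)$ and $(1+k+\gamma)$ through the number of derivatives---is the most delicate bookkeeping in the proof.
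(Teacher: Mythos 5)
Theorem~\ref{coro:regularityp} is not proved in this paper: it is imported verbatim from \cite[Corollary~3]{stephanovitch2025regularity} and used as a black box, so there is no in-paper proof against which to check yours. Evaluating your sketch on its own terms: the Tweedie representation you start from is equivalent to the tilted-measure representation
$s^\star(t,x)=-x+\tfrac{e^{-t}}{1-e^{-2t}}\int (y-e^{-t}x)\,p^{t,x}(\mathrm{d}y)$
used elsewhere in the paper (cf.\ the proof of Proposition~\ref{prop:estimatenormscore}, citing Proposition~4 of the same reference), and your exponent bookkeeping---one free derivative budget of $\beta$, $t^{-1/2}$ per extra spatial derivative, two spatial derivatives per time derivative via Fokker--Planck---does reproduce the displayed rates in \eqref{eq:reguscoretime} and \eqref{eq:reguscorespace}.

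However, the proposal stops at a heuristic skeleton and leaves genuine gaps that prevent it from being a proof. First, the integration-by-parts step that moves $\nabla_x$ onto $p^\star$ uses $\nabla_x\phi_{\sigma_t}(x-e^{-t}x_0)=-e^t\nabla_{x_0}\phi_{\sigma_t}(x-e^{-t}x_0)$, so each routed derivative brings an uncontrolled factor $e^t$; with the single overall prefactor $e^{-t}/(1-e^{-2t})$ this blows up for $t$ bounded away from $0$ once you route more than one derivative, and your sketch never distinguishes the small-$t$ and large-$t$ regimes or explains why only the Gaussian contraction should be used for $t\gtrsim 1$. Second, the statement is a bound on Hölder \emph{seminorms} over the moving set $A_t^\varepsilon$, which requires control of \emph{differences} uniformly over $A_t^\varepsilon$, not merely pointwise derivative estimates; the Faà di Bruno / quotient argument gives the latter, and the passage to the fractional part (``the remaining fractional order $\beta-\lfloor\beta\rfloor$ captured by Hölder exchange'') is asserted rather than argued, even though it is exactly the delicate step. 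Third, the claim that $\vect{p}{t}$ is bounded below by a polylog constant on $A_t^\varepsilon$ is the crux enabling the quotient rule, yet the construction of $A_t^\varepsilon$ with all of convexity, $1-\varepsilon$ mass, polylog diameter, \emph{and} this lower bound is only sketched; Assumption~\ref{assum:1}.4 is invoked by name but the mechanism by which it yields the density floor uniformly over $\mathcal{A}^\beta_K$ is not shown. Finally, converting $\partial_t^k$ into $2k$ spatial derivatives via the Fokker--Planck identity is a parabolic-scaling heuristic: $\partial_t$ acts on $s^\star=\nabla\log\vect{p}{t}$, which is a quotient, so each time derivative also differentiates the denominator; verifying that the contributions combine to the stated exponent $t^{-(1/2+k+(\gamma-\beta)/2)\vee 0}$ rather than something worse requires a careful product-rule expansion that the proposal does not carry out.
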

Such space-time regularity estimates on the score allow us to apply classical results from approximation theory, and to construct neural network architectures of moderate size that provably approximate it well~(see Proposition~\ref{prop:aproxsuzuki}).
Compared to, for instance,~\cite[Theorem~3.1, Lemma~3.6]{oko2023diffusion}, this perspective significantly simplifies the study of the bias-variance trade-off in score matching, and, consequently, the minimax analysis of SGMs as a whole. 

\subsection{An emergent series of score regularity properties}
\label{sec:emergent-regularity-packed}

The quantitative smoothness properties of the score $s^\star(t,x)$ established in the previous section form the foundation for our statistical analysis of score-based generative models.
To fully leverage them in the minimax convergence analysis, it will in fact be crucial to obtain a slightly refined description of the high-probability sets $A_t^\varepsilon$ on which these regularity properties hold.
Indeed, for probabilistic control, the family $(A_t^\varepsilon)_{\varepsilon}$ must behave monotonically in $\varepsilon$, while for approximation purposes, the family $(A_t^\varepsilon)_t$ should remain sufficiently structured to permit uniform approximation of $s^\star(t,x)$ by neural networks.

Let us introduce a compact series of desirable \emph{properties} of the score that shall be used later on. 

\begin{assumHbeta}\phantomsection\label{assump:scoreregu}
The score function $s^\star$  satisfies properties~\eqref{eq:estimatenormscore},~\eqref{eq:onesidedlipscoreassum},~\eqref{eq:reguscoretime} and~\eqref{eq:reguscorespace} with parameter $\beta$. Furthermore, there exists $C_\star>0$ such that for all  $\varepsilon\in (0,1/4)$,  the sets $A_t^\varepsilon$ in~\eqref{eq:reguscoretime} and~\eqref{eq:reguscorespace} satisfy:
\begin{enumerate}
    \item  there exist convex subsets $A_0^\varepsilon , A_\infty^\varepsilon \subset \B(0,C\log(\varepsilon^{-1})^{C_2})$ such that $\mathbb{P}_{X \sim p^\star} (A_\infty^\varepsilon)\geq 1-\varepsilon$ and \begin{align*}
A_t^\varepsilon:= 
\begin{cases}
A_0^\varepsilon 
& \text{if } t\leq C_\star^{-1}\log(\varepsilon^{-1})^{-C_\star},
\\
A_\infty^\varepsilon 
  & \text{if }  t> C_\star^{-1}\log(\varepsilon^{-1})^{-C_\star}.
    \end{cases}
\end{align*}

\item for all $t>0$,
$$A_t^{\varepsilon} \subset A_t^{\varepsilon^2} \quad \mbox{and} \quad \inf_{x\in  A_t^\varepsilon,\ y\in \partial A_t^{\varepsilon^2}} \|x-y\|\geq  C^{-1}\log(\varepsilon^{-1})^{-C_2}.$$
\end{enumerate}
\end{assumHbeta}

Crucially, for an Ornstein-Uhlenbeck forward process with initial distributions $p^\star = \vect{p}{0}$ belonging to the statistical model $\mathcal{A}_K^\beta$, the score $s^\star$ do satisfy all the properties uniformly. 
This is precisely the statement of the following lemma.

\begin{lemma}[$p^\star \in \mathcal{A}^\beta \Rightarrow s^\star \in \mathcal{P}_\beta$]\label{lem:AbetaimpliesHbeta}
If $p^\star \in \mathcal{A}^\beta_K$, then the score function $s^\star(t,x) = \nabla \log \vect{p}{t}(x)$ from~\eqref{eq:scfunc} satisfies Properties~\hyperref[{assump:scoreregu}]{$(\mathcal{P}_\beta)$}
with constants $C,C_\star$ depending only on $d$, $\sigma$, $\beta$ and $K$.
\end{lemma}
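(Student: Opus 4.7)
The plan is to deduce Properties~\hyperref[assump:scoreregu]{$(\mathcal{P}_\beta)$} by combining the cited bounds on $s^\star$ with a careful bookkeeping of the high-probability sets $A_t^\varepsilon$. The four analytic bounds~\eqref{eq:estimatenormscore},~\eqref{eq:onesidedlipscoreassum},~\eqref{eq:reguscoretime} and~\eqref{eq:reguscorespace} are direct restatements of Propositions~\ref{prop:estimatenormscore},~\ref{prop:lipregu} and Theorem~\ref{coro:regularityp}, applied under the hypothesis $p^\star \in \mathcal{A}_K^\beta$; all hypotheses of these three results are subsumed by Assumption~\ref{assum:1}, and their constants depend only on $d,\sigma,\beta,K$ as required. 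The substantive work is therefore the structural refinement of the sets $A_t^\varepsilon$ asked for in items~1 and~2.

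First, for the two-regime structure of item~1, I would set $t_\varepsilon := C_\star^{-1}\log(\varepsilon^{-1})^{-C_\star}$ and construct $A_0^\varepsilon$ and $A_\infty^\varepsilon$ as common refinements of the sets $\widetilde{A}_t^\varepsilon$ provided by Theorem~\ref{coro:regularityp} on each of the two time windows. In the large-time regime $t \geq t_\varepsilon$, the right-hand sides of~\eqref{eq:reguscoretime}--\eqref{eq:reguscorespace} are monotone non-increasing in $t$ (they decay like $e^{-t}$), so any set working at $t = t_\varepsilon$ also validates the bounds for every later time; the additional requirement $\mathbb{P}_{X\sim p^\star}(A_\infty^\varepsilon) \geq 1-\varepsilon$ is then secured by intersecting with a $p^\star$-high-probability convex body, whose existence follows from the subGaussianity granted by Assumptions~\ref{assum:1}.1--3. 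In the small-time regime $t\leq t_\varepsilon$, the singular prefactor $t^{-((\frac{1}{2}+k+\frac{\gamma-\beta}{2})\vee 0)}$ in~\eqref{eq:reguscorespace} is largest as $t\to 0^+$, so once again the bound at $t = t_\varepsilon$ dominates the whole window, and the $\vect{p}{t}$-mass control of $A_0^\varepsilon$ follows from the $\mathrm{W}_1$-proximity of $\vect{p}{t}$ to $p^\star$ for $t\leq t_\varepsilon$.

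The main obstacle is item~2: enforcing simultaneously the monotonicity $A_t^\varepsilon \subset A_t^{\varepsilon^2}$ and a uniform boundary gap of order $\log(\varepsilon^{-1})^{-C_2}$. I would achieve both at once by fixing, on each of the two time regimes, a reference convex body $\widetilde{A}$ built in the previous step (with centre $x_0 \simeq \argmin u$) and defining the family by a homothety
\begin{align*}
A^\varepsilon := x_0 + (1-\rho(\varepsilon))\,(\widetilde{A}-x_0),
\qquad \rho(\varepsilon) := c\,\log(\varepsilon^{-1})^{-C_2}.
\end{align*}
Convexity is trivially preserved; since $\rho(\varepsilon^2)<\rho(\varepsilon)$, the sets are nested as required; the boundary separation is bounded below by $(\rho(\varepsilon)-\rho(\varepsilon^2))\cdot\mathrm{diam}(\widetilde{A})/2 \gtrsim \log(\varepsilon^{-1})^{-C_2}$; and the Hölder bounds in~\eqref{eq:reguscoretime}--\eqref{eq:reguscorespace} transfer from $\widetilde{A}$ to its sub-body $A^\varepsilon$ at no cost.

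The only remaining check is that this shrinkage does not destroy the mass bound $\vect{p}{t}(A^\varepsilon) \geq 1-\varepsilon$. Because $\vect{p}{t}$ has a density uniformly bounded on any fixed ball via the convolution representation~\eqref{align:lawofxt} combined with Assumption~\ref{assum:1}.4, the removed shell has volume $\lesssim \rho(\varepsilon)\log(\varepsilon^{-1})^{C_2 d}$ and thus $\vect{p}{t}$-mass polynomially small in $\log(\varepsilon^{-1})^{-1}$. Choosing the exponent $C_2$ in $\rho(\varepsilon)$ large enough absorbs this loss into $\varepsilon/2$ after a harmless reparameterization $\varepsilon \mapsto \varepsilon/2$. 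This is the step where all the polylogarithmic bookkeeping concentrates, but it remains a routine verification given Assumption~\ref{assum:1}.
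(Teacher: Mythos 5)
Your identification of the four analytic bounds as direct consequences of Propositions~\ref{prop:estimatenormscore},~\ref{prop:lipregu} and Theorem~\ref{coro:regularityp} matches the paper. However, your construction for items~1 and~2 of Properties~\hyperref[assump:scoreregu]{$(\mathcal{P}_\beta)$} departs from the paper's argument and, as written, has a genuine gap.

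For item~2, you fix a reference body $\widetilde{A}$ and shrink it by a homothety factor $1-\rho(\varepsilon)$ with $\rho(\varepsilon) = c\log(\varepsilon^{-1})^{-C_2}$. Two problems arise. First, if $\widetilde{A}$ is truly fixed across $\varepsilon$, then $A^\varepsilon \subset \widetilde{A}$ for every $\varepsilon$, so $\vect{p}{t}(A^\varepsilon) \geq 1-\varepsilon$ for all small $\varepsilon$ would force $\vect{p}{t}(\widetilde{A}) = 1$, which fails whenever $p^\star$ has unbounded support (explicitly allowed by Assumption~\ref{assum:1}). If instead $\widetilde{A} = \widetilde{A}^\varepsilon$ varies, the nesting $A^\varepsilon \subset A^{\varepsilon^2}$ and the boundary gap no longer follow from the homothety alone. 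Second, and more fundamentally, the claim that the removed shell's mass is ``polynomially small in $\log(\varepsilon^{-1})^{-1}$'' and can be ``absorbed into $\varepsilon/2$ by choosing $C_2$ large'' is false: for small $\varepsilon$, $\log(\varepsilon^{-1})^{-C_2} \gg \varepsilon$ for every fixed $C_2$, so a polylog shell mass can never be dominated by $\varepsilon/2$. The paper avoids this by \emph{not} shrinking: it uses the explicit sublevel-set form $A_0^\varepsilon = \{ y : u(y) \leq C(\log(\varepsilon^{-1})+K+1)^K\}$ (and, for large time, a ball $A_\infty^\varepsilon$) supplied by the cited reference. Nesting $A_t^\varepsilon \subset A_t^{\varepsilon^2}$ is then automatic since the level increases, and the boundary gap is obtained directly via a Taylor expansion of $u$ together with Assumption~\ref{assum:1}.4, which bounds $\|\nabla u\|$ in a $\log(\varepsilon^{-1})^{-C}$-neighbourhood of $A_0^\varepsilon$ and thereby shows that moving a distance $\log(\varepsilon^{-1})^{-C_2^\star}$ cannot raise the level of $u$ beyond that of $A_0^{\varepsilon^2}$.

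For item~1, your argument that ``a set working at $t = t_\varepsilon$ validates the bounds for every later time'' conflates the monotonicity of the right-hand sides of~\eqref{eq:reguscoretime}--\eqref{eq:reguscorespace} in $t$ with the set-valued statement of Theorem~\ref{coro:regularityp}. The theorem yields a (possibly $t$-dependent) set $A_t^\varepsilon$ on which the bound at time $t$ holds; nothing a priori guarantees the bound at time $t' > t$ on the set $A_{t}^\varepsilon$, nor that $\vect{p}{t'}(A_t^\varepsilon) \geq 1-\varepsilon$. The paper resolves this not by a refinement argument but by invoking the explicit two-regime form of the sets from~\cite[Section~3.1.1]{stephanovitch2025regularity}, which are already constant over each time window. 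If you wish to avoid citing the explicit form, the missing step is to show that Assumption~\ref{assum:1} guarantees such a choice is admissible — and that is precisely where the sublevel-set structure and Assumption~\ref{assum:1}.4 are used.
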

The proof of Lemma~\ref{lem:AbetaimpliesHbeta} can be found in Section~\ref{sec:lem:AbetaimpliesHbeta}, in which the precise forms of sets $A_0^\varepsilon$, $A_\infty^\varepsilon$ are given.

Most importantly, we will show in Lemma~\ref{lemma:minimaxforsmoothscore} that for the Ornstein-Uhlenbeck forward process, if the score function satisfies Properties~\hyperref[{assump:scoreregu}]{$(\mathcal{P}_\beta)$}, then score-based generative models achieve minimax convergence rates for density estimation.
This fact highlights the fundamental role of these regularity properties in guaranteeing the optimal statistical performance of SGMs.

Besides the specific framework of this article (i.e. Ornstein-Uhlenbeck forward process with initial condition in $\mathcal{A}^\beta$), we advocate that this series of score properties constitute a base coreset naturally emerging from reasonable forward diffusion schemes starting from $\beta$-smooth distributions $p^\star$. 
We also argue that this coreset is essential for studying SGMs overall.
Indeed:
\begin{enumerate}[leftmargin=*]
    \item[$\beta$.] As detailed in Section~\ref{sec:model-properties}, regularity bounds~\eqref{eq:estimatenormscore},~\eqref{eq:onesidedlipscoreassum},~\eqref{eq:reguscoretime} and~\eqref{eq:reguscorespace} naturally arise from processes \emph{essentially} convolving a $\beta$-smooth initial distribution $p^\star = \vect{p}{0}$ with a gaussian of amplitude $O(\sqrt{t})$.
    These smoothness bounds make the use of standard approximation theory results.
    \item[1.]
    Because $\vect{p}{t}$ interpolates between the two distributions $\vect{p}{0}$ and $\vect{p}{\infty}$, Property~\hyperref[{assump:scoreregu}]{$(\mathcal{P}_\beta)$}.1 encodes the fact that only two localized extremal high-probability sets $A_0^\varepsilon$ and $A_\infty^\varepsilon$ are necessary to assess the regularity of the score in~\eqref{eq:reguscoretime} and~\eqref{eq:reguscorespace}. 
    Here, the threshold $C_\star^{-1}\log(\varepsilon^{-1})^{-C_\star}$ corresponds to the time after which $s^\star(t,x)$ is essentially as smooth as $s^\star(\infty,x)$.

    \item[2.]
    Property~\hyperref[{assump:scoreregu}]{$(\mathcal{P}_\beta)$}.2 captures the fact that for fixed $t$, the high-probability sets $A_t^\varepsilon$ expand as $\varepsilon$ decreases.
    This typically will be fulfilled if $\vect{p}{0}$ and $\vect{p}{\infty}$ do not decay too fast at the boundary of their supports.
    Such a quantitative lower bound on the distance between $A_t^\varepsilon$ and the boundary of $A_t^{\varepsilon^2}$ ensures minimal growth of $(A_t^\varepsilon)_\varepsilon$. This is key for constructing globally one-sided Lipschitz neural networks (see Proposition~\ref{prop:aproxsuzuki}) and for probabilistic control of the trajectories $\vect{X}{t}$ staying within regularity zones of $s^\star(t,\cdot)$, crucial for deriving generalization bounds (see Proposition~\ref{prop:tradeoffbv}).

\end{enumerate}
For these reasons, modulo a time reparametrization of the forward process, we expect that Properties~\hyperref[{assump:scoreregu}]{$(\mathcal{P}_\beta)$} accurately capture the behavior of score functions for a broad class of generative diffusions initialized from $\beta$-smooth distributions. We therefore take them as interesting in their own right.

\section{A generic two-step analysis of generative diffusions}
\label{sec:two-step-analysis}

In this section, we will outline the key steps leading to guarantees for SGMs.
We intentionally avoid committing to a specific statistical model to maintain generality.
The analysis involves two complimentary parts: the stability of the backward equations (Section~\ref{sec:stability-of-sdes}), and the generalization accuracy of score matching for score estimation (Section~\ref{sec:oracle-for-score-matching}).

\subsection{Quantitative stability of (stochastic) differential equations}
\label{sec:stability-of-sdes}

Recalling the methodology of SGMs (Section~\ref{sec:OU-generation}), we notice that the (stochastic) differential equations governing dynamics of the true backward~\eqref{eq:backward-true} and the approximate backward~\eqref{eq:final-generator} differ only in their initial distribution and their drifts, but share the same diffusion coefficient. 
When measuring discrepancy in Wasserstein distance $\mathrm{W}_1$, this leads us to examine the marginal stability of stochastic differential equation solutions under variations in the initial distribution and drift term.
Unlike~\cite{oko2023diffusion,fukumizu2024flow,chen2023probability,gentiloni2025beyond}, which operate within an $L^2$ framework adapted to $\mathrm{W}_2$ and Kullback-Leibler, our results yield $L^1$-type stability, which arises more naturally when dealing with $\mathrm{W}_1$ distance.

In the first place, we bound the error arising from the mismatch between initial distributions $p_0 = \vect{p}{\overline{T}}$ for the true backward and $\widehat{p}_0 = \vect{p}{\infty}$ for the approximate backward.

\begin{lemma}[Stability with respect to the initial condition]
    \label{lem:SDEstability-varying-drift}
    Let $\underline{\tau} \leq \overline{\tau}$, and $p_{\underline{\tau}},\widetilde{p}_{\underline{\tau}}$ be two $L$-subGaussian probability densities on $\R^d$.
    Let $a : [\underline{\tau},\overline{\tau}] \times \R^d \to \R^d$ be a vector field such that for all $t$, $\|a_t\|_{L^\infty(\R^d)}\leq V$.
    Let $r : [\underline{\tau},\overline{\tau}] \to \R_{+}$ be a space-homogeneous diffusion coefficient such that $\|r\|_{L^2([\underline{\tau},\overline{\tau}])} \leq M$.
    Consider processes $(X_t)_{\underline{\tau} \leq t \leq \overline{\tau}}$ and $(\widetilde{X}_t)_{\underline{\tau} \leq t \leq \overline{\tau}}$ defined on $[\underline{\tau},\overline{\tau}]$, solutions to
    \begin{align*}
 \mathrm{d}X_t &= a_t(X_t) \mathrm{d}t + r_t \mathrm{d}B_t, \quad X_{\underline{\tau}} \sim p_{\underline{\tau}},
    \\
    d\widetilde{X}_t &= a_t(\widetilde{X}_t)\mathrm{d}t+r_t \mathrm{d}B_t, \quad \widetilde{X}_{\underline{\tau}} \sim \widetilde{p}_{\underline{\tau}}.
\end{align*}
Write $X_t \sim p_t$ and $\widetilde{X}_t \sim \widetilde{p}_t$. 
Then for all $0 <\varepsilon < 1$,
\begin{align*}
    \mathrm{W}_1(p_{\overline{\tau}},\widetilde{p}_{\overline{\tau}})
    &\leq
    \bigl(
    (\overline{\tau}-\underline{\tau})V+\sqrt{d}M
    +
    \sqrt{L\log(\varepsilon^{-1})}
    \bigr)
    \|p_{\underline{\tau}}-\widetilde{p}_{\underline{\tau}}\|_{L^1(\R^d)}
    +
    L\varepsilon/2
    .
\end{align*}
\end{lemma}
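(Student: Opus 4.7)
The plan is to invoke Kantorovich--Rubinstein duality, which reduces the bound to controlling $|\int f (p_{\overline{\tau}} - \widetilde{p}_{\overline{\tau}})|$ uniformly over $1$-Lipschitz test functions $f$. The key observation is that the two SDEs share \emph{exactly} the same drift and diffusion coefficient, so they generate a common Markov transition semigroup $P_{\underline{\tau},\overline{\tau}}f(x) := \mathbb{E}[f(X_{\overline{\tau}}) \mid X_{\underline{\tau}} = x] = \mathbb{E}[f(\widetilde{X}_{\overline{\tau}}) \mid \widetilde{X}_{\underline{\tau}} = x]$. The tower property then gives $\int f\,(p_{\overline{\tau}} - \widetilde{p}_{\overline{\tau}}) = \int (P_{\underline{\tau},\overline{\tau}}f)\,(p_{\underline{\tau}} - \widetilde{p}_{\underline{\tau}})$, and since the signed measure $p_{\underline{\tau}} - \widetilde{p}_{\underline{\tau}}$ has total mass zero, I can freely add and subtract $f$ to split
\[
\int f\,(p_{\overline{\tau}} - \widetilde{p}_{\overline{\tau}})
=
\int (P_{\underline{\tau},\overline{\tau}}f - f)\,(p_{\underline{\tau}} - \widetilde{p}_{\underline{\tau}})
+
\int f\,(p_{\underline{\tau}} - \widetilde{p}_{\underline{\tau}}).
\]

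To control the first (semigroup displacement) term, I would use the Duhamel form $X_{\overline{\tau}} = x + \int_{\underline{\tau}}^{\overline{\tau}} a_s(X_s)\,\mathrm{d}s + \int_{\underline{\tau}}^{\overline{\tau}} r_s\,\mathrm{d}B_s$ started at $X_{\underline{\tau}} = x$, together with $|f(X_{\overline{\tau}}) - f(x)| \leq \|X_{\overline{\tau}} - x\|$. The drift integral is deterministically bounded by $(\overline{\tau}-\underline{\tau})V$; the stochastic integral is a centered Gaussian vector in $\R^d$ whose coordinates each have variance $\int_{\underline{\tau}}^{\overline{\tau}} r_s^2\,\mathrm{d}s \leq M^2$, hence by Itô isometry and Jensen $\mathbb{E}_x\big\|\int r_s \,\mathrm{d}B_s\big\| \leq \sqrt{d}\,M$. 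This yields $\|P_{\underline{\tau},\overline{\tau}}f - f\|_\infty \leq (\overline{\tau}-\underline{\tau})V + \sqrt{d}\,M$, giving the first two contributions to the prefactor.

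For the second (initial mismatch) term, since the integral of a constant against $p_{\underline{\tau}} - \widetilde{p}_{\underline{\tau}}$ vanishes, I may assume $f(0) = 0$, so that $|f(x)| \leq \|x\|$. Setting $R := \sqrt{L \log(\varepsilon^{-1})}$ and decomposing with respect to $\B(0,R)$, the contribution inside the ball is bounded by $R\,\|p_{\underline{\tau}} - \widetilde{p}_{\underline{\tau}}\|_{L^1}$, producing the third contribution $\sqrt{L\log(\varepsilon^{-1})}$. The contribution outside is controlled by $\int_{\|x\| > R} \|x\|\,(p_{\underline{\tau}}(x) + \widetilde{p}_{\underline{\tau}}(x))\,\mathrm{d}x$, which I would estimate via the layer-cake formula and the $L$-subGaussian tail~\eqref{eq:subgaussian}, giving a term of order $L\varepsilon$ up to constants absorbable into the stated $L\varepsilon/2$.

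The main (minor) obstacle is the bookkeeping on the tail estimate: one has to check that the layer-cake integral $R\,\mathbb{P}(\|X\| > R) + \int_R^\infty \mathbb{P}(\|X\| > t)\,\mathrm{d}t$ with $\mathbb{P}(\|X\|>t) \leq e^{-t^2/L}$ indeed fits into the advertised $L\varepsilon/2$ remainder once summed over both densities. No pathwise uniqueness of the SDE is required in this approach, since everything is phrased at the level of the marginal semigroup acting on Lipschitz observables.
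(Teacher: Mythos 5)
Your argument takes essentially the same route as the paper: both exploit that the two SDEs share the same transition kernel so that, by the Kantorovich dual with $f(0)=0$ (which you package as $P_{\underline\tau,\overline\tau}f = (P_{\underline\tau,\overline\tau}f - f) + f$, while the paper bounds $|\mathbb{E}[f(X_{\overline\tau})\mid X_{\underline\tau}=x]|\leq\mathbb{E}[\|X_{\overline\tau}\|\mid X_{\underline\tau}=x]$ directly), the conditional expectation is controlled by $\|x\|+(\overline\tau-\underline\tau)V+\sqrt{d}M$ and the resulting integral against $|p_{\underline\tau}-\widetilde p_{\underline\tau}|$ is split over a ball of radius $\sqrt{L\log\varepsilon^{-1}}$ and its complement. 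Your flagged bookkeeping concern is actually warranted: the paper's step $\int_{\|x\|>R}\|x\|\,p(x)\,\mathrm{d}x = \int_R^\infty\delta\,\mathbb{P}(\|X\|\geq\delta)\,\mathrm{d}\delta$ is not an identity (the layer-cake gives $R\,\mathbb{P}(\|X\|>R)+\int_R^\infty\mathbb{P}(\|X\|>t)\,\mathrm{d}t$, which under the subGaussian tail yields a remainder of order $\varepsilon\sqrt{L\log\varepsilon^{-1}}$ rather than exactly $L\varepsilon/2$, and a further factor two from summing the two densities is dropped), so the stated $L\varepsilon/2$ should really be read as $C\varepsilon\sqrt{L(1+\log\varepsilon^{-1})}$ — a harmless slack that does not affect any downstream use of the lemma.
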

The proof of Lemma~\ref{lem:SDEstability-varying-drift} can be found in Section~\ref{sec:lem:SDEstability-varying-drift}.
In contrast to other stability bounds that require smoothness of the drift terms~\cite{backhoff2022adapted}, the assumptions on $a$ and $r$ here are particularly mild, amounting merely to boundedness of order zero.
On the other hand, the result relies on a subGaussian assumption for the initial distributions, which enables a quantitative link between the information-theoretic discrepancy $\|p_{\underline{\tau}} - \widetilde{p}_{\underline{\tau}}\|_{L^1}$ and the geometry-aware Wasserstein distance $\mathrm{W}_1$.

In the second place, we bound the drift error $a-\widehat{a} = \bigl( \sigma^2 + b^2 \bigr) (s^\star-\widehat{s})$ arising from the mismatch between the true score $s^\star$ and its estimated counterpart $\widehat{s}$ in~\eqref{eq:backward-true} and~\eqref{eq:final-generator} respectively.
We first deal with the most general case with no constraint on the diffusion coefficient.

\begin{theorem}[Stability with respect to the drift --- general case]
\label{thm:stabilitysde}
    Let $\underline{\tau} \leq \overline{\tau}$, and $p_{\underline{\tau}}$ be a probability density on $\R^d$.
    Let $a,\bar{a} : [\underline{\tau},\overline{\tau}] \times \R^d \to \R^d$ be vector fields. Assume that for all $t$, $\bar{a}_t$ is locally Lipschitz and $\lambda_{\max}(\nabla \bar{a}_t) \leq \bar{L}_t < \infty$.
    Let $r : [\underline{\tau},\overline{\tau}] \to \R_{+}$ be a space-homogeneous diffusion coefficient.
    Consider processes $(X_t)_{\underline{\tau} \leq t \leq \overline{\tau}}$ and $(\bar{X}_t)_{\underline{\tau} \leq t \leq \overline{\tau}}$ defined on $[\underline{\tau},\overline{\tau}]$, solutions to
\begin{align*}
 \mathrm{d}X_t &= a_t(X_t) \mathrm{d}t+r_t \mathrm{d}B_t, \quad X_{\underline{\tau}} \sim p_{\underline{\tau}},
    \\
    d\bar{X}_t &= \bar{a}_t(\bar{X}_t) \mathrm{d}t+r_t \mathrm{d}B_t, \quad \bar{X}_{\underline{\tau}} \sim p_{\underline{\tau}}.
\end{align*}
Write $X_t \sim p_t$ and $\bar{X}_t \sim \bar{p}_t$. Then,
\begin{align*}
    \mathrm{W}_1(p_{\overline{\tau}},\bar{p}_{\overline{\tau}})
    &\leq
    \int_{\underline{\tau}}^{\overline{\tau}}
    \int_{\R^d}
    e^{\int_t^{\overline{\tau}} \bar{L}_u \mathrm{d} u}
    \|a_t(x) - \bar{a}_t(x)\|
p_t(x)\mathrm{d}x \mathrm{d} t.
\end{align*}
\end{theorem}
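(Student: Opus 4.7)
The plan is to use a \emph{synchronous coupling}. Since $r_t$ is space-homogeneous and the two processes share the initial law, I would drive both SDEs with the same Brownian motion $(B_t)$ and start them at the same point $X_{\underline\tau}=\bar X_{\underline\tau}\sim p_{\underline\tau}$. Then $(X_t,\bar X_t)$ is a coupling of $p_t$ and $\bar p_t$, and $\Delta_t := X_t - \bar X_t$ evolves according to the pathwise ODE
$$
\mathrm{d}\Delta_t \;=\; \bigl(a_t(X_t)-\bar a_t(\bar X_t)\bigr)\,\mathrm{d}t,
$$
because the It\^o terms cancel exactly. Since $\mathrm{W}_1(p_{\overline\tau},\bar p_{\overline\tau})\leq \mathbb{E}\|\Delta_{\overline\tau}\|$ by the Kantorovich dual, it suffices to bound $\mathbb{E}\|\Delta_{\overline\tau}\|$.

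To do so, I would perform the algebraic splitting
$$
a_t(X_t)-\bar a_t(\bar X_t)\;=\;\bigl(a_t(X_t)-\bar a_t(X_t)\bigr)\;+\;\bigl(\bar a_t(X_t)-\bar a_t(\bar X_t)\bigr),
$$
so that the first summand is precisely the drift mismatch evaluated along $X_t$, and the second summand involves only $\bar a_t$. Rewriting the second summand as $\int_0^1 \nabla\bar a_t(\bar X_t+s\Delta_t)\Delta_t\,\mathrm{d}s$ and invoking the assumption $\lambda_{\max}(\nabla\bar a_t)\leq\bar L_t$ gives the one-sided contraction
$\langle \Delta_t,\bar a_t(X_t)-\bar a_t(\bar X_t)\rangle \leq \bar L_t \|\Delta_t\|^2$. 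Combining it with Cauchy--Schwarz on the first summand, the identity $\tfrac{\mathrm{d}}{\mathrm{d}t}\|\Delta_t\|^2=2\langle\Delta_t,\dot\Delta_t\rangle$ leads to the differential inequality
$$
\frac{\mathrm{d}}{\mathrm{d}t}\|\Delta_t\|\;\leq\; \|a_t(X_t)-\bar a_t(X_t)\| \;+\; \bar L_t\|\Delta_t\|,
$$
which holds almost everywhere in $t$ (the usual regularization $\sqrt{\|\Delta_t\|^2+\varepsilon}$ justifies the passage from $\|\Delta_t\|^2$ to $\|\Delta_t\|$ at the zeros of $\Delta_t$).

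Starting from $\Delta_{\underline\tau}=0$, a scalar Gr\"onwall argument then yields the pathwise bound
$$
\|\Delta_{\overline\tau}\|\;\leq\;\int_{\underline\tau}^{\overline\tau} e^{\int_t^{\overline\tau}\bar L_u\,\mathrm{d}u}\,\|a_t(X_t)-\bar a_t(X_t)\|\,\mathrm{d}t,
$$
and taking expectations--using Fubini--Tonelli since the exponential factor is deterministic, together with $X_t\sim p_t$--delivers exactly the claimed estimate.

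The main technical obstacle I anticipate is \emph{not} the Gr\"onwall estimate itself (which is first-principle), but the construction of the coupled pair $(X_t,\bar X_t)$: only $\bar a_t$ is assumed one-sided Lipschitz, so strong well-posedness of the $X$-SDE must be handled carefully. The standard fix is to approximate $a$ by mollified drifts $a^{(n)}$ along which strong solutions exist, to pass to the limit using the dissipative bound on $\bar a_t$ (which makes the map $a\mapsto (\bar X_t)_t$ stable), and to check tightness of the couplings. Once existence is secured, all the remaining steps reduce to the computation above.
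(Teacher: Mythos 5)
Your argument is correct, but it takes a genuinely different path from the paper's. Both proofs start identically: synchronous coupling $X_{\underline\tau}=\bar X_{\underline\tau}\sim p_{\underline\tau}$ with the same Brownian motion, and the reduction $\mathrm{W}_1(p_{\overline\tau},\bar p_{\overline\tau})\leq\mathbb{E}\|X_{\overline\tau}-\bar X_{\overline\tau}\|$. From there the paper invokes the stochastic It\^o--Alekseev--Gr\"obner formula to write $\bar X_{\overline\tau}-X_{\overline\tau}=\int_{\underline\tau}^{\overline\tau}\nabla\bar X_{s,\overline\tau}(X_s)\bigl(\bar a_s(X_s)-a_s(X_s)\bigr)\,\mathrm{d}s$, where $\bar X_{s,\overline\tau}$ is the $\bar a$-flow, and then controls the Jacobian $\nabla\bar X_{s,\overline\tau}$ via a Gr\"onwall argument exploiting $\lambda_{\max}(\nabla\bar a_t)\leq\bar L_t$. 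You instead split $a_t(X_t)-\bar a_t(\bar X_t)$ at the common argument $X_t$, absorb the $\bar a$-increment by the one-sided Lipschitz bound (via the fundamental theorem of calculus along the segment), and run Gr\"onwall directly on the scalar $\|\Delta_t\|$. The two arguments yield the identical estimate, but yours is more elementary and self-contained: it avoids citing the flow-derivative formula, using only the definition of $\lambda_{\max}$ as a quadratic-form bound plus a textbook Gr\"onwall. The paper's route isolates the linearized $\bar a$-flow as the object being controlled, which is conceptually aligned with its later variational use and would generalize more readily if the diffusion coefficients differed between the two SDEs; in the present setting where the diffusions cancel exactly, nothing is gained from the heavier machinery. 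Your concluding caveat about strong well-posedness of the $X$-SDE is valid, but note that the paper's proof has the same unstated requirement, since the synchronous coupling and the Alekseev--Gr\"obner representation likewise presuppose a joint strong realization of both processes on a common probability space.
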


The proof of Theorem~\ref{thm:stabilitysde} can be found in Section~\ref{sec:thm:stabilitysde}.
Note the asymmetry between $a,p$ and $\bar{a},\bar{p}$ in the assumptions and statement of the result.
Namely, the bound involves error integration against $p_t$ (interpreted as the true marginal), with an additional exponential weight depending on the largest eigenvalue of the Jacobian of $\bar{a}_t$ (interpreted as an approximate drift).
Although this integrated exponential weighting term might seem harmful, recall that Proposition~\ref{prop:lipregu} ensures that the true score function $s^\star$ satisfies
$$
\int_{0}^{\infty}
\sup_{x \in \R^d} 
\lambda_{\max} (\nabla s^\star(u,x)+\frac{1}{\sigma^2}I_{d \times d})\mathrm{d}u\leq C < \infty 
.
$$
Therefore, one may reasonably expect to construct an approximation of $s^\star$ that preserves this quantitative one-sided Lipschitz property without introducing additional statistical limitations.

\subsection{Quantitative oracle bounds for score matching}
\label{sec:oracle-for-score-matching}

To complement the analytical stability results of the previous section, we now turn to the statistical generalization properties of the denoising score matching estimator~\eqref{eq:scoreestimatorheuristic}. As is common in non-asymptotic analyses of empirical risk minimization, the result takes the form of a general oracle inequality in expectation.

With transparent notation, we denote by $\restr{\mathcal{S}}{[\underline{\tau},\overline{\tau}]}$ the set of time-restrictions $s: [\underline{\tau},\overline{\tau}] \times \R^d \to \R^d$ of functions $s \in \mathcal{S}$ possibly defined over a larger domain.
The quantity $\mathcal{N}(\mathfrak{S},\|\cdot \|,\delta)$ denotes the covering number at scale $\delta>0$ of the set $\mathfrak{S}$ with respect to the norm $\|\cdot\|$.

\begin{proposition}[Oracle inequality for denoising score matching]
\label{prop:tradeoffbv}
Assume that $p^\star$ is $K$-subGaussian and that $s^\star$ satisfies Properties~\hyperref[{assump:scoreregu}]{$(\mathcal{P}_\beta)$}.
Let $X^{(1)}, \ldots, X^{(n)}$ be a i.i.d. sample from $p^\star(x) \mathrm{d} x$, and assume that $\sigma >0$. 
Let $\widehat{s}$ be an empirical risk minimizer of the denoising score matching loss~\eqref{eq:scoreestimatorheuristic}
\begin{align*}
    \widehat{s}\in \argmin_{s\in \mathcal{S}} \frac{1}{n} \sum_{i=1}^n
    \gamma_{[\underline{\tau},\overline{\tau}]}(s,X^{(i)}),
\end{align*}
for $n^{-1} \leq \underline{\tau} < \overline{\tau}$, over a class $\mathcal{S}$ of functions $s:\R_+ \times \R^d \to \R^d$ such that $\sup_{t\in [\underline{\tau},\overline{\tau}]} \|s(t,\cdot)\|_\infty \leq V$.
Then,
\begin{align*}
    \mathbb{E}\Bigg[
     \int_{\underline{\tau}}^{\overline{\tau}}\int_{\R^d} 
     \| \widehat{s}(t,x) - s^\star(t,x) \|^2 
     \vect{p}{t}(x)  \mathrm{d}x \mathrm{d}t
    \Bigg]
    &\leq 
    3 \inf_{s \in \mathcal{S}} \int_{\underline{\tau}}^{\overline{\tau}} \int_{\R^d} \|s(t,x) - s^\star(t,x)\|^2 \vect{p}{t} (x)  \mathrm{d}x \mathrm{d}t  \\
    &
    \hspace{1em}
    + 
    \frac{c_n}{n} \log\Big( \mathcal{N}\big(\restr{\mathcal{S}}{[\underline{\tau},\overline{\tau}]}, \|\cdot\|_{\infty}, n^{-1}\bigr)\vee n \Bigr) ,
\end{align*}
where $c_n := C (\overline{\tau}-\underline{\tau})\log(n)^C g(\underline{\tau},\sigma,V)$ and  $g(\underline{\tau},\sigma,V) :=  \left ( 1 + V^2 + (1+\sigma+\sigma^{-1})^4 (1+\underline{\tau}^{-1}) + \frac{1}{\sigma^2(1-e^{-2 \underline{\tau}})} \right )$.
\end{proposition}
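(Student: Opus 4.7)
The plan is to apply a standard oracle-inequality/empirical-process strategy tailored to the denoising score matching contrast. Setting $f_s(x) := \gamma_{[\underline{\tau},\overline{\tau}]}(s,x) - \gamma_{[\underline{\tau},\overline{\tau}]}(s^\star,x)$, Theorem~\ref{thm:Vincent} applied at every $t \in [\underline{\tau},\overline{\tau}]$ ensures that the $(p^\star,\sigma_t,d)$-dependent constants cancel, so that $\mathbb{E}[f_s(X)] = R(s)$, where $R(s)$ denotes the Fisher risk appearing on the left-hand side of the claim. Letting $s^0 \in \argmin_{s \in \mathcal{S}} R(s)$ and using the ERM inequality $\widehat{\mathbb{E}}_n[f_{\widehat{s}}] \leq \widehat{\mathbb{E}}_n[f_{s^0}]$, the usual decomposition $R(\widehat{s}) \leq R(s^0) + (\mathbb{E}-\widehat{\mathbb{E}}_n)[f_{\widehat{s}}] + (\widehat{\mathbb{E}}_n-\mathbb{E})[f_{s^0}]$ reduces the problem to uniformly controlling the empirical process on $\{f_s \mid s \in \mathcal{S}\}$.

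The key technical step is a \emph{Bernstein-type variance-to-mean inequality} $\mathrm{Var}(f_s(X)) \leq M\,R(s)$ together with a high-probability uniform bound $|f_s(X)| \leq B$, for $M,B$ both $\lesssim c_n$ (up to the $1/n$ normalization). I would obtain these by writing, with $Y_t := e^{-t}X + \sigma_t Z$ and $\xi_t := Z/\sigma_t + s^\star(t,Y_t)$, the pointwise expansion $\|s(t,Y_t)+Z/\sigma_t\|^2 - \|s^\star(t,Y_t)+Z/\sigma_t\|^2 = \|(s-s^\star)(t,Y_t)\|^2 + 2\langle (s-s^\star)(t,Y_t),\,\xi_t\rangle$ and noting that $\mathbb{E}[\xi_t \mid Y_t] = 0$ (Tweedie's identity, i.e. the very content of Theorem~\ref{thm:Vincent}). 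Cauchy--Schwarz in time controls $f_s(X)^2$ by $(\overline{\tau}-\underline{\tau})\int \mathbb{E}_Z[\cdots]^2\,\mathrm{d}t$, and the squared integrand is in turn bounded by a multiple of $\|(s-s^\star)(t,Y_t)\|^2$ after invoking $\|s\|_\infty \leq V$, the bound $\|s^\star(t,\cdot)\| \lesssim (1+t^{-1})(1+\|\cdot\|)$ from~\eqref{eq:estimatenormscore}, the Gaussian tail of $Z/\sigma_t$, and the $K$-subGaussian tail of $X$ (to truncate $\|X\|,\|Z\|$ at the $\sqrt{\log n}$ scale). This is where each summand of $g(\underline{\tau},\sigma,V)$ enters: the $1/\bigl(\sigma^2(1-e^{-2\underline{\tau}})\bigr)$ term originates from $\|Z/\sigma_{\underline{\tau}}\|^2$; the $(1+\sigma+\sigma^{-1})^4(1+\underline{\tau}^{-1})$ factor from the squared bound on $s^\star$; and $V^2$ from the squared bound on $s$.

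Equipped with the variance-to-mean and boundedness estimates, I would then apply Bernstein's inequality elementwise to an $n^{-1}$-covering $s_1,\dots,s_N$ of $\restr{\mathcal{S}}{[\underline{\tau},\overline{\tau}]}$ in $\|\cdot\|_\infty$, with $\log N = \log \mathcal{N}\bigl(\restr{\mathcal{S}}{[\underline{\tau},\overline{\tau}]},\|\cdot\|_\infty,n^{-1}\bigr)$. Picking deviation level $\delta = n^{-1}$ and invoking $2\sqrt{ab} \leq a+b$, Bernstein yields, simultaneously on every covering point and on an event of probability at least $1-n^{-1}$, the bound $|\mathbb{E}[f_{s_j}] - \widehat{\mathbb{E}}_n[f_{s_j}]| \leq R(s_j)/2 + c\,c_n\,\log(N\vee n)/n$. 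A Lipschitz argument on $s \mapsto f_s$ (with $\|\cdot\|_\infty$-Lipschitz constant $O(V+\sigma_{\underline{\tau}}^{-1}\sqrt{d})$, so that the covering scale $n^{-1}$ produces only a lower-order additive error) extends the bound to all $s\in\mathcal{S}$. Applying it at both $\widehat{s}$ and $s^0$ in the decomposition above gives, on this event, $R(\widehat{s}) \leq R(s^0) + R(s^0)/2 + R(\widehat{s})/2 + c\,c_n\,\log(N\vee n)/n$, which rearranges to $R(\widehat{s}) \leq 3\,R(s^0) + c\,c_n\,\log(N\vee n)/n$. Integrating this high-probability bound over the $n^{-1}$-probability failure event contributes only a lower-order term, using the deterministic bound $|f_s| \lesssim B$ on the truncation zone and crude Gaussian moment bounds on its complement.

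The main obstacle I anticipate is the careful bookkeeping for $M$ and $B$: the denoising integrand $\|s(t,\cdot) + Z/\sigma_t\|^2$ is neither bounded nor sub-Gaussian without truncation, so one must explicitly peel off the low-probability zones on which $\|X\|$ or $\|Z\|$ exceed $O(\sqrt{\log n})$, crucially using both the $K$-subGaussian hypothesis on $p^\star$ and Property~\eqref{eq:estimatenormscore} to uniformly bound $s^\star$ on the truncation zone. A secondary subtlety is guaranteeing that the Lipschitz passage from the cover to $\mathcal{S}$ does not inflate the leading constant $3$; this is what dictates the choice of the covering bandwidth $n^{-1}$ rather than a coarser one.
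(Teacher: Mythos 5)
Your proposal is correct and follows essentially the same route as the paper: the same ERM decomposition, a variance-to-mean (Bernstein-condition) bound for the excess contrast obtained from the denoising-trick algebra plus truncation over a high-probability ball, a Bernstein-plus-covering union bound, and absorption of the failure-event contribution. The only difference is organizational — the paper factors the argument into a generic clipped ERM deviation lemma (Proposition~\ref{prop:Bernstein_union}) and a problem-specific sup-norm/variance lemma (Lemma~\ref{lem:supnorm_var_contrast}), whereas you carry the same ingredients inline.
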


The proof of Proposition~\ref{prop:tradeoffbv} can be found in Section~\ref{sec:prop:tradeoffbv}, where is also established a high probability deviation bound. 
This result is similar to~\cite[Theorem~4.3]{oko2023diffusion},
while circumventing the critical issue pointed out
by~\cite{yakovlev2025generalization}.
Its proof builds upon standard Bernstein-type concentration arguments.
Since the contrast function $\gamma(s,x)$ is unbounded, we use a clipping strategy. 
The subGaussianity of the underlying distribution $p^\star$ enables us to control the effect of this clipping, here limited to $\mathrm{polylog}(n)$ factors only.

Recall that the Proposition~\ref{prop:tradeoffbv} applies to an Orstein-Uhlenbeck forward process~\eqref{eq:forward-true}.
Indeed, this forward process allows for the denoising score trick (Theorem~\ref{thm:Vincent}), which grounds the empirical risk minimization strategy.
Hence, this result can then be adapted to other choices of forward processes, provided that a similar score trick holds. 

\section{From score regularity to guarantees for score matching
}
\label{sec:score-regularity-implies-score-estimation}

With all the analytical and statistical groundwork now in place, this section establishes that the subGaussianity of $p^\star$ combined with the smoothness Properties~\hyperref[{assump:scoreregu}]{$(\mathcal{P}_\beta)$} of the score function suffice for denoising score matching to be provably successful over a suitably chosen class of neural networks. 
We defer the precise construction of the neural networks to Section~\ref{sec:scoreapp}.

The following lemma shows that if the score satisfies Properties~\hyperref[{assump:scoreregu}]{$(\mathcal{P}_\beta)$}, then score-based generative models can achieve minimax convergence rates up to logarithmic factors.

\begin{lemma}[Score smoothness implies estimation rates for score matching]
\label{lemma:minimaxforsmoothscore}
Let $X^{(1)}, \ldots, X^{(n)}$ be a i.i.d. sample with density $p^\star:\mathbb{R}^d\rightarrow \mathbb{R}_+$.
Write $\widehat{p}$ for the distribution of $\widehat{X}_{\overline{T}-\underline{T}}$, obtained from the backward dynamic~\eqref{eq:final-generator} with $\|b\|_{L^\infty([0,\overline{T}-\underline{T}])} \leq C$, $C^{-1} \leq \sigma \leq C$, and with a score estimated via denoising score matching~\eqref{eq:scoreestimatorheuristic} over class $\mathcal{S}$ on time interval $[\underline{T},\overline{T}]$.

Assume that $p^\star(x) \mathrm{d}x$ is $K$-subGaussian~\eqref{eq:subgaussian}, and that the score function $s^\star$~\eqref{eq:scfunc} associated to $p^\star$ satisfies 
Properties~\hyperref[{assump:scoreregu}]{$(\mathcal{P}_\beta)$} with $\beta >0$.
Then there exists 
$(C_2n)^{-\frac{2(\beta+1)}{2\beta+d}} \leq  \underline{T} \leq \overline{T} \leq C_2 \log n$, and a class of neural networks $\mathcal{S}$ with $\tanh$ activation function (see Section~\ref{sec:scoreapp}), such that
    \begin{align*}
     \mathbb{E}[\mathrm{W}_1(p^\star,\widehat{p})] \leq C_3  \log(n)^{C_3} n^{-\frac{\beta+1}{2\beta+d}}
     .
\end{align*}
\end{lemma}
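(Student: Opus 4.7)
The plan is to expand $\mathrm{W}_1(p^\star,\widehat{p})$ into the three bias terms foreshadowed after~\eqref{eq:final-generator}: an \emph{early-stopping bias} $\mathrm{W}_1(p^\star,\vect{p}{\underline{T}})$ since $\widehat{p}$ samples the forward process at time $\underline{T}>0$ rather than $0$; an \emph{initial-condition bias} coming from starting the approximate backward at $\vect{p}{\infty}$ instead of $\vect{p}{\overline{T}}$; and a \emph{drift bias} from replacing $s^\star$ by the neural-network estimator $\widehat{s}$. Introducing the intermediate process $(\bar{X}_t)$ that runs the true backward drift $a_t = x+(\sigma^2+b_t^2)s^\star(\overline{T}-t,x)$ but is initialized from $\vect{p}{\infty}$, the triangle inequality gives
\begin{align*}
\mathrm{W}_1(p^\star,\widehat{p})
\,\leq\,
\mathrm{W}_1(p^\star,\vect{p}{\underline{T}})
+\mathrm{W}_1(\vect{p}{\underline{T}},\bar{p}_{\overline{T}-\underline{T}})
+\mathrm{W}_1(\bar{p}_{\overline{T}-\underline{T}},\widehat{p}).
\end{align*}

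\textbf{Easy bias terms.} The first term I would bound using the explicit representation~\eqref{align:lawofxt}: for the coupling $(\vect{X}{0},\vect{X}{\underline{T}})$ with $\vect{X}{\underline{T}}=e^{-\underline{T}}\vect{X}{0}+\sigma_{\underline{T}}Z$, the $K$-subGaussianity of $p^\star$ and $\sigma_{\underline{T}}\asymp\sigma\sqrt{\underline{T}}$ yield $\mathrm{W}_1(p^\star,\vect{p}{\underline{T}})\lesssim\sqrt{\underline{T}}$, up to polylog factors in $K$. For the second term, I would apply Lemma~\ref{lem:SDEstability-varying-drift} on $[0,\overline{T}-\underline{T}]$ with the common true drift: subGaussianity of $\vect{p}{\overline{T}}$ and $\vect{p}{\infty}$ is uniform, the drift bound $V$ follows from~\eqref{eq:estimatenormscore} restricted to a high-probability ball, and exponential ergodicity of the Ornstein--Uhlenbeck process yields $\|\vect{p}{\overline{T}}-\vect{p}{\infty}\|_{L^1}\lesssim e^{-\overline{T}}$. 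Choosing $\overline{T}\asymp \log n$ makes this term negligible.

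\textbf{Drift bias --- the core of the argument.} For the third term I would invoke Theorem~\ref{thm:stabilitysde}:
\begin{align*}
\mathrm{W}_1(\bar{p}_{\overline{T}-\underline{T}},\widehat{p})
\leq \int_0^{\overline{T}-\underline{T}}\!\!\int_{\R^d}
e^{\int_t^{\overline{T}-\underline{T}}\bar{L}_u\mathrm{d}u}
(\sigma^2+b_t^2)\|s^\star(\overline{T}-t,x)-\widehat{s}(\overline{T}-t,x)\|\,p_t(x)\mathrm{d}x\,\mathrm{d}t.
\end{align*}
Crucially, I would constrain the neural-network class $\mathcal{S}$ (via the architecture of Proposition~\ref{prop:aproxsuzuki}) so that every $\widehat{s}\in\mathcal{S}$ satisfies the same one-sided Lipschitz profile~\eqref{eq:onesidedlipscoreassum} as the true score. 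By Property~\hyperref[{assump:scoreregu}]{$(\mathcal{P}_\beta)$}, $\int_0^\infty \sup_x\lambda_{\max}(\nabla s^\star+\sigma^{-2}I)\,\mathrm{d}u<\infty$, so the exponential weight is a constant. A Cauchy--Schwarz step then reduces the remaining integral to $\sqrt{(\overline{T}-\underline{T})\,\mathcal{E}_{\underline{T},\overline{T}}(\widehat{s})}$, up to the uniform factor $\sigma^2+\|b\|_\infty^2$.

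\textbf{From oracle inequality to minimax rate.} To control the Fisher loss, I would combine Proposition~\ref{prop:tradeoffbv} with the neural-network approximation result of Section~\ref{sec:scoreapp} (Proposition~\ref{prop:aproxsuzuki}), which leverages~\eqref{eq:reguscoretime}--\eqref{eq:reguscorespace} on the high-probability sets $A_t^\varepsilon$ and the growth bound~\eqref{eq:estimatenormscore} outside. This yields, for networks of tunable size $N$, an approximation error of order $N^{-2\beta/d}$ (with $\underline{T}^{-\text{const}}$ factors from the blow-up of the Hölder norm at $t\to 0$) and a statistical error of order $N\log(n)^C/n$ from the covering-number bound. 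Balancing gives $N\asymp n^{d/(2\beta+d)}$ and Fisher loss of order $n^{-2\beta/(2\beta+d)}$, hence a drift contribution of order $n^{-\beta/(2\beta+d)}\cdot\underline{T}^{-\alpha}$. Finally I would tune $\underline{T}\asymp n^{-2(\beta+1)/(2\beta+d)}$, chosen so that the early-stopping term $\sqrt{\underline{T}}$ matches the drift contribution once the $\underline{T}$-dependent constants are absorbed; this is where the exponent $(\beta+1)/(2\beta+d)$ emerges.

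\textbf{Main obstacle.} The delicate step is the $t\to 0^+$ regime, where both $\|s^\star\|$ and $\|s^\star\|_{\mathcal{H}^\gamma}$ blow up as negative powers of $t$ via~\eqref{eq:estimatenormscore} and~\eqref{eq:reguscorespace}, while the one-sided Lipschitz bound~\eqref{eq:onesidedlipscoreassum} only decays like $e^{-2t}(1+t^{-(1-(\beta\wedge 1)/2)})$. The neural-network class must simultaneously (i) approximate $s^\star$ to the right accuracy on the $\varepsilon$-most-probable set $A_t^{\varepsilon}$ with $\varepsilon$ chosen polynomially small, (ii) keep the Jacobian upper-bounded globally so that the exponential weight in Theorem~\ref{thm:stabilitysde} remains $O(1)$, and (iii) stay uniformly bounded by $V=\mathrm{polylog}(n)$ so that the covering-number and $L^\infty$ terms in Proposition~\ref{prop:tradeoffbv} contribute only log factors. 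Property~\hyperref[{assump:scoreregu}]{$(\mathcal{P}_\beta)$}.2, in particular the lower bound on $\mathrm{dist}(A_t^\varepsilon,\partial A_t^{\varepsilon^2})$, is what makes this simultaneous control possible through the architecture of Proposition~\ref{prop:aproxsuzuki}; carrying all these constraints through the bias--variance trade-off is the main technical work.
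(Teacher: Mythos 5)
Your skeleton matches the paper's: decompose $\mathrm{W}_1(p^\star,\widehat{p})$ into early-stopping, initial-condition, and drift biases; bound the first two via the forward representation~\eqref{align:lawofxt} and Lemma~\ref{lem:SDEstability-varying-drift}; and control the drift bias through Theorem~\ref{thm:stabilitysde}, Cauchy--Schwarz, the oracle inequality (Proposition~\ref{prop:tradeoffbv}) and the approximation result (Proposition~\ref{prop:aproxsuzuki}). One non-cosmetic difference is your intermediate process: you take $(\bar X_t)$ with the \emph{true} drift started at $\vect{p}{\infty}$, whereas the paper takes the \emph{estimated} drift started at $\vect{p}{\overline{T}}$ and then telescopes the drift swap one time sub-interval at a time. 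The paper's version has two advantages you do not address: Lemma~\ref{lem:SDEstability-varying-drift} needs the common drift to be uniformly bounded, and the estimated drift is (by design of $\mathcal{S}$) while the true drift grows linearly in $\|x\|$; and each telescoping increment has, on the active sub-interval, marginal exactly $\vect{p}{\overline{T}-t}$, so the drift error from Theorem~\ref{thm:stabilitysde} is integrated against precisely the measure appearing in the Fisher loss and in Proposition~\ref{prop:tradeoffbv}. In your version the drift error is integrated against the marginal of $(\bar X_t)$, which is only approximately $\vect{p}{\overline{T}-t}$, and an extra comparison step would be needed.

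The genuine gap is quantitative. You claim that for network size $N$ the squared approximation error is $N^{-2\beta/d}$, balance with $N/n$ to get a Fisher loss of order $n^{-2\beta/(2\beta+d)}$, hence a drift contribution of order $n^{-\beta/(2\beta+d)}\cdot\underline{T}^{-\alpha}$, and then ``tune $\underline{T}$'' to reach $n^{-(\beta+1)/(2\beta+d)}$. This cannot close. Because the score norms blow up as $t\to 0$, necessarily $\alpha\geq 0$; then equating $\sqrt{\underline{T}}$ with $n^{-\beta/(2\beta+d)}\underline{T}^{-\alpha}$ gives a final rate $n^{-\beta/((1+2\alpha)(2\beta+d))}$, strictly slower than $n^{-(\beta+1)/(2\beta+d)}$ for every $\alpha\geq 0$. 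The missing idea is that the score is effectively $\beta+1$ smooth, not $\beta$ smooth: Theorem~\ref{coro:regularityp} bounds $\|s^\star(t,\cdot)+\frac{1}{\sigma^2}\mathrm{id}\|_{\mathcal{H}^{\gamma}(A_t^\varepsilon)}$ for arbitrary $\gamma$ with a controlled $t$-dependent blow-up, and~\eqref{eq:reguscoretime} gives $\mathcal{H}^{(\beta+1)/2}$ regularity in time. Proposition~\ref{prop:aproxsuzuki} exploits both — via a Taylor expansion of order $\lfloor(\beta+1)/2\rfloor$ in time at each grid point — to reach a sup-norm approximation error $\lesssim \tau_k^{-1}\mathrm{polylog}(n)\,n^{-(\beta+1)/(2\beta+d)}$ on $A_{\tau_k}^{1/n}$, together with a covering-number bound that makes the variance term the same size. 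The Fisher loss per sub-interval is therefore $\lesssim\mathrm{polylog}(n)\,\tau_k^{-1}n^{-2(\beta+1)/(2\beta+d)}$, not $n^{-2\beta/(2\beta+d)}$, and after Cauchy--Schwarz and summation the drift term matches $\sqrt{\underline{T}}$ at $\underline{T}\asymp n^{-2(\beta+1)/(2\beta+d)}$. Attributing the $+1$ in the exponent to the tuning of $\underline{T}$ alone misses the step that actually makes the rate attainable.
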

We showcase the proof of Lemma~\ref{lemma:minimaxforsmoothscore} in Section~\ref{sec:lemma:minimaxforsmoothscore} to highlight its simplicity.
This result illustrates how score regularity can yield minimax convergence rates of SGMs, no matter the value of the backward diffusion coefficient $(b_t)_t$.
In short, the one-sided Lipschitz regularity~\eqref{eq:onesidedlipscoreassum} allows to bound the Wasserstein distance by a \emph{boundedly}-weighted \( L^1 \) distance between the true score and its estimator~(Theorem~\ref{thm:stabilitysde}).
On the other hand, the high order smoothness of the score~\eqref{eq:reguscoretime} and~\eqref{eq:reguscorespace} allows to use classical neural networks approximation theory to bound this latter distance through a bias-variance trade-off (Proposition~\ref{prop:tradeoffbv}).

Let us recall that from Lemma~\ref{lem:AbetaimpliesHbeta}, the score $s^\star(t,x)$ of a distribution $p^\star \in \mathcal{A}_K^\beta$ satisfies Properties~\hyperref[{assump:scoreregu}]{$(\mathcal{P}_\beta)$}.  Lemma~\ref{lemma:minimaxforsmoothscore} hence directly directly yields quantitative generalization bounds over $\mathcal{A}_K^\beta$ (see Theorem~\ref{thm:cv_rates}).

\subsection{Time-dependent neural network structure}
\label{sec:scoreapp}
To prove Lemma~\ref{lemma:minimaxforsmoothscore}, let us first introduce the class $\mathcal{S}$ of neural networks used for constructing an effective score estimator.
The class of neural networks is designed to be spatially one-sided Lipschitz to ensure the finiteness of the exponential term appearing in Theorem~\ref{thm:stabilitysde}.
It relies on the following building block.

\begin{definition}[Time-dependent neural network class]
\label{defi:nn}
We denote by $\Psi(L, W,B,V,V^{'})_d^{k+1}$ the class of $\tanh$ neural networks $\phi:\mathbb{R}^{k+1}\rightarrow \mathbb{R}^d$ with:
\begin{itemize}[leftmargin=*]
\item at most $L$ hidden layers, each of maximum width $W$, and having weights bounded by $B$.
    \item $\|\phi\|_\infty\leq V$ .
    \item $\lambda_{\max}(\nabla \phi(t,x)) \leq V^{'}$ for all $x\in \mathbb{R}^{k}$ and $t\in \mathbb{R}$.
\end{itemize}
\end{definition}

\paragraph*{Time discretization.}
The construction builds upon a partition of the time interval \([\underline{T}, \overline{T}]\) into $m$ coarse sub-intervals \([\tau_k, \tau_{k+1}]\), which are themselves further subdivided into $\Upsilon_k$ finer sub-intervals~\([\tau_{k,j}, \tau_{k,j+1}]\), yielding
\begin{align*}
[\underline{T}, \overline{T}]
=
\bigcup_{k=0}^{m-1}
[\tau_k, \tau_{k+1}]
,
\text{~~and~~}
[\tau_k, \tau_{k+1}]
=
\bigcup_{j=0}^{\Upsilon_k-1}
[\tau_{k,j}, \tau_{k,j+1}]
,
\end{align*}
with $\tau_k = \tau_{k,0}$ and $\tau_{k,\Upsilon_k} = \tau_{k+1}$.
For each finer interval \([\tau_{k,j}, \tau_{k,j+1}]\), we employ a separate neural network of the form of Definition~\ref{defi:nn} (see~\eqref{eq:classofnn} below).

We set $\underline{T} := (Cn)^{-\frac{2(\beta+1)}{2\beta+d}}$, $\overline{T} := C \log n$, and $m := \log_2(\overline{T}/\underline{T}) \in \mathbb{N}$ without loss of generality. Note by now that $m \leq C \log n$.
We consider the subdivision $(\tau_k)_{0 \leq k \leq m}$ of $[\underline{T},\overline{T}]$ with extremal points $\underline{T} =: \tau_0 < \tau_m := \overline{T}$, and geometric progression $\tau_{k+1}/\tau_k = 2$, meaning that $\tau_k := 2^{k} \underline{T} = 2^{k-m} \overline{T}$.

For all $k \in \{0,\ldots,m-1\}$, we consider the subdivision $(\tau_{k,j})_{0 \leq j \leq \Upsilon_k}$ of $[\tau_k,\tau_{k+1}]$ with extremal points $\tau_{k} =: \tau_{k,0} < \tau_{k,\Upsilon_k} := \tau_{k+1}$, and arithmetic progression $\tau_{k,j+1}-\tau_{k,j} = \tau_k/\Upsilon_k$, meaning that
\begin{equation}\label{eq:subdiscretizationtime}
\tau_{k,j} := \left(1 + j/\Upsilon_k \right) \tau_k
\text{,~~where~~}
\Upsilon_k=\lceil (\tau_kn^{\frac{2}{2\beta+d}})^{\frac{d-2}{d}}\rceil
.
\end{equation} 

\paragraph*{Global architecture.}
Using notation of Definition~\ref{defi:nn}, the chosen class of candidate scores is 
\begin{equation}\label{eq:classofnn}
\mathcal{S}
:=
\left\{-
\frac{1}{\sigma^2} I_{d\times d} +\sum_{k=0}^{m}\sum_{j=0}^{\Upsilon_k-1} S_{k,j}(t,\cdot)\mathds{1}_{t\in[\tau_{k,j},\tau_{k,j+1})}\Big|\ S_{k,j}\in \Psi(L_{k},\mathrm{W}_{k},B_k,V_{k},V_{k}^{'})^{d+1}_d\right
\},
\end{equation}
with time-dependent fully connected architectures of sizes
\begin{equation}\label{eq:networkshape}L_k=C, \quad  W_k
=
C\log(n)^{C_2} \tau_k^{-1}n^{\frac{d-2}{2\beta+d}}, \quad B_k=C n^{C_2}
,
\end{equation}
and functional constraints
\begin{equation}\label{eq:networkshape2} V_k= C \log(n)^{C_2}(\tau_k\wedge 1)^{-1/2}, 
\quad  
V_k^{'}=Ce^{-\tau_k}(\tau_k\wedge 1)^{-1+\frac{\beta\wedge 1}{d}}.
\end{equation}
The sup-norm scaling $V_k$ is chosen to cover that of the true score over $[\tau_k, \tau_{k+1}]$ (see Theorem~\ref{coro:regularityp}), with the one-sided Lipschitz scaling $V_k'$ handled similarly (see Proposition~\ref{prop:lipregu}). 
We incorporate the term \(-\frac{1}{\sigma^2} I_{d \times d}\) into the network class to incorporate the known limit of the score as \(t \to \infty\). 
This design allows the network to naturally follow the exponential decay of the true score, leading to the guarantee
$
\lambda_{\max}(
\frac{1}{\sigma^2} I_{d \times d}
+
\nabla \widehat{s}(t,x) ) \lesssim e^{-t}$ when $t$ is large.

\subsection{Leveraging score regularity for neural networks approximation}\label{sec:nerualnetowrk}

The smoothness of the score function enables efficient approximation by neural networks. In particular, we make use of the following classical result from neural network approximation theory.
\begin{proposition}[{Fixed-time approximation of $\gamma$-Hölder functions ---~\cite[Theorem~5.1]{de2021approximation}}]
\label{prop:approxrelu}
For all $\varepsilon \in (0,1)$ and $\gamma,A,R>0$, there exists $C_\gamma,C_{\gamma,2}>0$ such that the neural networks class $$\mathbb{K}:=\Psi\Big(2,C_\gamma (A\varepsilon^{-1})^d,C_\gamma R\varepsilon^{-C_\gamma},C_\gamma R,\infty\Big)_d^{d}$$ satisfies for all $\theta\in (0,1)$
$$\log \bigl(\mathcal{N}(\mathbb{K},\|\cdot\|_\infty,\theta)\bigr)\leq C_{\gamma,2} (A\varepsilon^{-1})^d \log\bigl(R(\varepsilon\theta)^{-1}\bigr),$$

and for all $f\in \mathcal{H}^\gamma_R(\B(0,A),\mathbb{R}^d)$, there exists a neural network $N_f\in \mathbb{K}$ satisfying for all $k\in \{0,\ldots,\lfloor \gamma \rfloor\}$
\begin{align*}
    \sup_{x\in \B(0,A)} \|\nabla^kf(x)-\nabla^k N_f(x)\|\leq C R \varepsilon^{\gamma-k}.
\end{align*}
\end{proposition}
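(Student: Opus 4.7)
The plan is to combine classical local polynomial approximation with the ability of tanh networks to emulate polynomials, and conclude by a standard covering-number estimate for bounded-weight networks. This is a well-established scheme in neural-network approximation theory, going back to Yarotsky-type constructions for ReLU networks and adapted to the smooth tanh activation in~\cite{de2021approximation}.

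First, I would tile $\B(0,A)$ by a regular grid of $N^d$ cubes of side of order $\varepsilon$ with $N \asymp A/\varepsilon$, and associate to each tile $Q_\alpha$ its center $x_\alpha$. Under the $\gamma$-Hölder assumption with $\|f\|_{\mathcal{H}^\gamma} \leq R$, Taylor expansion at $x_\alpha$ produces a polynomial $P_\alpha$ of degree $\lfloor \gamma \rfloor$ with coefficients bounded by $CR$ such that $\sup_{x \in Q_\alpha} \|\nabla^k(f - P_\alpha)(x)\| \leq C R \varepsilon^{\gamma - k}$ for every $0 \leq k \leq \lfloor \gamma \rfloor$. Two tanh gadgets are then needed: a \emph{polynomial gadget}, exploiting the Taylor expansion $\tanh(hx) = hx - (hx)^3/3 + O((hx)^5)$ so that finite linear combinations of $\tanh$ units at scaled inputs approximate any monomial $x^\nu$ of degree $\leq \lfloor \gamma \rfloor$ uniformly (together with its derivatives) to arbitrary accuracy using a single hidden layer of constant width; and a \emph{localizer}, where differences of tanh functions approximate a smooth bump $\rho_\varepsilon$, and tensorizing gives $\psi_\alpha(x) = \prod_{i=1}^d \rho_\varepsilon((x_i - x_{\alpha,i})/\varepsilon)$, which after normalization becomes a tanh-representable partition of unity $\{\phi_\alpha\}$ with $\sum_\alpha \phi_\alpha \equiv 1$ and $\|\nabla^k \phi_\alpha\|_\infty \leq C \varepsilon^{-k}$.

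Third, I would take as approximant $N_f(x) := \sum_\alpha \phi_\alpha(x) P_\alpha(x)$, and realize it as a two-hidden-layer tanh network with $O((A/\varepsilon)^d)$ neurons and weights polynomial in $R \varepsilon^{-1}$. The $L^\infty$ bound $\|N_f\|_\infty \leq CR$ follows by the partition-of-unity property. For the derivative bound, writing $f - N_f = \sum_\alpha \phi_\alpha(f - P_\alpha)$ and applying Leibniz, each term decomposes into $\nabla^j \phi_\alpha \cdot \nabla^{k-j}(f - P_\alpha)$; on the support of $\phi_\alpha$ the former is at most $C \varepsilon^{-j}$ and the latter at most $C R \varepsilon^{\gamma - (k-j)}$, yielding the claimed bound $C R \varepsilon^{\gamma - k}$. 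The covering-number estimate then follows from a parameter-counting argument: the class $\mathbb{K}$ is parameterized by $O((A/\varepsilon)^d)$ weights each in $[-CR\varepsilon^{-C}, CR\varepsilon^{-C}]$, and two-hidden-layer tanh networks depend Lipschitz-continuously on their parameters in $\|\cdot\|_\infty$ with Lipschitz constant at most a polynomial in the weight bound and width; discretizing the parameter space at scale $\theta$ divided by this constant yields $\log \mathcal{N}(\mathbb{K},\|\cdot\|_\infty,\theta) \leq C_{\gamma,2}(A/\varepsilon)^d \log(R/(\varepsilon \theta))$.

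The main obstacle will be the simultaneous control of \emph{all} derivatives up to order $\lfloor \gamma \rfloor$ by a single architecture. The product-rule interaction between the large derivatives $\varepsilon^{-j}$ of the partition of unity and the small residuals $\varepsilon^{\gamma-(k-j)}$ of the local Taylor expansions must balance exactly for every intermediate $j$, and this forces the polynomial gadget itself to approximate monomials in $C^{\lfloor \gamma \rfloor}$-norm rather than only pointwise. This is the technical heart of the construction in~\cite{de2021approximation}; the rest of the scheme (gridding, partition of unity, parameter discretization) is more routine and mostly affects constants, depth, and the exponent in the weight bound.
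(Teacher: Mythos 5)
The paper does not prove this proposition; it imports it verbatim from the external reference cited in the statement, so there is no internal proof to compare against. Your sketch follows the same blueprint as the construction in that reference: tile $\B(0,A)$ at scale $\varepsilon$, attach local Taylor polynomials of degree $\lfloor\gamma\rfloor$ (coefficients bounded by $CR$ on the Hölder ball), glue them with a tanh-realizable approximate partition of unity, and propagate the Hölder remainder through the Leibniz rule so that the $\varepsilon^{-j}$ from the localizer derivatives and the $\varepsilon^{\gamma-(k-j)}$ from the Taylor remainder combine to $\varepsilon^{\gamma-k}$ uniformly in $j$. You also correctly identify the crux: the tanh emulation of monomials and of the bump must be accurate in $C^{\lfloor\gamma\rfloor}$-norm, not merely in sup-norm, or else the Leibniz cancellation fails.

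Two points you gloss over and that require care in a full writeup. First, the claim that $\sum_\alpha\phi_\alpha\equiv 1$ "after normalization" is not literally achievable inside a two-hidden-layer tanh architecture, since normalization is a pointwise division; the cited construction works with an \emph{approximate} partition of unity whose deficit from $1$ (and its derivatives) is controlled by a further small power of $\varepsilon$, and this deficit contributes an extra error term that must be absorbed into the final bound. Second, the weight bound in the statement is $C_\gamma R\varepsilon^{-C_\gamma}$ — linear in $R$ — whereas "polynomial in $R\varepsilon^{-1}$" is slightly too permissive; the Taylor coefficients scale linearly in $R$ and the monomial/localizer gadget weights are $R$-independent, so linear dependence on $R$ does hold but should be tracked explicitly. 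The covering-number step is otherwise standard, provided one verifies that the parameter-to-function map of a bounded-width, bounded-weight, two-layer tanh network is Lipschitz in $\|\cdot\|_\infty$ with constant polynomial in the width and weight bound; that Lipschitz constant is precisely what produces the $\log\bigl(R(\varepsilon\theta)^{-1}\bigr)$ factor.
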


From this result applied to fixed-time scores functions $s^\star(\tau_k,\cdot)$, and local Taylor-like neural network constructions in the time variable, we deduce the following approximation result.

\begin{proposition}[Time-dependent score estimation]
\label{prop:aproxsuzuki}
Let $p^\star$ a probability measure on $\mathbb{R}^d$ such that its score function $s^\star$ satisfies Properties~\hyperref[{assump:scoreregu}]{$(\mathcal{P}_\beta)$}.
Then for all $k\in \{0,1,\ldots,m-1\}$ and $j\in\{0,\ldots,\Upsilon_k-1\}$, there exists $\bar{s}\in \Psi(L_k, W_k,B_k, V_k,V_k^{'})^{d+1}_d$  such that
       $$\sup_{t\in [\tau_{k,j},\tau_{k,j+1}]}\sup_{x\in A_{\tau_k}^{1/n}}\|s^\star(t,x)-\bigl(\bar{s}(t,x)-\frac{1}{\sigma^2}x\bigr)\|\leq C \log(n)^{C_2} \frac{1}{\tau_k}n^{-\frac{\beta+1}{2\beta+d}},$$ 
where $\tau_{k,j}$ defined in~\eqref{eq:subdiscretizationtime}
 and $\Psi(L_k, W_k,B_k, V_k,V_k^{'})^{d+1}_d$ defined in Definition~\ref{defi:nn}. Furthermore, we have
$$\log \bigl(\mathcal{N}(\Psi(L_k, W_k,B_k, V_k,V_k^{'})^{d+1}_d,\|\cdot\|_\infty,n^{-1})\bigr)\leq C \log(n)^{C_2}
\frac{ n^{-\frac{2}{2\beta+d}}}{\tau_k}n^{\frac{d}{2\beta+d}}.
$$
\end{proposition}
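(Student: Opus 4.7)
The plan is to combine a high-order Taylor expansion in time anchored at $\tau_{k,j}$ with the fixed-time spatial neural network approximation provided by Proposition~\ref{prop:approxrelu}, leveraging the fact that under Properties~\hyperref[{assump:scoreregu}]{$(\mathcal{P}_\beta)$} the shifted score $\tilde{s}(t,x) := s^\star(t,x) + x/\sigma^2$ is arbitrarily H\"older in $(t,x)$ on $(0,\infty) \times A_t^{\varepsilon}$, with norms blowing up as $t \to 0$ at the explicit rates given by~\eqref{eq:reguscoretime} and~\eqref{eq:reguscorespace}. Working with $\tilde s$ rather than $s^\star$ is natural since the class $\mathcal{S}$ in~\eqref{eq:classofnn} has already subtracted the linear asymptote $-\sigma^{-2}x$; the task reduces to producing $\bar s \approx \tilde s$ in a controlled network class.

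On each time cell $[\tau_{k,j}, \tau_{k,j+1}]$ of length $\Delta_{k,j} := \tau_k/\Upsilon_k$, I would set
$$\bar s(t,x) := \sum_{r=0}^{R} \frac{(t - \tau_{k,j})^r}{r!}\, N_{r}(x),$$
where each $N_r$ is a $\tanh$ network approximating $\partial_t^r \tilde s(\tau_{k,j},\cdot)$ on $A_{\tau_k}^{1/n}$ via Proposition~\ref{prop:approxrelu} with H\"older index $\gamma_s$ and tolerance $\varepsilon_0$ to be fixed. By~\eqref{eq:reguscorespace}, the $r$-th term contributes a spatial error of order $\Delta_{k,j}^r \cdot \log(n)^{C_2}\tau_k^{-(1/2+r+(\gamma_s-\beta)/2)_+} \varepsilon_0^{\gamma_s}$, whose sum over $r$ is geometric with dominant term $r=0$ equal to $\log(n)^{C_2}\tau_k^{-(1/2+(\gamma_s-\beta)/2)}\varepsilon_0^{\gamma_s}$; meanwhile~\eqref{eq:reguscoretime} controls the time-Taylor remainder by $\Delta_{k,j}^{R+1}\log(n)^{C_2}\tau_k^{-(1/2+R+1-\beta/2)}$.

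The parameters $(\gamma_s, R, \varepsilon_0)$ are then chosen to balance these error terms against the target rate: setting $\gamma_s := d(\beta+1)/(d-2)$, $R := \lceil d(\beta+1)/(2(d-2))\rceil$, and $\varepsilon_0 \sim \tau_k^{1/d}\, n^{-(d-2)/(d(2\beta+d))}$ makes the per-subnetwork width $\varepsilon_0^{-d}$ match $W_k$ up to logs, produces a spatial error of exactly $\log(n)^{C_2}\tau_k^{-1}n^{-(\beta+1)/(2\beta+d)}$, and ensures — via the specific $\Upsilon_k$ defined in~\eqref{eq:subdiscretizationtime} — that the Taylor remainder is of the same order. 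The full sum over $r$ is then assembled into a single fully connected $\tanh$ network by realizing the monomial gates $(t-\tau_{k,j})^r$ through a constant-depth product gadget: $\bar s$ ends up with depth $O(1) = L_k$, width $O(R\,\varepsilon_0^{-d}) = O(W_k)$, and weights bounded by $n^{C_2}$. The sup-norm bound $V_k$ then follows from~\eqref{eq:reguscorespace} with $\gamma=0$, while the Jacobian-approximation case of Proposition~\ref{prop:approxrelu} combined with~\eqref{eq:onesidedlipscoreassum} yields the one-sided Lipschitz bound $V_k'$ on $A_{\tau_k}^{1/n}$; the covering number bound is inherited from Proposition~\ref{prop:approxrelu} amplified by the constant factor $R+1$.

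The main obstacle is not the quantitative balancing of rates — which is bookkeeping once $(\gamma_s, R, \varepsilon_0)$ are set — but rather enforcing the one-sided Lipschitz constraint $\lambda_{\max}(\nabla \bar s) \leq V_k'$ \emph{uniformly on all of $\mathbb{R}^{d+1}$}, not just on the approximation set $A_{\tau_k}^{1/n}$ where the derivative estimates of Proposition~\ref{prop:approxrelu} apply. This is exactly where Property~\hyperref[{assump:scoreregu}]{$(\mathcal{P}_\beta)$}.2 enters: the buffer $A_t^{\varepsilon^2}\setminus A_t^{\varepsilon}$ has width at least $C^{-1}\log(n)^{-C_2}$, which is enough room to smoothly glue $\bar s$ to a globally one-sided Lipschitz extension (for instance constant, or linear with Jacobian $0$) outside $A_{\tau_k}^{1/n^2}$, without spoiling either the approximation accuracy on $A_{\tau_k}^{1/n}$ or the global Jacobian bound, provided the transition is carried out with a fixed-depth $\tanh$ cutoff.
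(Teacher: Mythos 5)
Your overall architecture matches the paper's: a high-order time--Taylor expansion anchored at $\tau_{k,j}$, spatial $\tanh$ networks from Proposition~\ref{prop:approxrelu} for the coefficients, and a cutoff to globalize the one-sided Lipschitz bound. Your choices $\gamma_s = d(\beta+1)/(d-2)$ and $\varepsilon_0 \sim \tau_k^{1/d} n^{-(d-2)/(d(2\beta+d))}$ coincide (after unwinding the paper's $\gamma$, $\gamma_2$) with the paper's parameters in the regime $\tau_k > n^{-2/(2\beta+d)}$, and the bookkeeping for the approximation error, the width, and the covering number is sound.

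However, the gluing step is where your proposal breaks. You claim that the width $C^{-1}\log(n)^{-C_2}$ of the buffer $A_t^{\varepsilon^2}\setminus A_t^\varepsilon$ guaranteed by Property~\hyperref[{assump:scoreregu}]{$(\mathcal{P}_\beta)$}.2 is ``enough room'' for the cutoff. It is not, for two reasons. First, a cutoff $\delta$ supported on a shell of width $\log(n)^{-C_2}$ has $\|\nabla\delta\|_\infty \gtrsim \log(n)^{C_2}$. The resulting contribution $\|\bar s_1\|\,\|\nabla\delta\|$ to $\lambda_{\max}(\nabla\bar s)$ is of order $V_k\log(n)^{C_2} \sim e^{-\tau_k}\log(n)^{2C_2}(\tau_k\wedge 1)^{-1/2}$, whereas the required bound is $V_k' = Ce^{-\tau_k}(\tau_k\wedge 1)^{-1+\frac{\beta\wedge 1}{d}}$. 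For $\tau_k$ of order one or larger these powers of $\tau_k$ agree up to constants, so one is left comparing $\log(n)^{2C_2}$ against a constant, and the inequality fails. Second, even inside the transition shell the Jacobian estimates of Proposition~\ref{prop:approxrelu} tie $\nabla\bar s_1$ to $\nabla S^\star(t,\cdot)$, but $S^\star(t,\cdot)$ is only controlled on $A_t^\varepsilon$; outside $A_{\tau_k}^{1/n^2}$ you have no handle on the target, and hence none on $\bar s_1$'s Jacobian there. The paper resolves both issues simultaneously: it pre-composes the target with a smooth retraction $\pi_k$ onto $\mathcal{B}_k\subset A_{\tau_k}^{1/n^2}$, so that $S^\star(t,\pi_k(\cdot))$ is globally bounded and smooth, and it cuts off over a \emph{much wider} collar $\mathcal{B}\setminus\mathcal{B}_k$ of width $\log(n)^{C_2^\star}$ (with $C_2^\star$ tunable), which makes $\|\nabla\delta\|\lesssim\log(n)^{-C_2^\star}$ and lets the product $\|\bar s_1\|\,\|\nabla\delta\|$ be absorbed into $V_k'$ once $C_2^\star$ is chosen large. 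Role of Property~\hyperref[{assump:scoreregu}]{$(\mathcal{P}_\beta)$}.2 in the paper is thus subtler than you describe: it is used to verify the containment $\mathcal{B}_k\subset A_{\tau_k}^{1/n^2}$ so that the retracted target stays in the good region, not to supply the cutoff collar itself. Without the retraction and the wide collar your construction does not produce an element of $\Psi(L_k,W_k,B_k,V_k,V_k')_d^{d+1}$.
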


The proof of Proposition~\ref{prop:aproxsuzuki} can be found in Section~\ref{sec:coro:prop:aproxsuzuki}. This result illustrates how the adaptive regularity of the score function translates directly into efficient neural network approximations. In particular, it ensures that one can construct neural networks that are both accurate and one-sided Lipschitz, thereby controlling the exponential stability term appearing in Theorem~\ref{thm:stabilitysde}.

\subsection{Score matching rates from score regularity}
\label{sec:lemma:minimaxforsmoothscore}
Having defined the neural network class $\mathcal{S}$ for the score estimator in~\eqref{eq:classofnn}, we can now proceed to the proof of Lemma~\ref{lemma:minimaxforsmoothscore}. 
The proof follows three main steps:
\begin{enumerate}

    \item Using the stability results of Section~\ref{sec:two-step-analysis}, we show that the the problem reduces to controlling the $L^1$-distance between the true score and its estimator.
    \item Applying the oracle bound of Section~\ref{sec:oracle-for-score-matching}, we then bound this $L^1$-distance in terms of a trade-off between the best approximation error achievable by the class \( \mathcal{S} \) and its complexity on each sub-interval \( [\tau_{k,j}, \tau_{k,j+1}] \).
    \item Finally, using the universal approximation theory of neural networks from Section~\ref{sec:nerualnetowrk}, we show that this trade-off leads to the desired minimax convergence rate.
\end{enumerate}

\begin{proof}[Proof of Lemma~\ref{lemma:minimaxforsmoothscore}] 
As pointed out in Section~\ref{sec:OU-generation}, the distribution $\widehat{p} := \widehat{p}_{\overline{T} - \underline{T}} \sim \widehat{X}_{\overline{T} - \underline{T}}$ accumulates three distinct errors
\begin{itemize}[leftmargin=*]
    \item \textbf{Initial condition.} 
    The true backward process is initialized at $p_0 = \vect{p}{\overline{T}}$, while the approximated backward is initiated at $\vect{p}{\infty} = \mathcal{N}(0,\sigma^2 I_{d \times d})$.
    \item \textbf{Score error.} 
    The true backward process uses the true score $s^\star(t,\cdot)$, while the approximated backward process uses the estimator $\widehat{s}(t,\cdot)$.
    \item \textbf{Early stopping.}
    The true backward process should be run for time $\overline{T}$ to recover distribution $p_{\overline{T}} = \vect{p}{0} = p^\star$, while the approximate backward is stopped earlier, at time $\overline{T}-\underline{T}$.
\end{itemize}

\textbf{Early stopping.}
Let us first bound the error arising from the early stopping.
We have
\begin{align}\label{align:firstboundw1}
    \mathrm{W}_1(\widehat{p}, p^\star) & \leq \mathrm{W}_1(\widehat{p}, \vect{p}{\underline{T}})+ \mathrm{W}_1(\vect{p}{\underline{T}}, p^\star)
    .
\end{align}
Considering the independent coupling $(\vect{X}{0}, Z)\sim p^\star \otimes \mathcal{N}(0,I_{d \times d})$, we get from~\eqref{align:lawofxt} that
\begin{align*}
\mathrm{W}_1(\vect{p}{\underline{T}}, p^\star)&\leq \mathbb{E}[\|\vect{X}{0}-(e^{-\underline{T}}\vect{X}{0}+\sigma_{\underline{T}}Z)\|]\\
&\leq (1-e^{-\underline{T}})\mathbb{E}[\|\vect{X}{0}\|]+\sigma_{\underline{T}}\mathbb{E}[\|Z\|]\\
&\leq C\underline{T}^{1/2}
\\
&=
Cn^{-\frac{\beta+1}{2\beta+d}},
\end{align*}
which concludes the bound on the the early stopping error. Now, to bound the first term of~\eqref{align:firstboundw1}, let us recall that $p_{\overline{T}-t}=\vect{p}{t}$ is the distribution of the process $X_t$ solution to the equation
\begin{align*}
    \begin{cases}
    \mathrm{d} X_t
=
a_t(X_t) \mathrm{d} t 
+
\sqrt{2} b_t \mathrm{d} B_t, \quad X_0 \sim \vect{p}{\overline{T}}\\
a_t(x) :=  x + \bigl( \sigma^2  + b_t^2 \bigr) s^\star(\overline{T}-t,x)
    \end{cases}
\end{align*}
 and recall that $\widehat{p}_t(x) \mathrm{d}x$ is the distribution of the process $\widehat{X}_t$ solution at time $t$ of the differential equation
\begin{align*}
    \begin{cases}
\mathrm{d} \widehat{X}_t
=
\widehat{a}_t(\widehat{X}_t) \mathrm{d} t 
+
\sqrt{2} b_t \mathrm{d} B_t, \quad \widehat{X}_0 \sim \vect{p}{\infty}\\
\widehat{a}_t(x) := x + \bigl( \sigma^2  + b_t^2 \bigr) \widehat{s}(\overline{T}-t,x).
    \end{cases}
\end{align*}
To bound $\mathrm{W}_1(\widehat{p}_{\overline{T}-\underline{T}}, \vect{p}{\underline{T}})=\mathrm{W}_1(\widehat{p}_{\overline{T}-\underline{T}}, p_{\overline{T}-\underline{T}})$, we use the stability results from Section~\ref{sec:stability-of-sdes}. 

\textbf{Initial condition.}
We first bound the error arising from the initial condition. 
To this aim, let $\widetilde{\mu}_{t}$ denote the distribution of the solution $\widetilde{X}_t$ to the equation
\begin{align*}
    \mathrm{d} \widetilde{X}_t
=
- \widehat{a}_t(\widetilde{X}_t) \mathrm{d} t 
+
\sqrt{2} b_t \mathrm{d} B_t, \quad \widetilde{X}_0 \sim \vect{p}{\overline{T}}.
\end{align*}
over $[0,\overline{T}-\underline{T}]$.
Then, triangle inequality yields
\begin{align}\label{align:firstboundwbis}
    \mathrm{W}_1(\widehat{p}_{\overline{T}-\underline{T}}, p_{\overline{T}-\underline{T}}) & \leq \mathrm{W}_1(\widehat{p}_{\overline{T}-\underline{T}}, \widetilde{\mu}_{\overline{T}-\underline{T}})+ \mathrm{W}_1(\widetilde{\mu}_{\overline{T}-\underline{T}},p_{\overline{T}-\underline{T}}),
\end{align}
By definition of $\mathcal{S}$ (see~\eqref{eq:networkshape} and~\eqref{eq:networkshape2}), we have $\|\widehat{s}\|_\infty\leq C\log(n)^{C_2} n^{\frac{\beta+1}{2\beta+d}},$ so using the stability of SDEs with respect to the initial condition (Lemma~\ref{lem:SDEstability-varying-drift}) with $\varepsilon=n^{-(K\vee \frac{1}{\sigma^2})}$ we have
\begin{align*}
    \mathrm{W}_1(\widehat{p}_{\overline{T}-\underline{T}}, \widetilde{\mu}_{\overline{T}-\underline{T}}) & \leq C\log(n)^{C_2} n^{\frac{\beta+1}{2\beta+d}}\left(\|\vect{p}{\infty}-\vect{p}{\overline{T}}\|_{L^1}+n^{-1}\right).
\end{align*}
 Then, as $\overline{T}\geq C \log( n$),~\cite[Theorem~5.2.1]{bakry2013analysis} ensures that
\begin{align*}
    \|\vect{p}{\infty}-\vect{p}{\overline{T}}\|_{L^1} \leq \sqrt{2 \text{KL}(\vect{p}{\overline{T}},\vect{p}{\infty})}\leq 2e^{-\overline{T}/\sigma^2}\sqrt{\text{KL}(p^\star,\vect{p}{\infty})}\leq Ce^{-\overline{T}/\sigma^2} \leq Cn^{-1}.
\end{align*}

\textbf{Score error.}
Let us now bound the second term of~\eqref{align:firstboundwbis} which corresponds to the score approximation error. As the score estimation is done independently on each intervals $[\tau_{k,j},\tau_{k,j+1}]$, we first decompose the distance
$\mathrm{W}_1(\widetilde{\mu}_{\overline{T}-\underline{T}},p_{\overline{T}-\underline{T}})$ into contributions that involve only the dynamics within each time sub-interval $[\tau_{k,j},\tau_{k,j+1}]$. 
More precisely, let $p_t^{k,j}$ denote the distribution of the solution $X^{k,j}_t$ to the equation
$$\mathrm{d} X^{k,j}_{t}
=
- \widehat{a}_t^{k,j}(X^{k,j}_{t}) \mathrm{d} t 
+
\sqrt{2} b_t \mathrm{d} B_t, \quad X^{k,j}_{0}\sim \vect{p}{\overline{T}},$$
over $[0,\overline{T}-\underline{T}]$,
with
\begin{align*}
\widehat{a}_t^{k,j}
:= \left\{\begin{array}{ll} a_t & \text{if } t\leq \overline{T}-\tau_{k,j},
\\ \widehat{a}_t & \text{if } t > \overline{T}-\tau_{k,j}.
    \end{array}\right.
\end{align*}
From triangle inequality again, 
\begin{align*}
   \mathrm{W}_1(\widetilde{\mu}_{\overline{T}-\underline{T}},p_{\overline{T}-\underline{T}})\leq \sum_{k=0}^{m-1}\sum_{j=0}^{\Upsilon_k-1} \mathrm{W}_1(p_{\overline{T}-\underline{T}}^{k,j+1}, p_{\overline{T}-\underline{T}}^{k,j}).
\end{align*}
Now, using the stability of SDEs with respect to the drift (Theorem~\ref{thm:stabilitysde}) on each pair of stochastic processes $\bigl((X^{k,j+1}_t)_t,(X^{k,j}_t)_t\bigr)$, we obtain the bound
\begin{align*}
\mathrm{W}_1(\widetilde{\mu}_{\overline{T}-\underline{T}},p_{\overline{T}-\underline{T}}) 
&\leq
\sum_{k=0}^{m-1}\sum_{j=0}^{\Upsilon_k-1} \int_{\overline{T}-\tau_{k,j+1}}^{\overline{T}-\tau_{k,j}}e^{\int_t^{\overline{T}-\underline{T}} \|\lambda_{\max}(\nabla \widehat{a}_u))\|_\infty \mathrm{d}u} \int  \|\widehat{s}(\overline{T}-t,x) - s^\star(\overline{T}-t,x)\|p_{t}(x)\mathrm{d}x \mathrm{d}t
\\
&
=
\sum_{k=0}^{m-1}\sum_{j=0}^{\Upsilon_k-1} \int_{\tau_{k,j}}^{\tau_{k,j+1}}e^{\int_{\overline{T}-t}^{\overline{T}-\underline{T}} \|\lambda_{\max}(\nabla \widehat{a}_{u}))\|_\infty \mathrm{d}u} \int  \|\widehat{s}(t,x) - s^\star(t,x)\|\vect{p}{t}(x)\mathrm{d}x \mathrm{d}t.
\end{align*}
As the class of estimators $\mathcal{S}$ is composed of neural networks $\phi$ satisfying 
$$\lambda_{\max}\left(\frac{1}{\sigma^2}I_{d \times d}+\nabla \phi(t,\cdot)\right)\leq Ce^{-t}(1+t^{-1+\frac{\beta\wedge 1}{d}}),$$
we deduce that 
$$\int_{0}^{\overline{T}-\underline{T}} \|\lambda_{\max}(\nabla \widehat{a}_u)\|_\infty \mathrm{d}u\leq C,
$$ 
and hence
\begin{align*}
\mathrm{W}_1(\widetilde{\mu}_{\overline{T}-\underline{T}},p_{\overline{T}-\underline{T}}) 
&\leq  
C
\sum_{k=0}^{m-1}\sum_{j=0}^{\Upsilon_k-1} \int_{\tau_{k,j}}^{\tau_{k,j+1}}\int  \|\widehat{s}(t,x) - s^\star(t,x)\|\vect{p}{t}(x)\mathrm{d}x \mathrm{d}t
.
\end{align*}
Then, applying Cauchy-Schwarz inequality we get
\begin{align}\label{align:finalboundw1}
\mathrm{W}_1(\widetilde{\mu}_{\overline{T}-\underline{T}},p_{\overline{T}-\underline{T}}) & \leq C  \sum_{k=0}^{m-1}\sum_{j=0}^{\Upsilon_k-1} \sqrt{\tau_{k,j+1}-\tau_{k,j}}\left(\int_{\tau_{k,j}}^{\tau_{k,j+1}} \int  \|\widehat{s}(t,x) - s^\star(t,x)\|^2\vect{p}{t}(x)\mathrm{d}x \mathrm{d}t \right)^{1/2}.
\end{align}
We now use the bias-variance bound from Proposition~\ref{prop:tradeoffbv} and the neural network approximation of Proposition~\ref{prop:aproxsuzuki} to control the score approximation. Recall that in Proposition~\ref{prop:aproxsuzuki}, the approximation properties hold uniformly over $t \in [\tau_{k,j}, \tau_{k,j+1}]$ and $x \in A_{\tau_k}$, where $\int_{A_{\tau_k}} \vect{p}{t}(x)\mathrm{d}x \geq 1-1/n$.

For all $s\in \mathcal{S}$ and $t\in [\tau_k,\tau_{k+1}]$, we have by definition of the class $\mathcal{S}$ that $\|s(t,x)\|\leq C\log(n)^{C_2}(1+ \tau_k^{-1/2})$ and from~\eqref{eq:estimatenormscore} we have $\|s^\star(t,x)\|\leq C(1+t^{-1})(1+\|x\|)$.  Then,  
\begin{align}\label{eq:control_L2norm_subinterval_outside}
    \int_{(A_{\tau_k})^c} \|s(t, x) - s^\star(t,x)\|^2 \vect{p}{t}(x)\mathrm{d}x \notag 
    & \leq  C\log(n)^{C_2} \int_{(A_{\tau_k})^c} (1+\|x\|^2)(1+\tau_k^{-2}) \vect{p}{t}(x)\mathrm{d}x \notag\\
   & \leq  C\log(n)^{C_2}\biggl( \int_{(A_{\tau_k})^c\cap \B(0,C\log(n))} (1+\tau_k^{-2}) \vect{p}{t}(x)\mathrm{d}x\notag\\
   &\quad + \int_{\B(0,C\log(n))^c} (1+\|x\|^2)(1+\tau_k^{-2}) \vect{p}{t}(x)\mathrm{d}x\biggl) \notag \\
   &\leq C(1+\tau_k^{-2})\log(n)^{C_2}n^{-1}\notag,
\end{align}
where we used that $\vect{p}{t}$ is $C(K^2+\sigma^2)$-subGaussian for the last line. For the score approximation, using Proposition~\ref{prop:tradeoffbv}  we may thus write
\begin{align}\label{align:findeborneoracle}
&\mathbb{E}_{\{X^{(i)}\}_{i=1}^n} \Bigg[
     \int_{\tau_{k,j}}^{\tau_{k,j+1}}\int \| \widehat{s}(t, x) - s^\star(t,x) \|^2 \vect{p}{t}(x)\mathrm{d}x \mathrm{d}t
    \Bigg]\nonumber\\
    & \leq C \inf_{s \in \mathcal{S}} \int_{\tau_{k,j}}^{\tau_{k,j+1}}\Big(\|s(t, \cdot) - s^\star(t,\cdot)\|^2_{\mathcal{H}^0(A_{\tau_k})} + \int_{(A_{\tau_k})^c} \|s(t, x) - s^\star(t,x)\|^2 \vect{p}{t}(x)\mathrm{d}x\Big) \mathrm{d}t\nonumber\\
    & \quad + \frac{C}{n}\log(n)^{C_2}(\tau_{k,j+1}-\tau_{k,j})\frac{1}{\tau_k}
 \log \mathcal{N}\bigl(\restr{\mathcal{S}}{[\tau_{k,j},\tau_{k,j+1}]}, \|\cdot\|_{\infty}, n^{-1}\bigr)\nonumber \\
  & \leq C\log(n)^{C_2}(\tau_{k,j+1}-\tau_{k,j})\left(\inf_{s \in \mathcal{S}}\|s- s^\star\|^2_{\mathcal{H}^0([\tau_{k,j},\tau_{k,j+1}]\times A_{\tau_k})}+(1+\tau_k^{-2})n^{-1}\right)\nonumber\\
 & \quad +\frac{C}{n}\log(n)^{C_2}(\tau_{k,j+1}-\tau_{k,j})\frac{1}{\tau_k}
 \log \mathcal{N}\bigl(\restr{\mathcal{S}}{[\tau_{k,j},\tau_{k,j+1}]}, \|\cdot\|_{\infty}, n^{-1}\bigr).
\end{align}
Let us apply Proposition~\ref{prop:aproxsuzuki} to obtain the bound on the covering number and approximation error of the class $\mathcal{S}$.
We first treat the case $\tau_k< n^{-\frac{2}{2\beta+d}}$. In this case $\Upsilon_k=1$, so that we have to bound
\begin{align*}
    &\mathbb{E}_{\{X^{(i)}\}_{i=1}^n} \Bigg[(\tau_{k+1} - \tau_{k})
     \int_{\tau_{k}}^{\tau_{k+1}}\int \| \widehat{s}(t, x) - s^\star(t,x) \|^2 \vect{p}t(x)  \mathrm{d}x \mathrm{d}t
    \Bigg] \\ 
    &\quad = \mathbb{E}_{\{X^{(i)}\}_{i=1}^n} \Bigg[\tau_k
     \int_{\tau_{k}}^{\tau_{k+1}}\int \| \widehat{s}(t, x) - s^\star(t,x) \|^2 \vect{p}t(x) \mathrm{d}x \mathrm{d}t
    \Bigg], 
\end{align*}
since $\tau_{k+1} = 2 \tau_k$.
Combining~\eqref{align:findeborneoracle} and the approximation results of Proposition~\ref{prop:aproxsuzuki} leads to
\begin{align*}
&\mathbb{E}_{\{X^{(i)}\}_{i=1}^n} \Bigg[\tau_k
     \int_{\tau_{k}}^{\tau_{k+1}}\int \| \widehat{s}(t, x) - s^\star(t,x) \|^2 \vect{p}t(x)  \mathrm{d}x \mathrm{d}t
    \Bigg]\\
& \leq C  \tau_k\left(\tau_k \left(\frac{1}{\tau_k}\log(n)^{C_2}n^{-\frac{\beta+1}{2\beta+d}}\right)^2+\frac{1}{n}\log(n)^{C_2}\left(\frac{\tau_k}{\tau_k}\frac{1}{\tau_k}n^{\frac{d-2}{2\beta+d}}\right)+\tau_k(1+\tau_k^{-2})\log (n)^{C_2}n^{-1}\right) \\
& \leq C \log (n)^{C_2} n^{-\frac{2(\beta+1)}{2\beta+d}}.
\end{align*}
Now assume that $\tau_k> n^{-\frac{2}{2\beta+d}}$. In this case, we have $\tau_{k,j+1}-\tau_{k,j}=\tau_k(\tau_kn^{\frac{2}{2\beta+d}})^{-\frac{d-2}{d}}$ so using~\eqref{align:findeborneoracle} and Proposition~\ref{prop:aproxsuzuki} again yields
\begin{align*}
    &\mathbb{E}_{\{X^{(i)}\}_{i=1}^n} \Bigg[\tau_k(\tau_kn^{\frac{2}{2\beta+d}})^{-\frac{d-2}{d}}
     \int_{\tau_{k,j}}^{\tau_{k,j+1}}\int \| \widehat{s}(t, x) - s^\star(t,x) \|^2 \vect{p}{t}(x)  \mathrm{d}x \mathrm{d}t
    \Bigg]\\
             & \leq  C\tau_k^2(\tau_kn^{\frac{2}{2\beta+d}})^{-2\frac{d-2}{d}}\left( \frac{1}{\tau_k^2}\log(n)^{C_2}n^{-\frac{2(\beta+1)}{2\beta+d}}+\frac{1}{n}\log(n)^{C_2}\left(\frac{1}{\tau_k^2}n^{\frac{d-2}{2\beta+d}}\right)+\frac{C}{n}(1+\tau_k^{-2}) \log (n)^{C_2} \right)\\
& \leq C\log (n)^{C_2} (\tau_kn^{\frac{2}{2\beta+d}})^{-2\frac{d-2}{d}} n^{-\frac{2(\beta+1)}{2\beta+d}}.
\end{align*}

Finally coming back to~\eqref{align:finalboundw1}, taking ${k^\star}:=\max \{k|\tau_k\leq n^{-\frac{2}{2\beta+d}}\}$, we conclude the proof by the bound
\begin{align*}
    \mathrm{W}_1(\widehat{p}, p^\star) 
    &\leq
    Cn^{-1}+C\log (n)^{C_2}\sum_{k=0}^{k^\star}\left( n^{-\frac{2(\beta+1)}{2\beta+d}}\right)^{1/2}+ C\log(n)^{C_2}\sum_{k=k^\star}^{m-1}\sum_{j=0}^{\Upsilon_k-1} \left( (\tau_k n^{\frac{2}{2\beta+d}})^{-2\frac{d-2}{d}} n^{-\frac{2(\beta+1)}{2\beta+d}} \right)^{1/2}\\
    &\leq C \log (n)^{C_2} n^{-\frac{\beta+1}{2\beta+d}},
\end{align*}
where we used that $k^* \leq m \leq C \log n$, together with 
\begin{align*}
\Upsilon_k (\tau_k n^{\frac{2}{2\beta+d}})^{-\frac{d-2}{d}} 
&\leq 
\left ( (\tau_k n)^\frac{2}{2\beta+d} +1\right )^{\frac{d-2}{d}} (\tau_k n^{\frac{2}{2\beta+d}})^{-\frac{d-2}{d}}
\leq 
2^{\frac{d-2}{2}}
\end{align*}
for all $k \geq k^*$.
\end{proof}

\section{Final rates and outlook}\label{sec:concluandpersp}

\subsection{Minimax optimality of SGMs}
This section presents the final convergence rates achieved under our statistical model. 
Assuming that the data distribution \( p^\star \) satisfies Assumption~\ref{assum:1}, the associated score function enjoys both the one-sided Lipschitz regularity property~\eqref{eq:onesidedlipscoreassum} and the higher-order regularity properties~\eqref{eq:reguscoretime},~\eqref{eq:reguscorespace}. 
By applying Lemma~\ref{lemma:minimaxforsmoothscore}, we then directly conclude that the score-based estimator achieves the minimax optimal rate.

\begin{maintheorem}[Main minimax upper bound]
\label{thm:cv_rates}
    Let $n\in \mathbb{N}_{>0}$, $\beta> 0$, $K\geq 1$, $p^\star\in \mathcal{A}^\beta_K$ (see~\eqref{eq:finalstatisticalmodel}) and $(X^{(1)},\ldots,X^{(n)})$ an i.i.d. sample drawn from $p^\star$.
    Then when using the score estimator~\eqref{eq:scoreestimatorheuristic} with the neural network class~\eqref{eq:classofnn},  the distribution of $\widehat{p}$ of the score-based generator $\widehat{X}_{\overline{T}-\underline{T}}$ obtained from the backward dynamic~\eqref{eq:final-generator} with $K^{-1} \leq \sigma \leq K$ and $ \|b\|_{L^\infty([0,\overline{T}-\underline{T}])} \leq K$, satisfies 
    \begin{align*}
    \sup_{p^\star \in \mathcal{A}^\beta_K}\ \mathbb{E}_{X^{(i)}\sim p^\star}[\mathrm{W}_1(p^\star,\widehat{p})]  \underset{{\mathrm{polylog}(n)}}{\lesssim}
    n^{-\frac{\beta+1}{2\beta + d}}
    .
\end{align*}
\end{maintheorem}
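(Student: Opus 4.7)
The proof plan is essentially a clean composition of the two pillars already assembled in the paper: Lemma~\ref{lem:AbetaimpliesHbeta}, which transfers the model-level assumption $p^\star \in \mathcal{A}^\beta_K$ to the score-level regularity package $(\mathcal{P}_\beta)$ with constants depending only on $(d,\sigma,\beta,K)$; and Lemma~\ref{lemma:minimaxforsmoothscore}, which upgrades $(\mathcal{P}_\beta)$ plus subGaussianity of $p^\star$ into the $\mathrm{W}_1$ convergence rate $n^{-(\beta+1)/(2\beta+d)}$ up to $\mathrm{polylog}(n)$ factors. The goal is thus to check the two hypotheses of Lemma~\ref{lemma:minimaxforsmoothscore} uniformly in $p^\star \in \mathcal{A}^\beta_K$ and invoke it.

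First, I would verify the subGaussianity~\eqref{eq:subgaussian} of $p^\star$ with a constant $K'$ depending only on $K$. By Assumption~\ref{assum:1}, $p^\star = e^{-u+a}$ with $u$ satisfying $\nabla^2 u \succeq K^{-1} I$ and $\|\argmin u\| \leq K$, while $a$ is bounded by $\|a\|_{\mathcal{H}^\beta} \leq K$. Since the log-concave measure $e^{-u}/Z$ is $K$-subGaussian by the Brascamp--Lieb / Bakry-Émery argument, and since $a$ is uniformly bounded on the support of $p^\star$, the bounded-density-ratio perturbation only affects the subGaussian constant by a factor depending on $K$; hence $p^\star$ is $K'(d,K)$-subGaussian.

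Next, I would directly invoke Lemma~\ref{lem:AbetaimpliesHbeta} to obtain that $s^\star$ satisfies Properties~$(\mathcal{P}_\beta)$, with constants $C, C_\star$ depending only on $(d,\sigma,\beta,K)$. The verification that the hypotheses of Lemma~\ref{lemma:minimaxforsmoothscore} are met is then complete: $\sigma \in [K^{-1},K]$ and $\|b\|_{L^\infty([0,\overline{T}-\underline{T}])} \leq K$ fit into the allowed ranges of that lemma. Applying it gives, for every fixed $p^\star \in \mathcal{A}^\beta_K$, the bound $\mathbb{E}[\mathrm{W}_1(p^\star,\widehat{p})] \leq C_3 \log(n)^{C_3} n^{-(\beta+1)/(2\beta+d)}$, and taking the supremum over the class preserves the rate because all constants depend on $p^\star$ only through $K$.

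The "hard part" is really only a bookkeeping one, namely to ensure that the constants produced by Lemma~\ref{lem:AbetaimpliesHbeta} and the subGaussian constant from Assumption~\ref{assum:1} are genuinely uniform across $\mathcal{A}^\beta_K$ — this is already built into the statement of Lemma~\ref{lem:AbetaimpliesHbeta} (constants depend only on $d,\sigma,\beta,K$) and into the quantitative shape of Assumption~\ref{assum:1} (parameters~$\beta,K$ only). In particular, since Lemma~\ref{lemma:minimaxforsmoothscore} also absorbs the choices $\underline{T}, \overline{T}$ and the neural network architecture~\eqref{eq:classofnn} into its universal constants, no additional tuning in $p^\star$ is needed. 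Hence no genuine new argument is required: the theorem is effectively the diagram $p^\star \in \mathcal{A}^\beta_K \xRightarrow{\text{Lem.}~\ref{lem:AbetaimpliesHbeta}} (\mathcal{P}_\beta) \xRightarrow{\text{Lem.}~\ref{lemma:minimaxforsmoothscore}} \text{rate}$, together with a one-line uniform subGaussianity check.
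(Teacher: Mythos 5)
Your proposal is correct and follows essentially the same route as the paper: compose Lemma~\ref{lem:AbetaimpliesHbeta} ($p^\star\in\mathcal{A}^\beta_K\Rightarrow(\mathcal{P}_\beta)$) with Lemma~\ref{lemma:minimaxforsmoothscore} ($(\mathcal{P}_\beta)$ plus subGaussianity $\Rightarrow$ $\mathrm{W}_1$-rate). You are actually slightly more careful than the paper's two-line proof, since you explicitly verify the subGaussianity hypothesis of Lemma~\ref{lemma:minimaxforsmoothscore} via the log-concave-plus-bounded-perturbation structure of Assumption~\ref{assum:1}, whereas the paper only notes this fact in passing in Section~\ref{sec:model-setup} and leaves it implicit in the proof of Theorem~\ref{thm:cv_rates}.
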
  
\begin{proof}
From Lemma~\ref{lem:AbetaimpliesHbeta} we have that if $p^\star \in \mathcal{A}^\beta_K$, then  the score function $s^\star(t,x)$ satisfies Properties~\hyperref[{assump:scoreregu}]{$(\mathcal{P}_\beta)$}.
 Applying Lemma~\ref{lemma:minimaxforsmoothscore}, we obtain the minimax upper bound.
\end{proof} The rate $n^{-\frac{\beta+1}{2\beta + d}}$ naturally arises in the estimation of a $\beta$-smooth density in $\mathbb{R}^d$ in transportation distances.
It is known to be the minimax rate for estimating densities in $ \mathcal{H}_K^\beta$ with respect to the Wasserstein-1 distance~\cite{Uppal19}. 
In fact, a close look at the proof of~\cite[Theorem~4]{Uppal19} shows that this minimax rate still holds when densities are further restricted to have support contained in a fixed bounded set. 
What remains is to verify that our additional shape constraint does not simplify the estimation problem. This is the focus of the next result.

\begin{maintheorem}[Main minimax lower bound]
\label{thm:borneinf} 
    Let $n\in \mathbb{N}_{>0}$, $\beta> 0$, $K\geq 1$, $p^\star\in \mathcal{A}^\beta_K$ (see~\eqref{eq:finalstatisticalmodel}) and $(X^{(1)},\ldots,X^{(n)})$ be an i.i.d. sample drawn from $p^\star$. Then,
\begin{align*}
    \inf_{\widehat{p}_n} \sup_{p^\star \in \mathcal{A}^\beta_K}\ \mathbb{E}_{X^{(i)}\sim p^\star}[\mathrm{W}_1(p^\star,\widehat{p}_n)] 
      \gtrsim
     n^{-\frac{\beta+1}{2\beta + d}},
\end{align*}
where  $\widehat{p}_n$ ranges among all the estimators of $p^\star$ based on $(X^{(1)},\ldots,X^{(n)})$.
\end{maintheorem}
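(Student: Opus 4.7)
The plan is to run the standard Fano-type lower bound for density estimation in Hölder classes under the $\mathrm{W}_1$ loss, as carried out in~\cite{Uppal19}, and simply verify that the usual hard instances can be placed inside the more restrictive model $\mathcal{A}^\beta_K$. Fix a smooth strongly log-concave background $u(x) := \|x\|^2/(2\sigma_0^2)$ with $\sigma_0^2 \leq K$: then $\nabla^2 u \succeq K^{-1} I_{d\times d}$, $\argmin u = 0$, and $u$ is polynomial, so that items~$2$-$4$ of Assumption~\ref{assum:1} hold with room to spare, independently of the perturbation. Pick a cube $Q \subset \B(0,1)$ on which $p_0 := e^{-u}/Z_0$ is bounded below by some $c_0>0$, partition $Q$ into $M \asymp h^{-d}$ sub-cubes of side $h$ centered at $x_1,\ldots,x_M$, and fix a $\mathcal{C}^\infty$ bump $\psi$ supported in $\B(0,1/2)$ with $\int\psi = 0$ (the difference of two translates of a positive bump, say). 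For $\omega \in \{-1,+1\}^M$, define
\begin{align*}
a_\omega(x) := c_1\, h^\beta \sum_{i=1}^M \omega_i\,\psi\bigl((x-x_i)/h\bigr) - \log Z_\omega,
\qquad
p_\omega := \exp(-u + a_\omega),
\end{align*}
where $Z_\omega$ is the normalizing factor. Since the bumps have disjoint supports, a direct computation gives $\|a_\omega\|_{\mathcal{H}^\beta} \lesssim c_1$, and $|\log Z_\omega| = O(h^{2\beta}) = o(1)$ only shifts $a_\omega$ by a small constant; for $c_1$ small enough, $p_\omega \in \mathcal{A}^\beta_K$ uniformly in $\omega$.

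Applying the Varshamov--Gilbert lemma extracts a subfamily $\Omega \subset \{-1,+1\}^M$ with $|\Omega| \geq 2^{M/8}$ and pairwise Hamming distance $d_H(\omega,\omega') \geq M/4$. The argument rests on two matched estimates. First, a \emph{Wasserstein separation}: on each ball $\B(x_i, h)$ the signed measure $p_\omega - p_{\omega'}$ has vanishing integral and density amplitude $\asymp c_0\, h^\beta$, so a local Kantorovich--Rubinstein dual certificate---a $1$-Lipschitz function that is affine with slope $\pm 1$ along the axis joining the positive and negative lobes of $\psi$, cut off outside $\B(x_i, h)$---contributes at least $\asymp c_0\, h^\beta\, h^d \cdot h$ per differing coordinate, yielding
\begin{align*}
\mathrm{W}_1(p_\omega, p_{\omega'})
\gtrsim
d_H(\omega, \omega')\, h^{\beta+d+1}
\gtrsim
h^{\beta+1}.
\end{align*}
Second, a \emph{KL contraction}: a second-order expansion of $\log(p_\omega/p_{\omega'})$ combined with $p_\omega \asymp p_0$ yields $\mathrm{KL}(p_\omega \| p_{\omega'}) \lesssim \int (a_\omega - a_{\omega'})^2\, p_\omega\, \mathrm{d}x \lesssim M\, h^{2\beta}\, h^d = h^{2\beta}$.

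Tuning $h := c_2\, n^{-1/(2\beta+d)}$ with $c_2>0$ small enough ensures $n \cdot \max_{\omega,\omega'} \mathrm{KL}(p_\omega \| p_{\omega'}) \lesssim c_2^{2\beta} \log|\Omega|$, so that the Fano--Tsybakov inequality, combined with the Wasserstein separation $\mathrm{W}_1(p_\omega, p_{\omega'}) \gtrsim n^{-(\beta+1)/(2\beta+d)}$, yields the announced minimax lower bound. The main technical obstacle is the uniform membership $p_\omega \in \mathcal{A}^\beta_K$ (rather than in a plain Hölder ball): the polynomial choice of $u$ is precisely what makes items~$2$-$4$ of Assumption~\ref{assum:1} entirely insensitive to $a$, and the bump amplitude $\epsilon = c_1 h^\beta$ is therefore dictated solely by the Hölder constraint on $a_\omega$, exactly as in the unconstrained setting of~\cite{Uppal19}. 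The remaining steps---verifying the local $\mathrm{W}_1$ dual certificate using the lower bound $p_0 \geq c_0$ on $Q$, and checking the second-order expansion of KL---are routine.
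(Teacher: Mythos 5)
Your proposal is correct and follows essentially the same route as the paper's proof: a standard Gaussian base, disjoint-support bump perturbations at scale $h \asymp n^{-1/(2\beta+d)}$ with amplitude $\asymp h^\beta$, a Varshamov--Gilbert packing, a Lipschitz dual certificate giving $\mathrm{W}_1$-separation $\gtrsim h^{\beta+1}$, a KL bound $\lesssim h^{2\beta}$, and a Fano--Birgé argument. The only cosmetic differences are that you perturb the exponent $a$ directly rather than the density (the paper perturbs additively and recovers $a_\varepsilon = \log(g_\varepsilon/g_0) \in \mathcal{H}^\beta_{C_K}$ via Faa di Bruno and Proposition~\ref{prop:mixtures-in-model}), you use localized $\mathrm{W}_1$ dual certificates rather than the paper's single global test function $f_\varepsilon$, and you bound pairwise KL rather than KL against the reference $g_0$; none of this changes the substance.
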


The proof of Theorem~\ref{thm:borneinf} builds on the argument developed in~\cite[Theorem~4]{Uppal19}, with minor adaptations to account precisely for the model $\mathcal{A}^\beta_K$.
For completeness, these modifications are detailed in Section~\ref{sec:proof_of_proposition-borneinf}.

\subsection{Perspectives}\label{sec:perspective}
Overall, our results contribute to the theoretical understanding of score-based generative models by providing a streamlined proof strategy and opening new directions for generalization. In particular, we demonstrate that regularity of the score function is sufficient to achieve minimax convergence rates, and that such regularity naturally arises under broad conditions on the data distribution. Remarkably, it suffices for the score to exhibit high-order Hölder regularity on high-probability sets, up to logarithmic factors. This observation suggests that Hölder smoothness may not be the most intrinsic notion of regularity to control; instead, the score function might possess Sobolev-type regularity with respect to the law $\vect{p}{t}$. 

This perspective gives hope that regularity results for score functions could extend to settings where the data distribution lies near lower-dimensional structures, such as manifolds. In particular future work, could be dedicated to study the following conjecture.

\begin{conjecture}
Let $P^\star$ be a probability measure with density $p^\star \in \mathcal{H}^\beta_K$ with respect to the volume measure of a $\mathcal{C}^{\beta+1}$ compact submanifold with reach bounded below. If the density of $p^\star$ is bounded away from zero, the score function associated to $p^\star$ satisfies, for all $t>0$,
$$ s^\star(t,\cdot) \in W^{\beta+\gamma,2}_{Ct^{-\frac{1+\gamma}{2}}}(\mathbb{R}^d,\vect{p}{t}),$$
where $W^{\beta+\gamma,2}(\mathbb{R}^d,\vect{p}{t})$ stands for the $L^2(p_t)$-Sobolev space of regularity $\beta+\gamma$.
\end{conjecture}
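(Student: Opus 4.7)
The plan is to generalize the machinery behind Theorem~\ref{coro:regularityp} to the submanifold setting, while trading pointwise Hölder control on high-probability subsets for global $L^2(\vect{p}{t})$-weighted Sobolev control. Let $M$ be the support submanifold, of intrinsic dimension $d'$. The starting point is Tweedie's identity and its higher-order analogues: writing $\mu_t(x) := \mathbb{E}[Y\mid \vect{X}{t}=x]$ with $Y\sim p^\star$, one has $s^\star(t,x)=-\sigma_t^{-2}(x-e^{-t}\mu_t(x))$ and, by successive Gaussian integrations by parts, all derivatives $\nabla^{k} s^\star(t,x)$ unfold as polynomial combinations of \emph{posterior cumulants} of $Y\mid \vect{X}{t}=x$, weighted by inverse powers of $\sigma_t^2$. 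Reducing the claim to bounds on these posterior cumulants in $L^2(\vect{p}{t})$ is the first step.

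The second step is to exploit the manifold geometry locally. Because $M$ has reach bounded below, there is a tubular neighborhood on which $x\mapsto \pi_M(x)$ is well-defined and smooth; on each small patch we parametrize $M$ over its tangent space as the graph of a $\mathcal{C}^{\beta+1}$ function, and decompose $x=\pi_M(x)+n(x)$ with $n(x)$ normal. In those coordinates the posterior of $Y$ given $\vect{X}{t}=x$ concentrates on a region of tangential size $O(\sqrt{t})$ and normal size $O(t)$ (controlled by the second fundamental form of $M$). The $\beta$-Hölder regularity of $p^\star$ on $M$, together with the $\mathcal{C}^{\beta+1}$ regularity of the chart, lets me integrate by parts $\beta$ times along $M$ and transfer $\beta$ orders of smoothness from the Gaussian kernel onto $p^\star$ at the price of a constant depending only on $K$ (this parallels the density-smoothing arguments underlying \eqref{eq:reguscorespace}). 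The remaining $\gamma$ ambient derivatives must then be produced by differentiating the Gaussian heat kernel itself. In the full-dimensional regime of Theorem~\ref{coro:regularityp}, each such derivative costs a factor $\sigma_t^{-1}\asymp t^{-1/2}$; a careful count yields the desired $t^{-(1+\gamma)/2}$ scaling, with the ``$+1$'' in the exponent coming from the one intrinsic gradient in the definition $s^\star=\nabla\log \vect{p}{t}$ that is \emph{not} absorbed by $p^\star$.

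The crucial — and delicate — third step is to turn these pointwise-looking estimates into a genuine $L^2(\vect{p}{t})$-Sobolev bound, which is where the submanifold setting departs from the existing proof of Theorem~\ref{coro:regularityp}. Pointwise, $\nabla^k s^\star(t,x)$ blows up in the normal direction as $t\to 0$ because $\vect{p}{t}$ has a sharply peaked Gaussian profile of width $\sqrt{t}$ transverse to $M$. However, those regions where the score derivative is large carry vanishing $\vect{p}{t}$-mass, and the $L^2(\vect{p}{t})$ weighting is designed exactly to tame this. Concretely, I would express $\int |\nabla^k s^\star(t,x)|^2 \vect{p}{t}(x)\,\mathrm{d}x$ as a Wiener-space expectation using $\vect{X}{t}=e^{-t}Y+\sigma_t Z$, apply Malliavin integration by parts to trade ambient derivatives in $x$ against divergences in the Gaussian noise $Z$, and verify that each divergence contributes at most $\sigma_t^{-1}$ in $L^2$. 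Normal-direction cancellations should then convert the pointwise $O(t^{-?})$ divergences into integrable contributions whose $L^2(\vect{p}{t})$-norms scale as $t^{-(1+\gamma)/2}$, independently of the codimension $d-d'$.

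The last step is globalization: cover $M$ by finitely many tubular patches, use a $\mathcal{C}^\infty$ partition of unity on $M$ that pulls back to the ambient tubular neighborhoods, and add the local Sobolev norms. The hard part is unquestionably the normal-direction analysis of Step 3: reach alone guarantees a Fermi-type chart but does not a priori control how curvature interacts with ambient ambient higher-order derivatives, and the right tool will likely be Malliavin calculus combined with sharp heat-kernel bounds on the normal bundle — a framework which, if made to work, would explain why the Sobolev exponent $t^{-(1+\gamma)/2}$ depends only on the intrinsic Hölder index $\beta$ of $p^\star$ and the target smoothness $\gamma$, and not on the ambient or intrinsic dimension.
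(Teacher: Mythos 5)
This statement is a \emph{conjecture} in the paper, not a proved result: the authors explicitly defer it to future work, so there is no paper proof to compare your proposal against. What you have written is an informed road-map, not a proof, and you yourself flag the gap — Step 3, where pointwise estimates must be converted into genuine $L^2(\vect{p}{t})$-Sobolev bounds via Malliavin integration by parts and normal-direction cancellations, is left at the level of ``if made to work.'' As it stands, the crucial claim that each Malliavin divergence contributes only a factor $\sigma_t^{-1}$ in $L^2$, and that the resulting scaling is $t^{-(1+\gamma)/2}$ independently of the codimension, is an unverified assertion, not a derivation. Likewise the transfer of $\beta$ orders of smoothness by integrating by parts along $M$ needs a quantitative argument showing that the curvature terms and the $\mathcal{C}^{\beta+1}$ chart derivatives do not introduce extra negative powers of $t$ in the normal directions.

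That said, your outline is coherent and consistent with the mechanism behind Theorem~\ref{coro:regularityp} in the flat case: Tweedie/posterior-cumulant expansions, local Fermi charts justified by the reach lower bound, Hölder smoothing of $p^\star$ against the Gaussian kernel, a $t^{-1/2}$ cost per remaining ambient derivative, and a final partition-of-unity globalization. If you intend to pursue this, the two places where a rigorous argument must be supplied are: (i) a precise higher-order analogue of Tweedie's identity expressing $\nabla^k s^\star(t,\cdot)$ in terms of posterior cumulants together with uniform $L^2(\vect{p}{t})$ moment bounds for these cumulants in the manifold setting; and (ii) a quantitative normal-bundle heat-kernel estimate showing that the $\vect{p}{t}$-weighting cancels the pointwise blow-up transverse to $M$ at exactly the conjectured rate. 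Until those are in place this remains a plausible strategy sketch rather than a proof.
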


The intuition behind the conjecture is that as $t \to 0$, the measure $\vect{p}{t}$ concentrates around the support of $p^\star$, and the score function $s^\star(t, x)$ becomes increasingly regular as $x$
is closer to the manifold.

If the regularity results from~\cite{stephanovitch2025regularity} could be extended to the manifold setting, they would significantly simplify existing proofs of statistical optimality (see Section~\ref{sec:priro-works}) by leveraging the fact that the score adapts not only to the intrinsic dimension of the data distribution but also to  its regularity. We believe that pushing further the analysis of space-time regularity of the score function could lead to a better theoretical understanding of the impressive performance of score-based generative models.

\begingroup
\small

\bibliography{bib}

\endgroup

\appendix
\section{Details for the proof of Theorem~\ref{thm:cv_rates}}
\subsection{Proof of Proposition~\ref{prop:mixtures-in-model}}\label{sec:prop:mixtures-in-model}
\begin{proof}[Proof of Proposition~\ref{prop:mixtures-in-model}]
By assumption, there exists $C_A>0$ such that for all $x\notin \B(0,C_A)$ we have that $-\nabla^2 \log(q(x))\succeq (2A)^{-1} I_{d \times d}$, $-\langle \nabla \log(q(x)),x\rangle \geq (2A)^{-1}\|x\|^2 $ and $-\log(q(x))\geq (4A)^{-1}\|x\|^2$. Let $l:[-1,C_A+1]\rightarrow [0,1]$ be a smooth non-decreasing function such that $l([-1,0])=0$ and $l([C_A,C_A+1])=1$. Defining,
\begin{align*}
u(x)=\left\{\begin{array}{ll} (4A)^{-1}\|x\|^2 & \text{ if } \|x\|\leq C_A\\
l(\|x\|-C_A)(-\log(q(x)))+(1-l(\|x\|-C_A))(4A)^{-1}\|x\|^2 & \text{ if } C_A \leq \|x\|\leq 2C_A\\
-\log(q(x)) & \text{ if } 2C_A \leq \|x\|,
\end{array}\right.
\end{align*}
one can check that $u$ satisfies $ (4A)^{-1} I_{d \times d}\preceq \nabla^2 u(x)$ for all $x\in \mathbb{R}^d$ straightforwardly.
Then, defining
\begin{align*}
b(x):=\log(q(x))+u(x)+a(x),
\end{align*}
we have that $b$ satisfies Assumption~\ref{assum:1}-1. Therefore, we conclude that $p(x)=q(x)e^{a(x)}=e^{-u(x)+b(x)}$ satisfies Assumption~\ref{assum:1}.

\end{proof}

\subsection{Proof of Proposition~\ref{prop:estimatenormscore}}\label{sec:prop:estimatenormscore}

\begin{proof}[Proof of Proposition~\ref{prop:estimatenormscore}]
Let us write $x^\star := \argmin_{x} u(x)$. From the mean value theorem, we have
\begin{align}\label{eq:fghjiuygtfghj}
    \|s^\star(t,x)\| 
    &\leq \|\nabla s^\star(t,\cdot)\|_{\infty}\|x-x^\star\|+\|s^\star(t,x^\star)\|.
\end{align}
Now, writing $\varphi^{t,x}$ for the probability density of $\mathcal{N}(e^{-t}x,(1-e^{-2t})\sigma^2I_{d \times d})$, and taking $r(y):=p^\star(y)/\varphi^{\infty,0}(y)$, define the probability density
$$p^{t,x}(y) := \frac{r(y)\varphi^{t,x}(y)}{\int r(z) \varphi^{t,x}(z)\mathrm{d}z}.$$
From~\cite[Proposition~4]{stephanovitch2025regularity}, we have for all $x^{'}\in \mathbb{R}^d$,
$$s^\star(t,x^{'})=-x^{'}+\frac{e^{-t}}{1-e^{-2t}}\int (y-e^{-t}x^{'}) p^{t,x^{'}}(\mathrm{d} y)$$
and
\begin{equation}\label{eq:graidentoofscore}
\nabla s^\star(t,x^{'})=-I_{d \times d}+\frac{e^{-2t}}{(1-e^{-2t})^2}\int \left(y-\int z p^{t,x^{'}}(\mathrm{d} z)\right)^{\otimes 2} p^{t,x^{'}}(\mathrm{d} y)-\frac{e^{-2t}}{1-e^{-2t}}I_{d \times d}.
\end{equation}
As,
\begin{align*}
    \int r(z) \varphi^{t,x^\star}(z)\mathrm{d}z\geq \int_{B(x^\star,\sqrt{1-e^{-2t}})} r(z) \varphi^{t,x^\star}(z)\mathrm{d}z\geq C^{-1},
\end{align*}
and $\|x^\star\|\leq C$, we deduce that
\begin{align*}
    \|s^\star(t,x^\star)\| & =\| -x^\star+\frac{e^{-t}}{1-e^{-2t}}\int (y-e^{-t}x^\star)p^{t,x^\star}( \mathrm{d} y)\|\\
    & \leq C \left(1+t^{-1}\int \|y-e^{-t}x^\star\|r(y)\varphi^{t,x^\star}(y)\mathrm{d} y\right)
    \\
    & \leq C(1+t^{-1}).
\end{align*}
On the other hand, we have from~\eqref{eq:graidentoofscore} that
$$\inf_{x^{'}\in \mathbb{R}^d} \lambda_{\min}(\nabla s^\star(t,x^{'})) \geq -1-C e^{-2t}(1 + t^{-1}) .
$$
Putting this bound together with the estimate of Proposition~\ref{prop:lipregu}, we deduce that
$$\| \nabla s^\star(t,\cdot)\|_{\infty} \leq C (1 + t^{-1}).$$
Therefore, we obtain from~\eqref{eq:fghjiuygtfghj} that
\begin{align*}
    \|s^\star(t,x)\|& \leq C(1+(1+t^{-1})\|x-x^\star\|)\\
    & \leq C(1+t^{-1})(1+\|x\|),
\end{align*}
which concludes the proof.
\end{proof}

\subsection{Proof of Lemma~\ref{lem:AbetaimpliesHbeta}}\label{sec:lem:AbetaimpliesHbeta}

\begin{proof}[Proof of Lemma~\ref{lem:AbetaimpliesHbeta}]

Under the assumption that $p^\star\in \mathcal{A}_K^\beta$, the score $s^\star(t,x) = \nabla \log \vect{p}{t}(x)$ is regular, in the sense that it satisfies~\eqref{eq:estimatenormscore} from Proposition~\ref{prop:estimatenormscore},~\eqref{eq:onesidedlipscoreassum} from Proposition~\ref{prop:lipregu}, as well as~\eqref{eq:reguscoretime} and~\eqref{eq:reguscorespace} from Theorem~\ref{coro:regularityp}. Let us now show that it satisfies points 1. and 2. of Properties~\hyperref[{assump:scoreregu}]{$(\mathcal{P}_\beta)$}. 

As given in~\cite[Section~3.1.1]{stephanovitch2025regularity}, we have
\begin{align*}%\label{align:reufebhehibsjsd}
A_t^\varepsilon:= 
\begin{cases}
A_0^\varepsilon 
= \{ y \mid u(y) \leq C (\log(\varepsilon^{-1}) + K +1 )^K\} 
& \text{if } t\leq C_\star^{-1}\log(\varepsilon^{-1})^{-C_\star}
\\
A_\infty^\varepsilon 
 = \{ y \mid \|y -y^\star\| \leq C \log(\varepsilon^{-1})(1+K)\} & \text{if }  t> C_\star^{-1}\log(\varepsilon^{-1})^{-C_\star},
    \end{cases}
\end{align*}
corresponding to the set  given by Theorem~\ref{coro:regularityp}. Now by Assumption~\ref{assum:1}.2 we have that $\mathbb{P}_{X \sim p^\star} (A_\infty^\varepsilon)\geq 1-\varepsilon$, so we only need to check Property~\hyperref[{assump:scoreregu}]{$(\mathcal{P}_\beta)$}.2.

First of all, the case $t> C_\star^{-1}\log(\varepsilon^{-1})^{-C_\star}$ is immediate.
Now, in the case $t\leq C_\star^{-1}\log(\varepsilon^{-1})^{-C_\star}$, for $x\in A_{t}^{\varepsilon}$ and $y\in \mathbb{R}^d$ such that
$$\|x-y\|\leq (KC (\log(\varepsilon^{-1}) + K +1 )^K)^{-K},$$
we have from Assumption~\ref{assum:1}.4 that
for all $r\in [0,1]$,
$$\| \nabla u(x+r(y-x))\|\leq K(1+(\log(\varepsilon^{-1}) + K +1 )^{K^2}).$$
Then, from Taylor expansion we deduce that
\begin{align*}
\|u(y)\|\leq C (\log(\varepsilon^{-1}) + K +1 )^K+ C_3\log(\varepsilon^{-1})^{C_4}\|x-y\|.
\end{align*}
Therefore, there exists $C_1^\star,C_2^\star>0$ such that for any , if $\|x-y\|\leq  (C_1^\star)^{-1}\log\left(\varepsilon^{-1}\right)^{-C_2^\star},$ we have
\begin{align*}
\|u(y)\|
&\leq C (\log(\varepsilon^{-2}) + K +1 )^K
\end{align*}
so $y\in A_{t}^{\varepsilon^{2}}$.
We can then conclude that for $p^\star\in \mathcal{A}_K^\beta$, the score function satisfies Properties~\hyperref[{assump:scoreregu}]{$(\mathcal{P}_\beta)$}.
\end{proof}

\subsection{Proof of Lemma~\ref{lem:SDEstability-varying-drift}}
\label{sec:lem:SDEstability-varying-drift}

\begin{proof}[Proof of Lemma~\ref{lem:SDEstability-varying-drift}]
Given an initial condition $x\in \mathbb{R}^d$, we have
\begin{align*}
    \mathbb{E}\Big[\|X_{\overline{\tau}}\| \mid X_{\underline{\tau}}= x\Big]
    &=
    \mathbb{E}\Big[ \|x+\int_{\underline{\tau}}^{\overline{\tau}} a_t(X_t) \mathrm{d}t+\int_{\underline{\tau}}^{\overline{\tau}} 
    r_t \mathrm{d}B_{t}\|\mid X_{\underline{\tau}}= x\Big]
    \\
    &\leq 
    \|x\|+(\overline{\tau}-\underline{\tau})V+\sqrt{d}M.
\end{align*}
On the other hand, by the $L$-subGaussian property of $p_{\underline{\tau}}$, we have that for all $0 < \varepsilon \leq 1$, 
\begin{align*}
    \int_{\B\left(0,\sqrt{L \log(1/\varepsilon)}\right)^c}
    \|x\|
    p_{\underline{\tau}}(x) \mathrm{d}x
    =
    \int_{\sqrt{L \log(1/\varepsilon)}}^\infty
    \delta
    \mathbb{P}
    (
    \| X_{\underline{\tau}} \| \geq \delta
    )
    \mathrm{d}\delta
    &\leq
    \int_{\sqrt{L \log(1/\varepsilon)}}^\infty
    \delta e^{-\delta^2/L}
    \mathrm{d} \delta
    = L\varepsilon/2
    ,
\end{align*}
with same expression for $\widetilde{p}_{\underline{\tau}}$.
As a result, we obtain
\begin{align*}
\mathrm{W}_1(p_{\overline{\tau}},\widetilde{p}_{\overline{\tau}})
&=
\sup_{f\in \text{Lip}_1}\int \mathbb{E}\Big[f(X_{\overline{\tau}}) \mid X_{\underline{\tau}}=x\Big]p_{\underline{\tau}}(x) \mathrm{d}x-\int \mathbb{E}\Big[f(\widetilde{X}_{\overline{\tau}})|\widetilde{X}_{\underline{\tau}}=x\Big]\widetilde{p}_{\underline{\tau}}(x) \mathrm{d}x\\
&=\sup_{f\in \text{Lip}_1,f(0)=0}\int \mathbb{E}\Big[f(X_{\overline{\tau}}) \mid X_{\underline{\tau}}=x\Big](p_{\underline{\tau}}(x)-\widetilde{p}_{\underline{\tau}}(x)) \mathrm{d}x\\
& \leq
\int_{\R^d} \mathbb{E}\Big[\|X_{\overline{\tau}}\| \mid X_{\underline{\tau}}=x\Big]\big|p_{\underline{\tau}}(x)-\widetilde{p}_{\underline{\tau}}(x)| \mathrm{d}x
\\
& \leq
\int_{\R^d}(\|x\|+(\overline{\tau}-\underline{\tau})V+\sqrt{d}M)|p_{\underline{\tau}}(x)-\widetilde{p}_{\underline{\tau}}(x)|
(
\mathds{1}_{x\in \B(0,\sqrt{L\log(\varepsilon^{-1})})}
+
\mathds{1}_{x\notin \B(0,\sqrt{L\log(\varepsilon^{-1})})}
) 
\mathrm{d}x
\\
&\leq
\bigl(
    (\overline{\tau}-\underline{\tau})V+\sqrt{d}M
\bigr)
\|p_{\underline{\tau}}-\widetilde{p}_{\underline{\tau}}\|_{L^1}
+
\bigl(
    \sqrt{L\log(\varepsilon^{-1})}
    \|p_{\underline{\tau}}-\widetilde{p}_{\underline{\tau}}\|_{L^1}
    +
    L\varepsilon/2
\bigr)
,
\end{align*}
which concludes the proof.
\end{proof}

\subsection{Proof of Theorem~\ref{thm:stabilitysde}}
\label{sec:thm:stabilitysde}

\begin{proof}[Proof of Theorem~\ref{thm:stabilitysde}]
Consider the synchronous coupling given by initial condition $ X_{\underline{\tau}} =  \bar{X}_{\underline{\tau}} \sim  p_{\underline{\tau}}$.
Then for all $x\in \mathbb{R}^d$, we have
\begin{align}\label{align:boundWdiffdrift}
\mathrm{W}_1(p_{\overline{\tau}}, \bar{p}_{\overline{\tau}})&=\sup_{f\in \text{Lip}_1}\int f(x)\mathrm{d}p_{\overline{\tau}}(x)-\int f(x)\bar{p}_{\overline{\tau}}(x) \mathrm{d}x\nonumber
\\
&=\sup_{f\in \text{Lip}_1}\int \mathbb{E}\Big[f(X_{\overline{\tau}})-f(\bar{X}_{\overline{\tau}})\big|X_{\underline{\tau}}=x,\bar{X}_{\underline{\tau}}=x\Big]p_{\underline{\tau}}(x)\mathrm{d}x\nonumber\\
& \leq\int \mathbb{E}\Big[\|X_{\overline{\tau}}-\bar{X}_{\overline{\tau}}\|\big|X_{\underline{\tau}}=x,\bar{X}_{\underline{\tau}}=x\Big]p_{\underline{\tau}}(x) \mathrm{d}x.
\end{align}
Now, using the stochastic Itô-Alekseev-Gröbner formula~\cite[Theorem~1.1]{hudde2024ito}, we have
\begin{align*}
\bar{X}_{\overline{\tau}}- X_{\overline{\tau}} =  \int_{\underline{\tau}}^{\overline{\tau}}\nabla \bar{X}_{s,\overline{\tau}}(X_s)(\bar{a}_s(X_s)-a_s(X_s))\mathrm{d}s,
\end{align*}
where for all $y \in \R^d$, the process $(\bar{X}_{s,\overline{\tau}}(y))_s$ is solution to the equation
$$\bar{X}_{s,\overline{\tau}}(y)
=
y
+
\int_s^{\overline{\tau}} \bar{a}_t(\bar{X}_{t,\overline{\tau}}(y))\mathrm{d}t+ \int_s^{\overline{\tau}} r_t \mathrm{d}B_t.
$$
Because $\bar{a}$ is locally Lipschitz, $\bar{X}_{s,\overline{\tau}}$ is differentiable and 
\begin{align*}
\nabla \bar{X}_{s,\overline{\tau}}(y)=
I_{d \times d}
+
\int_s^{\overline{\tau}} 
\nabla 
\bar{a}_t(\bar{X}_{t,\overline{\tau}}(y))
\bar{X}_{t,\overline{\tau}}(y)
\mathrm{d}t
.
\end{align*}
Given a vector $v \in \R^d$, consider $R(s) := \|\nabla \bar{X}_{\overline{\tau}-s,\overline{\tau}}(y)v\|^2$. Then, we have
\begin{align*}
    \partial_s R(s) 
    &= 
    2\left\langle \nabla \bar{X}_{\overline{\tau}-s,\overline{\tau}}(y)v
    , 
    - \partial_s \nabla \bar{X}_{\overline{\tau}-s,\overline{\tau}}(y)v \right\rangle
    \\
    &=
    2\left\langle 
    \nabla \bar{X}_{\overline{\tau}-s,\overline{\tau}}(y)
	v    
	, 
    \nabla \bar{a}_{\overline{\tau}-s}(\bar{X}_{\overline{\tau}-s
	,
    \overline{\tau}}) 
    \nabla \bar{X}_{\overline{\tau}-s,\overline{\tau}}(y) v
    \right\rangle
    \\
    &\leq  2 \lambda_{\max}(\nabla\bar{a}_{\overline{\tau}-s}(\bar{X}_{\overline{\tau}-s,\overline{\tau}}))R(s)
    \\
    &\leq  2 L_{\overline{\tau}-s}R(s)
    .
\end{align*}
Since $R(0) = \|\nabla \bar{X}_{\overline{\tau},\overline{\tau}}(y)v\|^2 = \| v \|^2$ Gronwall's inequality yields
$$R(s)\leq \| v \|^2 e^{2 \int_{0}^s \bar{L}_{\overline{\tau}-u}\mathrm{d}u}
,
$$
or equivalently,
$$\|\nabla \bar{X}_{s,\overline{\tau}}(y)v\|\leq
e^{\int_s^{\overline{\tau}} \bar{L}_u\mathrm{d}u}
\| v \|
.
$$
As a result, we obtain
\begin{align*}
 \|X_{\overline{\tau}}-\bar{X}_{\overline{\tau}}\| & \leq \int_{\underline{\tau}}^{\overline{\tau}}\|\nabla \bar{X}_{s,\overline{\tau}}(X_s)\|\|(a_s(X_s)-\bar{a}_s(X_s)\|\mathrm{d}s
 \\
 & \leq
\int_{\underline{\tau}}^{\overline{\tau}}e^{\int_s^{\overline{\tau}} \bar{L}_u\mathrm{d}u}\|(a_s(X_s)-\bar{a}_s(X_s)\|\mathrm{d}s,
\end{align*}
which combined with~\eqref{align:boundWdiffdrift}
yields
\begin{align*}
    \mathrm{W}_1(p_{\overline{\tau}}, \bar{p}_{\overline{\tau}})
& \leq\int \mathbb{E}\Big[\|X_{\overline{\tau}}-\bar{X}_{\overline{\tau}}\|\big|X_{\underline{\tau}}=x,\bar{X}_{\underline{\tau}}=x\Big]p_{\underline{\tau}}(x)\mathrm{d}x\\
&\leq
\int \mathbb{E}\Big[\int_{\underline{\tau}}^{\overline{\tau}}e^{\int_s^{\overline{\tau}} \bar{L}_u\mathrm{d}u}\|a_s(X_s)-\bar{a}_s(X_s)\|\mathrm{d}s\big|X_{\underline{\tau}}=x,\bar{X}_{\underline{\tau}}=x\Big]p_{\underline{\tau}}(x)\mathrm{d}x\\
&=
\int_{\underline{\tau}}^{\overline{\tau}} \int e^{\int_s^{\overline{\tau}} \bar{L}_u\mathrm{d}u}\|a_s(X_s)-\bar{a}_s(X_s)\|p_{s}(x)
\mathrm{d}x
\mathrm{d}s.
\end{align*}
\end{proof}

\subsection{Proof of Proposition~\ref{prop:tradeoffbv}}\label{sec:prop:tradeoffbv}
The proof of Proposition~\ref{prop:tradeoffbv} is based on the following deviation result.
\begin{proposition}[A clipped deviation bound for empirical risk minimizers]
\label{prop:Bernstein_union}
Let $p^\star$ be a distribution on $\R^d$, and let $A_n$ be such that
\begin{align*}
    \P_{p^\star}(A_n) \geq 1-n^{-4}.
\end{align*}
For a class of function $\mathcal{F}$ on $\R^d$, we let $c_n$ and $v$ be such that
\begin{align*}
    \sup_{x \in A_n,f \in \mathcal{F}} |f(x)| \leq c_n
    ,
    \text{\vspace{2em} and \vspace{2em}}
    \sup_{f \in \mathcal{F}} \E_{X \sim p^\star} f^2(X) \leq v
    .
\end{align*}
For all $f \in \mathcal{F}$, we denote by $f_c$ its clipped version, that is
\begin{align*}
    f_c(x) := \mathrm{sg}(f(x))(|f(x)| \wedge c_n).
\end{align*}
For $\varepsilon >0$, recall that $\mathcal{N}(\mathcal{F},\|\cdot\|_\infty, \varepsilon)$ denote the covering number of $\mathcal{F}$ at scale $\varepsilon$, in terms of sup-norm. Let $X^{(n)}, \hdots, X^{(n)}$ be an i.i.d. sample drawn from $p^\star$, and let 
\begin{align*}
    \bar{f} & \in \arg\min_{f \in \mathcal{F}} \E_{X \sim p^\star}(f(X)), \\
    \widehat{f} & \in \arg\min_{f \in \mathcal{F}} \E_n(f),
\end{align*}
where $\E_n(f)$ denotes expectation of $f$ with respect to sample distribution.
Then, for any $t$, $\varepsilon$ and $\delta >0$, with probability larger than $1-n^{-3} - 2e^{-t}$, it holds
\begin{align*}
     \E_{X\sim p^\star} \widehat{f}(X) 
    & \leq \E_{X\sim p^\star} \bar{f}(X) +  \delta(\mathrm{Var}_{X \sim p^\star} (\bar{f}_c) + \mathrm{Var}_{X \sim p^\star} (\widehat{f}_c)) + 2\sqrt{v}n^{-2} \\
    & \quad +  2 \left (c_n + \frac{1}{\delta} \right ) \frac{t+\log \left (\mathcal{N}(\mathcal{F},\|\cdot\|_\infty, \varepsilon))\right )}{n} + 4 \varepsilon + 16 \varepsilon^2.  
\end{align*}
Moreover, it holds
\begin{multline*}
     \E_{\mathbb{X}^{(n)}} \left [ \E_{X\sim p^\star} \widehat{f}(X) - \E_{X\sim p^\star} \bar{f}(X) - \delta \mathrm{Var}_{X \sim p^\star}(\widehat{f}_c)) \right ]  \leq \delta \mathrm{Var}_{X \sim p^\star} (\bar{f}_c) \\ 
     + 6 \left (c_n + \frac{1}{\delta} \right ) \frac{\log \left (\mathcal{N}(\mathcal{F},\|\cdot\|_\infty, \varepsilon)) \vee n\right )}{n} 
     \quad + 5 \sqrt{v}n^{-2}  + 4 \varepsilon + 16 \varepsilon^2,
\end{multline*}
where $\E_{\mathbb{X}^{(n)}}$ denotes the expectation with respect to the randomness of data $\mathbb{X}^{(n)} = \{X^{(1)},\ldots,X^{(n)}\}$.
\end{proposition}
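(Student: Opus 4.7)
The proof follows a classical peeling/Bernstein recipe for empirical risk minimizers, adapted to handle functions in $\mathcal{F}$ that are uniformly controlled only on the high-probability set $A_n$ rather than globally. The plan is to introduce the clipped class $\{f_c\}_{f\in\mathcal{F}}$ as a bridge: on $A_n$ the original and clipped functions agree, so almost all data contribute identically to the empirical loss, while $f_c$ is globally bounded by $c_n$ and thus amenable to Bernstein's inequality with variance scaling. A sup-norm covering of $\mathcal{F}$ combined with the fact that the clipping map $y \mapsto \mathrm{sg}(y)(|y|\wedge c_n)$ is $1$-Lipschitz transfers the approximation guarantee to the clipped class at no additional cost.

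\textbf{Key steps.} First, I would quantify the clipping bias at the population level: since $f$ and $f_c$ coincide on $A_n$, Cauchy-Schwarz gives $|\E f - \E f_c| \leq \sqrt{v\,\P(A_n^c)}\leq \sqrt{v}\,n^{-2}$ for every $f\in\mathcal{F}$. Second, at the empirical level, a union bound yields $\P(\exists i,\ X^{(i)}\notin A_n)\leq n\cdot n^{-4}=n^{-3}$, so on the event $\Omega_0$ of probability $\geq 1-n^{-3}$ one has $\E_n f = \E_n f_c$ uniformly over $\mathcal{F}$. Third, for fixed $f\in\mathcal{F}$, Bernstein's inequality applied to $f_c(X)-\E f_c$ (bounded by $2c_n$, variance $\leq \mathrm{Var}(f_c)$) combined with $\sqrt{2ab}\leq \delta a+b/(2\delta)$ gives
\begin{equation*}
|\E_n f_c - \E f_c|\leq \delta\,\mathrm{Var}(f_c)+2\bigl(c_n+\tfrac{1}{\delta}\bigr)\tfrac{t}{n}
\end{equation*}
with probability $\geq 1-2e^{-t}$. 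Fourth, union-bounding this over an $\varepsilon$-net $\mathcal{N}_\varepsilon$ of $\mathcal{F}$ of cardinality $N:=\mathcal{N}(\mathcal{F},\|\cdot\|_\infty,\varepsilon)$ replaces $t$ by $t+\log N$. Fifth, for arbitrary $f\in\mathcal{F}$, pick $f'\in\mathcal{N}_\varepsilon$ with $\|f-f'\|_\infty\leq \varepsilon$; by $1$-Lipschitzness of clipping, $\|f_c-f'_c\|_\infty\leq \varepsilon$, hence $|\E_n(f_c-f'_c)|\leq \varepsilon$, $|\E(f_c-f'_c)|\leq \varepsilon$, and $|\mathrm{Var}(f_c)-\mathrm{Var}(f'_c)|\leq 2\varepsilon(|\E f_c|+|\E f'_c|)+2\varepsilon^2\leq 4\varepsilon+16\varepsilon^2$ after absorbing constants through a clipped/unclipped second-moment comparison, producing the $4\varepsilon+16\varepsilon^2$ slack in the final statement.

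\textbf{Assembly and expectation bound.} Chaining these bounds along the standard ERM chain
\begin{equation*}
\E\widehat{f}\leq \E\widehat{f}_c+\tfrac{\sqrt{v}}{n^2}\leq \E_n\widehat{f}_c+\delta\mathrm{Var}(\widehat{f}_c)+\cdots = \E_n\widehat{f}+\cdots\leq \E_n\bar{f}+\cdots\leq \E\bar{f}+\delta\mathrm{Var}(\bar{f}_c)+\cdots+\tfrac{\sqrt{v}}{n^2},
\end{equation*}
where the two equalities use step (ii) on $\Omega_0$, the middle inequality is by definition of $\widehat{f}$, and the remaining inequalities apply the uniform Bernstein bound to both $\widehat{f}$ and $\bar{f}$, yields the stated high-probability estimate. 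For the expectation bound, I would choose $t=2\log n$ so that $e^{-t}\leq n^{-2}$; on the complementary event of probability $\leq n^{-3}+2n^{-2}$ one controls the quantity by $|\E\widehat{f}|+|\E\bar{f}|\leq 2\sqrt{v}$ via Jensen and the $L^2$-bound, contributing at most $5\sqrt{v}\,n^{-2}$ after integration, and the $(c_n+1/\delta)(t+\log N)/n$ term becomes $(c_n+1/\delta)(\log(N\vee n))/n$ up to constants.

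\textbf{Main obstacle.} The delicate step is the transfer in step five: sup-norm closeness of $f$ and $f'$ implies sup-norm closeness of $f_c$ and $f'_c$, but one must also bound the discrepancy between their variances without reintroducing a factor of $c_n$ that would ruin the variance-adaptive Bernstein gain. Carefully using $|\E f_c^2-\E(f'_c)^2|\leq \E|f_c-f'_c|(|f_c|+|f'_c|)$ together with the clipping-induced bound $|f_c|+|f'_c|\leq 2c_n$ keeps the slack of order $\varepsilon+\varepsilon^2$ as stated, but this bookkeeping must be arranged so that the variance terms $\mathrm{Var}(\widehat{f}_c)$ and $\mathrm{Var}(\bar{f}_c)$ (and not $v$) appear on the right-hand side, which is what makes the oracle inequality of Proposition~\ref{prop:tradeoffbv} sharp.
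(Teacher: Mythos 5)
Your overall strategy matches the paper's: replace each $f$ by its clipped version $f_c$, exploit that $f = f_c$ on the high-probability event where every data point lies in $A_n$, apply Bernstein to the bounded $f_c$ with a $\sqrt{2ab}\le \delta a + b/(2\delta)$ split, union-bound over a sup-norm $\varepsilon$-net using the $1$-Lipschitzness of the clipping map, and connect $\E f$ to $\E f_c$ via a Cauchy--Schwarz estimate on $A_n^c$ (the paper's Lemma~\ref{lem:connection_clip_unclip}). Your ERM chain and your choice $t = 2\log n$ for the expectation bound are exactly the paper's.

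There is, however, a genuine gap in the variance-transfer step that you flag as the ``main obstacle.'' You claim the bound
\[
\bigl|\mathrm{Var}(f_c)-\mathrm{Var}(f'_c)\bigr|\;\le\; 2\varepsilon\bigl(|\E f_c|+|\E f'_c|\bigr)+2\varepsilon^2 \;\le\; 4\varepsilon+16\varepsilon^2,
\]
but the middle expression is of order $\varepsilon c_n$ (or at best $\varepsilon\sqrt{v}$, since $|\E f_c|\le\sqrt{v}$), not $O(\varepsilon)$. Nothing in the hypotheses forces $c_n$ or $\sqrt{v}$ to be $O(1)$ --- indeed in the downstream application $c_n$ is polylogarithmic in $n$ --- so the additive differencing of variances reintroduces exactly the large factor that the variance-adaptive Bernstein bound was meant to avoid, and your stated slack $4\varepsilon+16\varepsilon^2$ does not follow. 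The paper sidesteps this by using a \emph{multiplicative} rather than additive comparison: for any two random variables $X,Y$, $\mathrm{Var}(X)\le 2\,\mathrm{Var}(Y)+2\E(X-Y)^2$, which with $\|f_c-f'_c\|_\infty\le\varepsilon$ yields $\mathrm{Var}(f'_c)\le 2\,\mathrm{Var}(f_c)+2\varepsilon^2$ with no dependence on $c_n$ or $v$. The price is a factor $2$ on $\delta$, which is then absorbed by relabeling $2\delta\mapsto\delta$ in the final statement (note the coefficient $c_n+1/(2\delta)$ becoming $c_n+1/\delta$). With this substitution your proof closes; without it, the step as written is false.
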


The proof of Proposition~\ref{prop:Bernstein_union} is postponed to Section~\ref{sec:proof_prop_Bernstein_union}.
As in~\cite{schmidt2020nonparametric} and~\cite{oko2023diffusion}, the proof of Proposition~\ref{prop:Bernstein_union} relies on Bernstein's deviation inequality, combined with a chaining argument.

In the particular context of denoising score matching with $p^\star \in \mathcal{A}_K^\beta$, Proposition~\ref{prop:tradeoffbv} will use the following setup.
Take $\mathcal{S} \subset \mathcal{H}_V^0$, and let $s\in \mathcal{S}$.
Fix time horizons $n^{-1} \leq \underline{\tau} < \overline{\tau}$. 
Recall that the denoising score matching contrast $\gamma_{\underline{\tau},\overline{\tau}}(s,x)$ is defined by~\eqref{eq:contrast}.
Define the excess contrast by
\begin{align}
\label{eq:excess-contrast}
    \nu(s,x) 
    := \gamma_{\underline{\tau},\overline{\tau}}(s,x) - \gamma_{\underline{\tau},\overline{\tau}}(s^\star,x).
\end{align}
According to Theorem~\ref{thm:Vincent}, we have
\begin{align*}%\label{eq:def_centered_contrast}
    \E_{X \sim p^\star} [ \nu(s,X) ] = \int_{\underline{\tau}}^{\overline{\tau}} \int_{\R^d} \|s(t,x) - s^\star(t,x)\|^2\vect{p}{t}(x)\mathrm{d} x \mathrm{d} t,
\end{align*}
so that $s^\star$ minimizes the expected excess contrast over all measurable functions $s : [\underline{\tau},\overline{\tau}] \times \R^d \to \R^d$. 
To prove Proposition~\ref{prop:tradeoffbv}, a relevant strategy is then to apply  Proposition~\ref{prop:Bernstein_union} to the family functions $\mathcal{F}_{\mathcal{S}} = \{\nu(s,.)\}_{s \in \mathcal{S}}$. 
This leads us to upper bound their variances and sup-norms — the objective of the next lemma.

\begin{lemma}\label{lem:supnorm_var_contrast}
Assume that $p^\star$ is $K$-subGaussian, and that $s^\star$ satisfies Properties~\hyperref[{assump:scoreregu}]{$(\mathcal{P}_\beta)$}. 
Let $\mathcal{S} \subset \mathcal{H}_V^0$, $ \sigma >0$, and $\nu(s,x)$ be defined as in~\eqref{eq:excess-contrast}. Then there exists $A_n \subset \B(0, C \log(n)^C) $ such that $\P_{p^\star}(A_n) \geq 1-n^{-4}$ and, for any $\underline{\tau} \geq n^{-1}$, 
\begin{align*}
    \sup_{s \in \mathcal{S},x \in A_n} | \nu(s,x) | \leq C(\overline{\tau}-\underline{\tau})\log(n)^C g(\underline{\tau},\sigma,V),
\end{align*}
where $C$ depends on $K$ and $d$ only and  $g(\underline{\tau},\sigma,V) = 1 + V^2 + (1+\sigma+{\sigma^{-1}})^4 (1+\underline{\tau}^{-1}) + \sigma_{\underline{\tau}}^{-2} $. In addition,
\begin{align*}
    \sup_{s \in \mathcal{S}} \E_{X \sim p^\star} \nu^2(s,X) \leq C (\overline{\tau}-\underline{\tau})^2 g^2(\underline{\tau}, \sigma,V),
\end{align*}
where $C$ depends only on $K$. Moreover, it holds
\begin{align*}
    \E_{X \sim p^\star} \nu_c^2(s,X) \leq c_n \E_{X \sim p^\star} \nu(s,X) + c_n^2n^{-4},
\end{align*}
where $c_n = C(\overline{\tau}-\underline{\tau}) \log(n)^C g(\underline{\tau},\sigma,V)$, for $C$ large enough depending on $K$.
\end{lemma}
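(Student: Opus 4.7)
The plan is to exploit the algebraic identity $\|u\|^2 - \|v\|^2 = \langle u - v, u + v \rangle$, applied to $u = s(t,X_t) + Z/\sigma_t$ and $v = s^\star(t,X_t) + Z/\sigma_t$ with $X_t := e^{-t} x + \sigma_t Z$, which yields the factorization
\begin{align*}
    \nu(s,x) = \int_{\underline{\tau}}^{\overline{\tau}} \E_{Z \sim \mathcal{N}(0, I_{d \times d})}\bigl[\langle s(t,X_t) - s^\star(t,X_t),\ s(t,X_t) + s^\star(t,X_t) + 2Z/\sigma_t \rangle\bigr] \, \mathrm{d} t.
\end{align*}
This decomposition is central to all three claims: it separates a \emph{difference factor} $s - s^\star$ (whose squared $L^2(\vect{p}{t})$-norm, integrated in $t$, equals $\E_{X \sim p^\star} \nu(s,X)$ by Vincent's identity, Theorem~\ref{thm:Vincent}) from a \emph{sum factor} $s + s^\star + 2 Z/\sigma_t$, which can be controlled uniformly up to $\mathrm{polylog}(n)$ factors on a large event.

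For the sup-norm bound on $A_n$, I would choose $A_n := \B(0, \sqrt{4 K \log n})$, so that $K$-subGaussianity of $p^\star$ gives $\P_{X \sim p^\star}(A_n^c) \leq n^{-4}$. For $x \in A_n$, Cauchy--Schwarz inside the $Z$-expectation reduces the task to pointwise estimates on $\|s\|$, $\|s^\star\|$ and moments of $Z$. The membership $s \in \mathcal{H}_V^0$ gives $\|s\|_\infty \leq V$; Property~\eqref{eq:estimatenormscore} supplies $\|s^\star(t,y)\| \leq C(1+t^{-1})(1+\|y\|)$; and Gaussian moments take care of the cross term involving $Z/\sigma_t$. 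The $\log n$ factor enters through $\|x\|^2 \leq 4 K \log n$ on $A_n$ (and through the subGaussian tail of $\|X_t\| \leq \|x\| + \sigma_t \|Z\|$), which, combined with a factor $(\overline{\tau}-\underline{\tau})$ from integration in $t$, produces the claimed form of $c_n$.

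For the global $L^2$ bound $\E \nu^2 \leq C(\overline{\tau}-\underline{\tau})^2 g^2$, I would first apply Cauchy--Schwarz in $t$ to obtain $\nu(s,X)^2 \leq (\overline{\tau}-\underline{\tau}) \int_{\underline{\tau}}^{\overline{\tau}} (\E_Z[\,\cdot\,])^2 \, \mathrm{d} t$, then Jensen's inequality to exchange the square with $\E_Z$, and finally take $\E_{X \sim p^\star}$. Since one no longer restricts to $A_n$, no $\log n$ factor appears; instead the needed control on $\E_X \|X\|^{2k}$ follows from the $K$-subGaussianity of $p^\star$, and standard Gaussian moments handle $Z$, together with the same pointwise control on $s$ and $s^\star$ as before.

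The main obstacle is the third claim, the Bernstein-type inequality $\E \nu_c^2 \leq c_n \E \nu + c_n^2 n^{-4}$, which is delicate because $\nu(s,\cdot)$ is a \emph{signed} function: it can be negative pointwise even though its expectation is non-negative. The key is to apply Cauchy--Schwarz inside the $Z$-expectation on the factorization above, yielding
\begin{align*}
    \nu(s,x)^2 \leq (\overline{\tau} - \underline{\tau}) \int_{\underline{\tau}}^{\overline{\tau}} \E_Z\bigl[\|s(t,X_t) - s^\star(t,X_t)\|^2 \cdot \|s(t,X_t) + s^\star(t,X_t) + 2Z/\sigma_t\|^2\bigr] \, \mathrm{d} t,
\end{align*}
and then bound the sum factor by a quantity of order $c_n$ on a high-probability event for $Z$ (typical regime $\|Z\| \lesssim \sqrt{\log n}$; atypical regime absorbed into the $c_n^2 n^{-4}$ slack). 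This reduces the estimate on $A_n$ to $\E[\nu^2 \1_{A_n}] \leq c_n \cdot \E_{X \sim p^\star}\int_{\underline{\tau}}^{\overline{\tau}} \E_Z \|s - s^\star\|^2 \, \mathrm{d} t = c_n \E \nu$ via Vincent's identity, while the complement $A_n^c$ contributes the additive $c_n^2 n^{-4}$ term thanks to the uniform clipping $|\nu_c| \leq c_n$. The subtlety is precisely in decoupling $Z$ from $X_t$ in the sum factor without losing the factorization, which is what the splitting of $Z$ into typical/atypical regimes achieves.
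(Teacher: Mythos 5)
Your structural plan --- the factorization $\nu(s,x)=\int \E_Z\langle s-s^\star,\,s+s^\star+2Z/\sigma_t\rangle\,\mathrm{d}t$, a Cauchy--Schwarz step, a typical/atypical split in $Z$, and clipping on the bad set --- is indeed the same skeleton the paper uses, and your treatment of the third (Bernstein-type) inequality is conceptually on track. However, there is a concrete quantitative gap in the first two bounds, and it stems from a choice you made explicitly: you take $A_n:=\B(0,\sqrt{4K\log n})$ and control the score only via Property~\eqref{eq:estimatenormscore}, which gives $\|s^\star(t,y)\|\leq C(1+t^{-1})(1+\|y\|)$. Squaring and integrating in $t$ this yields, at best, $\int_{\underline{\tau}}^{\overline{\tau}}t^{-2}\,\mathrm{d}t\lesssim(\overline{\tau}-\underline{\tau})\underline{\tau}^{-2}$, i.e. a $\underline{\tau}^{-2}$ dependence, whereas the claimed $g(\underline{\tau},\sigma,V)$ contains only $(1+\underline{\tau}^{-1})+\sigma_{\underline{\tau}}^{-2}\simeq\underline{\tau}^{-1}$. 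For the $L^2$ bound the discrepancy is worse ($\underline{\tau}^{-4}$ vs.\ the required $\underline{\tau}^{-2}$). So the bound you derive has the wrong order in $\underline{\tau}$.

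The paper fixes this by not taking $A_n$ to be a ball but the regularity set $A_n:=A_0^{\varepsilon_n}\cap A_\infty^{\varepsilon_n}$ coming from Properties~\hyperref[{assump:scoreregu}]{$(\mathcal{P}_\beta)$}, where Theorem~\ref{coro:regularityp} gives the sharper estimate $\|s^\star(t,x)\|\leq C\log(\varepsilon^{-1})^C(1+t^{-1/2}+\sigma^{-2})$. The exponent drop from $t^{-1}$ to $t^{-1/2}$ is exactly what produces the $(1+\underline{\tau}^{-1})$ factor in $g$ after squaring and integrating. Making this work additionally requires: (i) Property~$(\mathcal{P}_\beta)$.2 to verify that the noised point $e^{-t}x+\sigma_t z$ remains in the slightly larger set $A_0^{\varepsilon_n^2}\cap A_\infty^{\varepsilon_n^2}$ for $x\in A_n$, typical $\|z\|\lesssim\log n$, and small $t\leq t_n\sim\log(n)^{-C}$, and (ii) using $\underline{\tau}\geq n^{-1}$ to absorb the $t^{-1}$ (unstructured) bound from~\eqref{eq:estimatenormscore} on the complement against the $n^{-4}$ probability mass. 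Your proposal never invokes~\eqref{eq:reguscoretime}/\eqref{eq:reguscorespace} nor Property~$(\mathcal{P}_\beta)$.2, and with your Euclidean-ball choice of $A_n$ there is no natural way to import them, since a ball does not track where the score is regular. Once that ingredient is added the third inequality follows as you outlined, because $\gamma(s,x)+\gamma(s^\star,x)\leq c_n$ on $A_n$ is exactly what the corrected first bound gives.
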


The proof of Lemma~\ref{lem:supnorm_var_contrast} is postponed to Section~\ref{sec:proof_lem_supnor_var_contrast}. Equipped with Proposition~\ref{prop:Bernstein_union} and Lemma~\ref{lem:supnorm_var_contrast}, we are in position to prove Proposition~\ref{prop:tradeoffbv}. 

\begin{proof}[Proof of Proposition~\ref{prop:tradeoffbv}]

Recall that the centered contrast function $\nu$ is defined by~\eqref{eq:excess-contrast}. Let us denote by $c_n :=C(\overline{\tau}-\underline{\tau})\log(n)^C g(\underline{\tau},\sigma,V)$, for $C$ large enough depending on $K$. Applying Proposition~\ref{prop:Bernstein_union} to $\mathcal{F}_{\mathcal{S}} := \{\nu(s,.)\}_{s \in \mathcal{S}}$ with $\delta = 1/2$ leads to
\begin{multline*}
     \E_{\mathbb{X}^{(n)}}  \left [  \E_{X \sim p^\star} (\nu(\widehat{s},X)) - \nu(\bar{s},X)) - (1/2)\mathrm{Var}_{X \sim p^\star} (\nu_c(\widehat{s},X)) \right ] \\
     \leq (1/2)\mathrm{Var}_{X \sim p^\star}(\nu_c(\bar{s},X)) + c_n \frac{\log\left ( \mathcal{N}(\mathcal{F}_{\mathcal{S}}, \|\cdot\|_\infty,\varepsilon) \vee n \right )}{n}  
     + 4 \varepsilon + 16 \varepsilon^2.
\end{multline*}

Using the connection between $\mathrm{Var}_{X \sim p^\star} \nu(s,X)$ and $\E_{X \sim p^\star} \nu(s,X)$ provided by Lemma~\ref{lem:supnorm_var_contrast}, it follows that
\begin{align*}
    \E_{\mathbb{X}^{(n)}, X \sim p^\star} \nu(\widehat{s},X) & \leq (1/2) \E_{\mathbb{X}^{(n)}, X \sim p^\star} \nu(\widehat{s},X) + (3/2) \E_{X \sim p^\star} \nu(\bar{s},X) + c_0c_n \frac{\log\left ( \mathcal{N}(\mathcal{F}_{\mathcal{S}}, \|\cdot\|_\infty,\varepsilon) \vee n \right )}{n}  \\
    & \quad + 4 \varepsilon + 16 \varepsilon^2, 
\end{align*}
for some constant $c_0$. It follows that
\begin{align}\label{eq:oracle_1}
    \E_{\mathbb{X}^{(n)}, X \sim p^\star} \nu(\widehat{s},X) \leq 3 \E_{X \sim p^\star} \nu(\bar{s},X) + c_0c_n \frac{\log\left ( \mathcal{N}(\mathcal{F}_{\mathcal{S}}, \|\cdot\|_\infty,\varepsilon) \vee n \right )}{n} + 8 \varepsilon + 32 \varepsilon^2. 
\end{align}
It remains to connect the covering numbers of $\mathcal{F}_{\mathcal{S}}$ with that of $\mathcal{S}$.
To do so, let $Z \sim \mathcal{N}(0,I_{d \times d})$. We note that for all $s,s' \in \mathcal{S}$ and $x \in \R^d$, definition~\eqref{eq:excess-contrast} and a direct development of~\eqref{eq:contrast} yield
\begin{align*}
    | \nu(s,x) - \nu(s',x) |
    &= | \gamma(s,x) - \gamma(s',x) |
    \\
    & = 
    \left|
    \int_{\underline{\tau}}^{\overline{\tau}} (\E_Z \| s(t,e^{-\lambda t}x + \sigma_t Z) - Z/\sigma_t \|^2 - \E_Z \| s'(t,e^{-\lambda t}x + \sigma_t Z) - Z/\sigma_t \|^2)\mathrm{d}t
    \right|
    \\
    & = 
    \left|
    \int_{\underline{\tau}}^{\overline{\tau}} \E_Z\left \langle (s-s')(t,e^{-\lambda t}x + \sigma_t Z), (s+s')(t,e^{-\lambda t}x + \sigma_t Z) + (2Z/\sigma_t) \right\rangle \mathrm{d}t 
    \right|
    \\
    & \leq \int_{\underline{\tau}}^{\overline{\tau}} \|s-s'\|_\infty (2 V + 2 \sqrt{d}/\sigma_t) \mathrm{d}t \\
    & \leq C_d (V+\sigma_{\underline{\tau}}^{-1}) (\overline{\tau}-\underline{\tau}) \|s-s'\|_\infty,
\end{align*}
where we used that $(\|s\|_{\infty} \vee \|s'\|_\infty) \leq V$ and $t\mapsto \sigma_t$ is non-increasing. Therefore, 
\begin{align*}
    \mathcal{N}(\mathcal{F}_\mathcal{S},\|\cdot\|_\infty,g(\underline{\tau}, \sigma,V)(\overline{\tau}-\underline{\tau})\varepsilon) \leq \mathcal{N}(\mathcal{S},\|\cdot\|_\infty,\varepsilon).
\end{align*}
Choosing $\varepsilon := g(\underline{\tau}, \sigma,V)(\overline{\tau}-\underline{\tau})/n$ in~\eqref{eq:oracle_1} yields, for $n$ large enough,
\begin{align*}
 \E_{\mathbb{X}^{(n)}, X \sim p^\star} \nu(\widehat{s},X) \leq 3 \E_{X \sim p^\star} \nu(\bar{s},X) + C_K(\overline{\tau}-\underline{\tau})\log(n)^{C_K} g(\underline{\tau}, \sigma,V) \frac{\log\left ( \mathcal{N}(\mathcal{S},\|\cdot\|_\infty,n^{-1}) \vee n \right )}{n}, 
\end{align*}
and hence the result.
\end{proof}

\subsubsection{Proof of Proposition~\ref{prop:Bernstein_union}}\label{sec:proof_prop_Bernstein_union}
To begin with the proof of Proposition~\ref{prop:Bernstein_union}, a lemma connecting clipped and non-clipped expectation is needed.
\begin{lemma}\label{lem:connection_clip_unclip}
Let $p^\star$ be a distribution on $\R^d$, and $A_n \subset \R^d$ be such that $\P_{p^\star}(A_n^c) \leq n^{-4}$, and 
\begin{center}
$\sup_{x \in A_n,f \in \mathcal{F}} |f(x)| \leq c_n$, $\sup_{f \in \mathcal{F}} \E_{X \sim p^\star} f^2(X) \leq v$. 
\end{center}
Then, for any $f \in \mathcal{F}$,
\begin{align*}
    \left | \E_{X \sim p^\star} f(X) - \E_{X \sim p^\star} f_c(X) \right | \leq \sqrt{v}n^{-2}.
\end{align*}
\end{lemma}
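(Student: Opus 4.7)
The plan is to decompose $f - f_c$ by exploiting the fact that clipping is the identity map whenever its argument already lies below the threshold $c_n$. The hypothesis $\sup_{x \in A_n, f \in \mathcal{F}} |f(x)| \leq c_n$ guarantees that $f_c(x) = f(x)$ for every $x \in A_n$ and every $f \in \mathcal{F}$. Consequently $(f - f_c)\mathds{1}_{A_n} \equiv 0$, and the entire discrepancy between $\E f$ and $\E f_c$ must come from the low-probability event $A_n^c$.

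Writing $\E[f - f_c] = \E[(f - f_c)\mathds{1}_{A_n^c}]$, I would then bound the integrand pointwise. From the very definition of the clipping operation, $f - f_c$ vanishes whenever $|f| \leq c_n$, and equals $\mathrm{sg}(f)(|f| - c_n)$ otherwise; in particular, $|f - f_c| \leq |f|$ everywhere. Plugging this in gives
\begin{equation*}
    \bigl|\E[f - f_c]\bigr| \leq \E\bigl[|f|\mathds{1}_{A_n^c}\bigr].
\end{equation*}

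To conclude, I would apply the Cauchy--Schwarz inequality to the product $|f| \cdot \mathds{1}_{A_n^c}$, which consumes both hypotheses at once:
\begin{equation*}
    \E\bigl[|f|\mathds{1}_{A_n^c}\bigr] \leq \sqrt{\E f^2(X)}\,\sqrt{\P_{p^\star}(A_n^c)} \leq \sqrt{v}\cdot n^{-2}.
\end{equation*}
There is essentially no obstacle here; the argument is a textbook truncation estimate, and the exponent $n^{-2}$ arises simply as $\sqrt{n^{-4}}$ from coupling the tail probability with the uniform $L^2$ bound. The only minor point to watch is that $|f - f_c| \leq |f|$ (rather than the cruder $|f - f_c| \leq |f| + |f_c| \leq 2|f|$) is needed to achieve the stated constant of $1$ in front of $\sqrt{v} n^{-2}$.
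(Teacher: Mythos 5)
Your proposal is correct and follows exactly the same route as the paper's proof: show $(f-f_c)\mathds{1}_{A_n}\equiv 0$ via the sup-bound hypothesis, bound the remaining term by $\E[|f|\mathds{1}_{A_n^c}]$ using $|f-f_c|\leq|f|$, and finish with Cauchy--Schwarz together with $\P(A_n^c)\leq n^{-4}$. Nothing to add.
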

\begin{proof}[Proof of Lemma~\ref{lem:connection_clip_unclip}]
Let $f \in \mathcal{F}$. We decompose $\left | \E_{X \sim p^\star} (f(X) -  f_c(X)) \right |$ as follows.
\begin{align*}
    \left | \E_{X \sim p^\star} (f(X) -  f_c(X)) \right | & = \left | \E_{X \sim p^\star} (f -  f_c)\1_{A_n}(X) \right | + \left | \E_{X \sim p^\star} (f -  f_c)\1_{A_n^c}(X) \right |.
\end{align*}
According to the definition of $c_n$, $f(X)\1_{X \in A_n} = f_c(X)\1_{X \in A_n}$, so that 
\begin{align*}
\left | \E_{X \sim p^\star} (f(X) -  f_c(X)) \right | & = \left | \E_{X \sim p^\star} (f -  f_c)\1_{A_n^c}(X) \right | \\
& \leq \E_{X \sim p^\star} ( \left | f \1_{A_n^c}(X) \right | ) \\
& \leq \sqrt{\E_{X \sim p^\star} f^2(X)} \sqrt{\P_{p^\star} (A_n^c)} \\
& \leq \sqrt{\E_{X \sim p^\star} f^2(X)} n^{-2},
\end{align*}
where we used $|f(x) - f_c(x)| \leq |f(x)|$ and Cauchy-Schwarz inequality.
\end{proof}

We are now in position to prove Proposition~\ref{prop:Bernstein_union}.
\begin{proof}[Proof of Proposition~\ref{prop:Bernstein_union}]
Let $\widehat{f} \in \arg\min_{f \in \mathcal{F}} \E_n(f)$ and $\bar{f} \in \arg\min_{f \in \mathcal{F}} \E_{X \sim p^\star} f(X)$. Then it holds
\begin{align}\label{eq:oracle_0}
        \E_{X \sim p^\star} \widehat{f}(X) & = (\E_{X \sim p^\star} - \E_n) \widehat{f}(X) + \E_n \widehat{f}(X) \notag \\
        & \leq (\E_{X \sim p^\star} - \E_n) \widehat{f}(X) + \E_n \bar{f}(X) \notag \\
        & = \E_{X \sim p^\star} \bar{f}(X) + (\E_{X \sim p^\star} - \E_n) \widehat{f}(X) + (\E_n-\E_{X \sim p^\star}) \bar{f}(X).
\end{align}
We let $\mathcal{C}_{\infty}(\mathcal{F},\varepsilon) \subset \mathcal{F}$ be a minimal $\varepsilon$-covering of $\mathcal{F}$ in sup norm, with cardinal $|\mathcal{C}_{\infty}(\mathcal{F},\varepsilon)| = \mathcal{N}(\mathcal{F},\|\cdot\|_\infty,\varepsilon)$. 
For $f \in \mathcal{F}$, let $f_\varepsilon(f) \in \mathcal{C}_{\infty}(\mathcal{F},\varepsilon)$ denote any function such that $\|f - f_\varepsilon(f)\|_\infty \leq \varepsilon$.

Let us bound the first deviation term of~\eqref{eq:oracle_0}. Straightforward computation entails that
\begin{align}\label{eq:oracle_2}
    (\E_{X \sim p^\star} - \E_n) \widehat{f} & \leq  (\E_{X \sim p^\star} - \E_n) f_\varepsilon(\widehat{f}) + 2 \varepsilon. 
\end{align}

Next, denote by $A$ the event $\left \{ \forall i,\; X^{(i)} \in A_n \right \}$, that has probability larger than $1-n^{-3}$. For a fixed $f_j \in \mathcal{C}_{\infty}(\mathcal{F},\varepsilon)$, we let $f_{j,c}$ denote its clipped version.
On $A$ it holds $\E_n(f_j) = \E_n(f_{j,c})$. Since Lemma~\ref{lem:connection_clip_unclip} entails that 
\begin{align*}
    \E_{X \sim p^\star}f_{j}(X) 
    & \leq \E_{X\sim p^\star} f_{j,c}(X) + \sqrt{v}n^{-2},
\end{align*}
on $A$ it holds
\begin{align}\label{eq:oracle_3}
    (\E_{X \sim p^\star} - \E_n) f_j(X) \leq (\E_{X \sim p^\star} - \E_n) f_{j,c}(X) + \sqrt{v}n^{-2}. 
\end{align}
Applying a standard Bernstein inequality (see, e.g,~\cite[Corollary~2.11]{Boucheron13}) $f_{j,c}$ leads to
\begin{align*}
    |(\E_{X \sim p^\star} - \E_n) f_{j,c}| & \leq \sqrt{2 \mathrm{Var}_{X \sim p^\star}(f_{j,c}) t/n} + c_nt/n \\
    & \leq \delta \mathrm{Var}_{X \sim p^\star} (f_{j,c}) + \left (c_n + \frac{1}{2\delta} \right ) \frac{t}{n},
\end{align*}
 with probability larger than $1-2e^{-t}$.

Combining the above calculation with~\eqref{eq:oracle_3} leads to
\begin{align*}
    \P \left [A \cap \left \{ |(\E_{X \sim p^\star} - \E_n) f_j(X)| \geq \delta \mathrm{Var}_{X \sim p^\star} (f_{j,c}) + \sqrt{v}n^{-2} + \left (c_n + \frac{1}{2\delta} \right ) \frac{t}{n}  \right \} \right ] \leq 2 e^{-t}.
\end{align*}
Recalling that $|\mathcal{C}_{\infty}(\mathcal{F},\varepsilon)| = \mathcal{N}(\mathcal{F},\|\cdot\|_\infty,\varepsilon)$, a standard union bound yields that
\begin{align}\label{eq:oracle_4}
    \P \left [A \cap \left \{ \exists j \mid |(\E_{X \sim p^\star} - \E_n) f_j| \geq \delta \mathrm{Var}_{X \sim p^\star} (f_{j,c}) + \sqrt{v}n^{-2} + \left (c_n + \frac{1}{2 \delta} \right ) \frac{t+\log\left ( \mathcal{N}(\mathcal{F},\|\cdot\|_\infty,\varepsilon) \right )}{n} \right \} \right ] \leq 2 e^{-t}.
\end{align}
Denoting by $B_{t,n}$ the event 
\begin{align*}
    B_{t,n} = \left \{ \forall j \quad |(\E_{X \sim p^\star} - \E_n) f_j| \leq \delta \mathrm{Var}_{X \sim p^\star} (f_{j,c}) + \sqrt{v}n^{-2} + \left (c_n + \frac{1}{2 \delta} \right ) \frac{t+\log\left ( \mathcal{N}(\mathcal{F},\|\cdot\|_\infty,\varepsilon) \right )}{n} \right \}, 
\end{align*}
it comes $\P(B_{t,n}^c) \leq \P(B_{t,n}^c \cap A) + \P(A^c) \leq n^{-3} + 2 e^{-t}$. Furthermore, on $B_{t,n}$,~\eqref{eq:oracle_2} entails
\begin{align}\label{eq:oracle_5}
      & (\E_{X \sim p^\star} - \E_n) \widehat{f}  \notag \\
     &  \quad  \leq  (\E_{X \sim p^\star} - \E_n) f_\varepsilon(\widehat{f})  + 2 \varepsilon \notag \\
    &  \quad  \leq \delta \mathrm{Var}_{X \sim p^\star} (f_{\varepsilon}(\widehat{f})_c) + \sqrt{v}n^{-2} + \left (c_n + \frac{1}{2 \delta} \right ) \frac{t+\log\left ( \mathcal{N}(\mathcal{F},\|\cdot\|_\infty,\varepsilon) \right )}{n} +2 \varepsilon \notag  \\
    & \quad \leq   2 \delta \mathrm{Var}_{X \sim p^\star} (\widehat{f}_c) + \sqrt{v}n^{-2} + \left (c_n + \frac{1}{2 \delta} \right ) \frac{t+\log\left ( \mathcal{N}(\mathcal{F},\|\cdot\|_\infty,\varepsilon) \right )}{n} + 2 \varepsilon + 8 \varepsilon^2,
\end{align}
where we used that, for any $f \in \mathcal{F}$,
\begin{align*}
    \mathrm{Var}_{X \sim p^\star} (f_c) & \leq 2 \mathrm{Var}_{X \sim p^\star} ((f_\varepsilon(f))_c) + 8 \| f_c - (f_\varepsilon(f))_c\|_\infty^2 \\
    & \leq 2 \mathrm{Var}_{X \sim p^\star} ((f_\varepsilon(f))_c) + 8 \varepsilon^2.
\end{align*}
Using the same computation as above for $(\E_n-\E_{X \sim p^\star}) \bar{f}$ leads to
\begin{align}\label{eq:oracle_6}
    (\E_n-\E_{X \sim p^\star}) \bar{f} \leq 2 \delta \mathrm{Var}_{X \sim p^\star} (\bar{f}_c) + \sqrt{v}n^{-2} + \left (c_n + \frac{1}{2\delta} \right ) \frac{t+\log\left ( \mathcal{N}(\mathcal{F},\|\cdot\|_\infty,\varepsilon) \right )}{n} + 2 \varepsilon + 8 \varepsilon^2,
\end{align}
on $B_{t,n}$. Plugging~\eqref{eq:oracle_5} and~\eqref{eq:oracle_6} into~\eqref{eq:oracle_0} yields that, still on $B_{t,n}$, 
\begin{align*}
     \E_{X\sim p^\star} \widehat{f}(X) 
    & \leq \E_{X\sim p^\star} \bar{f}(X) + 2 \delta(\mathrm{Var}_{X \sim p^\star} (\bar{f}_c) + \mathrm{Var}_{X \sim p^\star}) (\widehat{f}_c) + 2\sqrt{v}n^{-2} \\
    & \quad +  2 \left (c_n + \frac{1}{2\delta} \right ) \frac{t+\log\left ( \mathcal{N}(\mathcal{F},\|\cdot\|_\infty,\varepsilon) \right )}{n} + 4 \varepsilon + 16 \varepsilon^2,  
\end{align*}
that gives the deviation bound of Proposition~\ref{prop:Bernstein_union}, replacing $2\delta$ by $\delta$.

For the expectation bound, Jensen inequality entails $\E_{X \sim p^\star} f(X) \leq \sqrt{v}$, for all $f \in \mathcal{F}$. Using the deviation bound from above, we may write
\begin{align*}
    & \E_{\mathbb{X}^{(n)}} \left [ \E_{X\sim p^\star} \widehat{f}(X) - \E_{X\sim p^\star} \bar{f}(X) - \delta \mathrm{Var}_{X \sim p^\star}(\widehat{f})) \right ] \\
    & \quad \leq  \E_{\mathbb{X}^{(n)}} \left ( \left [\E_{X\sim p^\star} ( \widehat{f}(X))  - \E_{X\sim p^\star} \bar{f}(X)- \delta \mathrm{Var}_{X \sim p^\star}(\widehat{f}) \right ]\1_{B_{t,n}} \right ) + \sqrt{v}(n^{-3} + 2e^{-t}) \\
    & \quad \leq  \delta \mathrm{Var}_{X \sim p^\star} (\bar{f}_c) + 2\sqrt{v}n^{-2} + 2 \left (c_n + \frac{1}{\delta} \right ) \frac{t+\log\left ( \mathcal{N}(\mathcal{F},\|\cdot\|_\infty,\varepsilon) \right )}{n} + 4 \varepsilon + 16 \varepsilon^2  \\
    & \qquad + \sqrt{v}(n^{-3} + 2e^{-t}). 
\end{align*}
Choosing $t = 2 \log(n)$ yields
\begin{align*}
     \E_{\mathbb{X}^{(n)}} \left [ \E_{X\sim p^\star} \widehat{f}(X) - \E_{X\sim p^\star} \bar{f}(X) - \delta \mathrm{Var}_{X \sim p^\star}(\widehat{f})) \right ] &  \leq \delta \mathrm{Var}_{X \sim p^\star} (\bar{f}_c) + 6 \left (c_n + \frac{1}{\delta} \right ) \frac{\log\left ( \mathcal{N}(\mathcal{F},\|\cdot\|_\infty,\varepsilon) \vee n \right )}{n} \\
    & \quad + 5 \sqrt{v}n^{-2}  + 4 \varepsilon + 16 \varepsilon^2,
\end{align*}
that is the expectation bound of Proposition~\ref{prop:Bernstein_union}.
\end{proof}

\subsubsection{Proof of Lemma~\ref{lem:supnorm_var_contrast}}
\label{sec:proof_lem_supnor_var_contrast}

\begin{proof}[Proof of Lemma~\ref{lem:supnorm_var_contrast}]
Let $x \in \R^d$ and $\|s\|_\infty \leq V$. Then
\begin{align*}
    \gamma_{\underline{\tau},\overline{\tau}}(s,x) & \leq 2 \int_{\underline{\tau}}^{\overline{\tau}} \E_Z (\|s(t,e^{-\lambda t}x + \sigma_t Z)\|^2 + \sigma_t^{-2}\E_Z \|Z\|^2 \mathrm{d}t \\
    & \leq 2 V^2 (\overline{\tau}-\underline{\tau}) + 2d \int_{\underline{\tau}}^{\overline{\tau}} \sigma_{t}^{-2} \mathrm{d}t
    \\
    & \leq C(\overline{\tau}-\underline{\tau})( V^2+\sigma_{\underline{\tau}}^{-2}) ),
\end{align*}
since $t \mapsto \sigma_t^{-2}$ is non-increasing.

Since $s^\star$ satisfies Properties~\hyperref[{assump:scoreregu}]{$(\mathcal{P}_\beta)$}, it holds that  
\begin{align*}
    \P_{X \sim p^\star} \left ( A_0^{\varepsilon_n} \cap A_{\infty}^{\varepsilon_n} \right ) \geq 1 - n^{-4},
\end{align*}
for $\varepsilon_n=n^{-C}$ and $A_0^{\varepsilon_n}$, $A_\infty^{\varepsilon_n}$ onto which Theorem~\ref{coro:regularityp} is valid.
For short, write
$$
t_n := C(1+\sigma)^{-2} \log(n)^{-C_+},
$$
for a constant $C_+$ that will be chosen large enough.
We will show that, for any $0<t \leq t_n$, $x \in A_0^{\varepsilon_n} \cap A_{\infty}^{\varepsilon_n}$ and $\|z\| \leq 4 \log(n)$, $xe^{-t} + \sigma_t z \in A_0^{\varepsilon^2_n} \cap A_\infty^{\varepsilon^2_n} $, for $n$ large enough.

A standard triangle inequality yields that
\begin{align*}
    \|x - e^{-t}x - \sigma_t z\| & \leq (1-e^{-t}) \|x\| + \sigma_t \|z\| \\
    & \leq C \log(n) t + C \sigma \sqrt{t} \log n \\
    & \leq C(1+\sigma) \sqrt{t} \log n ,
\end{align*}
for $t \leq 1$ and $n$ large enough. According to Properties~\hyperref[{assump:scoreregu}]{$(\mathcal{P}_\beta)$}, 
\begin{align*}
    C(1+\sigma) \sqrt{t} \log n \leq C^{-1} \log(\varepsilon^{-1}_n)^{-C} = C \log(n)^{-C}
\end{align*}
is thus enough to guarantee that $x - e^{-t}x - \sigma_t z \in A_0^{\varepsilon^2_n} \cap A_\infty^{\varepsilon^2_n}$. Choosing $C_+$ large enough in $t_n = C(1+\sigma)^{-2} \log(n)^{-C_+}$ ensures that the above inequality is satisfied, for any $t \leq t_n$, and $n$ large enough.

We are now in position to control $\gamma(s^\star,x)$, for $x \in A_0^{\varepsilon_n} \cap A_\infty^{\varepsilon_n}$. 
According to Proposition~\ref{prop:estimatenormscore} and Theorem~\ref{coro:regularityp}, it holds
\begin{itemize}
    \item if $x \in A_0^\varepsilon \cap A_\infty^\varepsilon$, $\|s^\star(t,x)\| \leq C \log(\varepsilon^{-1})^C (1+ t^{-1/2} + {\sigma^{-2}}) $;
    \item if $x \notin A_0^\varepsilon \cap A_\infty^\varepsilon$, $\|s^\star(t,x)\| \leq C \|x\|(1+t^{-1})$. 
\end{itemize}
For short we denote by $A_n = A_0^{\varepsilon_n} \cap A_\infty^{\varepsilon_n}$, and $A'_n=A_0^{\varepsilon^2_n} \cap A_\infty^{\varepsilon^2_n}$, for $\varepsilon_n$ defined above. Note that, according to Theorem~\ref{coro:regularityp}, $A_n \subset \B(0,C \log(n)^C)$.

If $n^{-1} \leq t\leq t_n$,  then
\begin{align*}
     & \E_Z (\|s^\star(t,e^{- t}x + \sigma_t Z\|^2) \\
     & \quad  \leq C \log(n)^C (t^{-1}+{ \sigma^{-4}}) \E_Z (  \1_{e^{- t}x + \sigma_t Z \in A'_n}) + Ct^{-2} \E_Z(\|e^{-t} x + \sigma_t Z\|^2 \1_{e^{- t}x + \sigma_t Z \notin A'_n}) \\
    & \quad \leq C \log(n)^C (t^{-1}+{ \sigma^{-4}}) + C t^{-2} \log(n)^C \P(\|Z\| > 4 \log(n)) + Ct^{-2} \sigma^2  \E_{Z}(\|Z\|^2 \1_{\|Z\| \geq 4 \log(n)}) \\
    & \quad \leq C (1 + \sigma^2+{ \sigma^{-4}})(\log(n)^C t^{-1} + n^{-2} t^{-2}) \\
    &  \quad \leq C (1 + \sigma^2+{ \sigma^{-4}}) \log(n)^C t^{-1},
\end{align*}
using Cauchy-Schwarz inequality and $tn \geq 1$ for the last lines.

If $t \geq t_n = c \log(n)^{-c}(1+\sigma)^{-2}$, then
\begin{align*}
    \E_Z (\|s^\star(t,e^{- t}x + \sigma_t Z\|^2) & \leq C(1+t^{-2}) \E_Z (\|e^{-t} x + \sigma_t Z\|^2) \\
    & \leq C(1+\sigma)^{2} \log(n)^C (1+t^{-2}) \\
    & \leq C(1+\sigma)^{4} \log(n)^C (1+t^{-1}).
\end{align*}

Thus, 
\begin{align*}
    \gamma(s^\star,x) & \leq 2 \int_{\underline{\tau}}^{\overline{\tau}} \E_Z (\|s^\star(t,e^{- t}x + \sigma_t Z\|^2) \mathrm{d}t + 2d \int_{\underline{\tau}}^{\overline{\tau}} \sigma_t^{-2} \mathrm{d}t \\
    & \leq C(1+\sigma+{ \sigma^{-1}})^4 \log(n)^C (\overline{\tau} - \underline{\tau})(1+ \underline{\tau}^{-1}) + 2d (\overline{\tau} - \underline{\tau}) \sigma_{\underline{\tau}}^{-2}. 
\end{align*}
It immediatly follows that
\begin{align*}
    |\nu(s,x)| \leq C(\overline{\tau}-\underline{\tau})\log(n)^C g(\underline{\tau},\sigma,V),
\end{align*}
with $g(\underline{\tau},\sigma,V) = 1 + V^2 + (1+\sigma+{ \sigma^{-1}})^4 (1+\underline{\tau}^{-1}) + \sigma_{\underline{\tau}}^{-2}$.

Now we turn to the bound on $\E_{X \sim p^\star} \nu^2(s,X)$. For all $x \in \R^d$ and $z \in \R^d$, the above calculation yields that
\begin{align*}
    \|s^\star(t,e^{-t}x + \sigma_t z)\|^4 \leq C\left ( \log(n)^C(1+t^{-2} + { \sigma^{-8}}) + (\|x\|^4 + \sigma^4\|z\|^4)(1+t^{-4}) \right ).
\end{align*}
Thus, for $t \geq n^{-1}$,
\begin{align*}
  & \E_{X \sim p^\star} \E_{Z} \|s^\star(t,e^{-t}X + \sigma_t Z)\|^4 \\  
  &   = \E_{X \sim p^\star} \E_{Z} \left [ \|s^\star(t,e^{-t}X + \sigma_t Z)\|^4 \1_{A_n}(X) \right ] + \E_{X \sim p^\star} \E_{Z} \left [ \|s^\star(t,e^{-t}X + \sigma_t Z)\|^4 \1_{A_n^c}(X) \right ] \\  
    & \leq C(1+\sigma+{ \sigma^{-1}})^8 \log(n)^C (1+t^{-2}) \\
    & \quad + C \log(n)^C(1+t^{-2}+{ \sigma^{-8}})n^{-4} + C (1+t^{-4}) \E_{X \sim p^\star} (\|X\|^4 \1_{A_n^c}(X)) + C \sigma^4 (1+t^{-4}) n^{-4} \\
    & \leq C(1+\sigma+\sigma^{-1})^8\log(n)^C(1+t^{-2}),
\end{align*}
using that $p^\star$ is $K$-subGaussian, Cauchy-Schwarz inequality and $tn \geq 1$.

We deduce that
\begin{align*}
    \E_{X \sim p^\star} \gamma^2(s^\star,X) & = \E_{X \sim p^\star} \left ( \int_{\underline{\tau}}^{\overline{\tau}} \E_Z \|s^\star(t,e^{-t} X + \sigma_t Z) - Z/\sigma_t \|^2 \mathrm{d}t \right )^2 \\
    & \leq (\overline{\tau}-\underline{\tau}) \int_{\underline{\tau}}^{\overline{\tau}} \E_{X\sim p^\star,Z} \|s^\star(t,e^{-t} X + \sigma_t Z) - Z/\sigma_t \|^4 \mathrm{d}t \\
    & \leq C(\overline{\tau}-\underline{\tau})^2 \sigma_{\underline{\tau}}^{-4} + C(\overline{\tau}-\underline{\tau})^2 C(1+\sigma+{ \sigma^{-1}})^8\log(n)^C(1+\underline{\tau}^{-2}).
\end{align*}
At last, 
\begin{align*}
    \E_{X \sim p^\star} \nu^2(s,X) & \leq 2 (\E_{X\sim p^\star} \gamma^2(s,X) + \E_{X \sim p^\star}\gamma^2(s^\star,X)) \\
    & \leq C(\overline{\tau}-\underline{\tau})^2 \log(n)^C g^2(\underline{\tau},\sigma,V).
\end{align*}

Let us turn to the last inequality of Lemma~\ref{lem:supnorm_var_contrast}.
For all $x,t,z$, it holds 
\begin{align*}
   &  \left | \|s(t,e^{ t} x + \sigma_t z) - z/\sigma_t \|^2 - \|s^\star(t,e^{- t} x + \sigma_t z) - z/\sigma_t \|^2\right | \\
   & \quad \leq \|(s-s^\star)((t,e^{- t} x + \sigma_t z)\|\left ( \|s(t,e^{- t} x + \sigma_t z) - z/\sigma_t \| + \|s^\star(t,e^{- t} x + \sigma_t z) - z/\sigma_t \| \right ). \\
\end{align*}
Using Cauchy-Schwaz inequality entails that 
\begin{align*}
    | \nu(s,x) | & = \left | \E_Z \int_{\underline{\tau}}^{\overline{\tau}} \| s(t,e^{- t} x + \sigma_t Z) - Z/\sigma_t \|^2 \mathrm{d}t - \E_Z \int_{\underline{\tau}}^{\overline{\tau}} \| s^\star(t,e^{- t} x + \sigma_t Z) - Z/\sigma_t \|^2 \mathrm{d}t \right | \\
    & \leq \E_Z \int_{\underline{\tau}}^{\overline{\tau}} \|(s-s^\star)((t,e^{- t} x + \sigma_t Z)\|\left ( \|s(t,e^{- t} x + \sigma_t Z) - Z/\sigma_t \| + \|s^\star(t,e^{- t} x + \sigma_t Z) - Z/\sigma_t \|  \right ) \mathrm{d}t \\
    & \leq \sqrt{\E_Z \int_{\underline{\tau}}^{\overline{\tau}} \|(s-s^\star)((t,e^{- t} x + \sigma_t Z)\|^2 \mathrm{d}t} \\
    & \quad \times \sqrt{2 \E_Z \int_{\underline{\tau}}^{\overline{\tau}}  (\|s(t,e^{- t} x + \sigma_t Z) - Z/\sigma_t \|^2 + \|s^\star(t,e^{- t} x + \sigma_t Z) - Z/\sigma_t \|^2) \mathrm{d}t} \\
    & \leq \sqrt{\E_Z \int_{\underline{\tau}}^{\overline{\tau}} \|(s-s^\star)((t,e^{- t} x + \sigma_t Z)\|^2 \mathrm{d}t} \sqrt{2(\gamma(s,x) + \gamma(s^\star,x))}.
\end{align*}
Let us decompose $\E_{X \sim p^\star} \nu_c(s,X)^2$ as follows.
\begin{align*}
     \E_{X \sim p^\star} \nu_c(s,X)^2 &  =  \E_{X \sim p^\star} (\nu_c(s,X)^2 \1_{A_n}(X)) + \E_{X\sim p^\star} ( \nu_c(s,X)^2 \1_{A_n^c}(X)).
\end{align*}
Since $\nu_c(s,X) \1_{\B(0,A_n)}(X) = \nu(s,X) \1_{\B(0,A_n)}(X)$, the above calculation yields the following bound for the first term:
\begin{align*}
    \E_{X \sim p^\star} (\nu_c(s,X)^2 \1_{A_n}(X))  & \leq  c_n \E_{X \sim p^\star} \E_Z \int_{\underline{\tau}}^{\overline{\tau}} \|(s-s^\star)((t,e^{- t} X + \sigma_t Z)\|^2 \mathrm{d}t \\
    & \quad =  c_n \E_{X \sim p^\star} (\nu(s,X)),
\end{align*}
for $\left | 2(\gamma(s,x) + \gamma(s^\star,x))\1_{\B(0,c_0K\log(n)}(x) \right | \leq c_n = C(\overline{\tau}-\underline{\tau})\log(n)^C g(\underline{\tau},\sigma,V) $, using the bounds on $\gamma(s,x)$ and $\gamma(s^\star,x)$ provided above and Theorem~\ref{thm:Vincent} for the last line. In turn, the second term may be bounded by
\begin{align*}
    \E_{X\sim p^\star} ( \nu_c(s,X)^2 \1_{A_n^c}(X)) 
    &  \leq c_n^2 \P_{X \sim p^\star}(A_n^c) \\
    & \leq c_n^2/n^4.
\end{align*}

\end{proof}

\subsection{Proof of Proposition~\ref{prop:aproxsuzuki}}\label{sec:coro:prop:aproxsuzuki}
\subsubsection{Preliminaries and plan for the proof}\label{sec:planoftheproofapprox}
Let $C_1^\star>0$ and define for all $k\in \{0,1,\ldots,m-1\}$ the set
\begin{equation}\label{eq:form-of-Bk}
    \mathcal{B}_k:= \{x\in \mathbb{R}^d|\ \mathrm{dist}(x,A_{\tau_k}^{1/n})\leq \log(n)^{-C_1^\star}\}.
\end{equation}
By Property~\hyperref[{assump:scoreregu}]{$(\mathcal{P}_\beta)$}.2, taking $C_1^\star>0$ large enough we have that \begin{equation}\label{eq:prop-of-Bk}
\mathcal{B}_k\subset A_{\tau_k}^{1/n^2}.
\end{equation}

In order to prove Proposition~\ref{prop:aproxsuzuki}, let us define a smooth approximation of the projection on the subset $\mathcal{B}_k$. This is possible as the boundary of $\mathcal{B}_k$ is $C^\infty$ as the set $A_t^\varepsilon$ is convex.
Let $\pi_k\in C^\infty(\mathbb{R}^d,\mathbb{R}^d)$ such that for all $x\in A_{\tau_k}^{1/n}$ we have $\pi_k(x)=x$ and for all $y\notin \mathcal{B}_k$ we have $\pi_k(y)=\pi_{\mathcal{B}_k}(y)$ the orthogonal projection on $\mathcal{B}_k$. In particular we can choose $\pi_k$ such that for all $i\in \mathbb{N}_{>0}$, we have
\begin{equation}
    \|\nabla^i \pi_k(x)\|\leq C_i \log(n)^{-C_\star^1(i-1)}.
\end{equation}
To avoid the bad behavior of $s^\star(\tau_k,\cdot)$ outside of $\mathcal{B}_k$, we are going to approximate the function
\begin{equation}\label{eq:sstark}
x\mapsto s^\star(\tau_k,\pi_k(x))+\pi_k(x)
\end{equation}
on the set
\begin{equation}\label{eq:lagrandeboule}
    \mathcal{B}:= \{x\in \mathbb{R}^d|\ \mathrm{dist}(x,\mathcal{B}_k)\leq \log(n)^{C_2^\star}\},
\end{equation}
with $C_2^\star>0$ to be fixed later such that for all $t > 0$ we have $A_t^{1/n^2}\subset \mathcal{B}.$ For lightness of notation, we define
\begin{equation}\label{eq:splusid}
    S^\star(t,x) := s^\star(t,x)+\frac{1}{\sigma^2}x,
\end{equation}
which is the function satisfying the regularity properties of Section~\ref{sec:model-properties}.

Let us  fix $k\in \{0,1,\ldots,m-1\}$ and take $t \in  [\tau_k,\tau_{k+1}].$ For $l\geq 0$, we define $P^{\lfloor l\rfloor}$ the $\lfloor l\rfloor$-th Taylor approximation which given an input $(a_q)_{q=0}^{\lfloor l\rfloor}$, outputs the polynomial $$P^{\lfloor l\rfloor}\big((a_q)_{q=0}^{\lfloor l\rfloor}\big)(t) = \sum_{q=0}^{\lfloor l\rfloor} a_q\frac{(t-\tau_k)^q}{q!}.$$ 
In particular, for a function $f\in \mathcal{H}^{l}_B$ we have
$$|f(t)-P^{\lfloor l\rfloor}\big((f^{(q)}(\tau_k))_{q=0}^{\lfloor l\rfloor}\big)(t)|\leq B|t-\tau_k|^{l}.$$

Using these Taylor approximations, we are going to build a polynomial function $s_N:[\tau_{k,j},\tau_{k,j+1}]\times \mathbb{R}^d\rightarrow \mathbb{R}^d$ such that
 $$\sup_{t\in [\tau_{k,j},\tau_{k,j+1}]}\sup_{x\in A_{\tau_k}^{1/n}}\|S^\star(t,x)-s_N(t,x)\|\leq \frac{C}{\tau_k}\log(n)^{C_2}n^{-\frac{\beta+1}{2\beta+d}},$$
 and  then show that there exists a neural network $\bar{s} \in \Psi(L_k, W_k,B_k,V_k,V_k^{'})_d^{d+1}$ such that
  $$
  \sup_{t\in [\tau_{k,j},\tau_{k,j+1}]}\sup_{x\in A_{\tau_k}^{1/n}}\|s_N(t,x)-\bar{s}(t,x)\|\leq n^{-1}.
  $$
We treat separately the cases $\tau_k\leq n^{-\frac{2}{2\beta+d}}$ and $n^{-\frac{2}{2\beta+d}}< \tau_k$ to exploit the increasing regularity of $s^\star(t,\cdot)$  as $t$ becomes larger.

\subsubsection{Proof of Proposition~\ref{prop:aproxsuzuki} in the case $t\leq n^{-\frac{2}{2\beta+d}}$}\label{sec:approxcasetpetit}
\paragraph{Approximation with polynomials of neural nets} Recalling the definition of $S^\star$ in~\eqref{eq:splusid} and using the regularity property~\eqref{eq:reguscorespace} we have that for all $q\in \mathbb{N}_{\geq 0}$
$$\|\partial_t^q S^\star(t,\cdot)\|_{\mathcal{H}^{(\beta+1-2q)\vee 1}(A_{t}^{1/n^2})}\leq Ce^{-t}\left(1+t^{-1-((q-\beta/2)\vee 0)}(1+\log(n)^{C_2})\right)$$
so from~\eqref{eq:prop-of-Bk} we deduce that
$$\|\partial_t^q S^\star(t,\pi_k(\cdot))\|_{\mathcal{H}^{(\beta+1-2q)\vee 1}(\mathcal{B})}\leq Ce^{-t}\left(1+t^{-1-((q-\beta/2)\vee 0)}(1+\log(n)^{C_2})\right).$$
Then, taking $\varepsilon:=\left(\tau_k^{-1/d}n^{\frac{d-2}{d(2\beta+d)}}\right)^{-1}$ and $q\in \{0,\ldots,\lfloor \frac{\beta+1}{2}\rfloor \}$, from Proposition~\ref{prop:approxrelu} we have that there exists a $\tanh$ neural network 
$$  
N_k^q\in \Psi\big(2,C(\log(n)^{C_2+C_2^\star}\varepsilon^{-1})^d ,n^{C_2},C\log(n)^{C_2}\tau_k^{-\frac{1}{2}-(q-\frac{\beta}{2})\vee 0},\infty\big)^d_d
$$ 
such  that  
\begin{equation}\label{eq:boundpartialtnn}
\sup_{x\in \mathcal{B}}\|\partial_t^q S^\star(\tau_k,\pi_k(x))-N_k^q(x)\|\leq Ce^{-t}\log(n)^{C_2}\tau_k^{-1-((q-\beta/2)\vee 0)} \varepsilon^{(\beta+1-2q)\vee 1} 
\end{equation}
and 
\begin{align}\label{eq:boundgradientnnx}
     \sup_{x\in \mathcal{B}}\|\nabla \partial_t^q S^\star(\tau_k,\pi_k(x))-\nabla N_k^q(x)\|
    & \leq Ce^{-t} \log(n)^{C_2}\tau_k^{-1-((q-\beta/2)\vee 0)} \varepsilon^{(\beta+1-2q)\vee 1-1}.
\end{align}
For $t\in [\tau_k,\tau_{k+1}]$ and $x\in \mathbb{R}^d$, we define the Taylor approximation of $S^\star(\cdot,\pi_k(x))$ with respect to time by
\begin{align}
    s_N(t,x) = P^{\lfloor \frac{\beta+1}{2}\rfloor}\big((N_k^q(x))_{q=0}^{\lfloor \frac{\beta+1}{2}\rfloor} \big)(t).
\end{align}
Then, for $x\in \mathcal{B}$ we have
\begin{align*}
        \|S^\star(t,\pi_k(x))-s_N(t,x)\|  & \leq \|S^\star(t,\pi_k(x))-P^{\lfloor \frac{\beta+1}{2}\rfloor}\big(\partial_t^q S^\star(\tau_k,\pi_k(x)))_{q=0}^{\lfloor \frac{\beta+1}{2}\rfloor }\big)(t)\|\\
        &
        \hspace{3em} 
        +
        \|P^{\lfloor \frac{\beta+1}{2}\rfloor}\big(\partial_t^q S^\star(\tau_k,\pi_k(x)))_{q=0}^{\lfloor \frac{\beta+1}{2}\rfloor }\big)(t)-s_N(t,x)\|.
\end{align*}
Using the time-regularity property~\eqref{eq:reguscoretime} we have
\begin{align*}
    \|S^\star(\cdot,\pi_k(x))\|_{ \mathcal{H}^{(\beta+1)/2}([t,\infty))}\leq C\left(1+t^{-1}(1+\log(n)^{C_2})\right),
\end{align*}
which gives for $x\in \mathcal{B}$,
\begin{align*}
    \|S^\star(t,\pi_k(x))-P^{\lfloor \frac{\beta+1}{2}\rfloor}\big((\partial_t^q S^\star(\tau_k,\pi_k(x)))_{q=0}^{\lfloor \frac{\beta+1}{2}\rfloor }\big)(t)\|
    & \leq C\log(n)^{C_2} \tau_k^{-1}(t-\tau_k)^{(\beta+1)/2}\\
    & \leq C\log(n)^{C_2} \tau_k^{-1} n^{-\frac{\beta+1}{2\beta+d}}.
\end{align*}
On the other hand, using~\eqref{eq:boundpartialtnn} we have
\begin{align*}
    &\|P^{\lfloor \frac{\beta+1}{2}\rfloor}\big(\partial_t^q S^\star(\tau_k,\pi_k(x)))_{q=0}^{\lfloor \frac{\beta+1}{2}\rfloor }\big)(t)-s_N(t,x)\|\\
    & \leq C \|\sum_{q=0}^{\lfloor \frac{\beta+1}{2}\rfloor} (\partial_t^q S^\star(\tau_k,\pi_k(x))-N_k^q(x))(t-\tau_k)^q\|\\
    &\leq C\log(n)^{C_2} \sum_{q=0}^{\lfloor \frac{\beta+1}{2}\rfloor}\tau_k^{-1-((q-\beta/2)\vee 0)}\left(\tau_k^{1/d}n^{-\frac{d-2}{d(2\beta+d)}}\right)^{(\beta+1-2q)\vee 1}(\tau_{k+1}-\tau_k)^{q}.
\end{align*}
Now, if $q\leq \beta/2$, recalling that $\tau_k^{1/2}\leq \tau_k^{1/d}n^{-\frac{d-2}{d(2\beta+d)}}\leq n^{-\frac{1}{2\beta+d}}$  we have
\begin{align*}
\tau_k^{-1}\left(\tau_k^{1/d}n^{-\frac{d-2}{d(2\beta+d)}}\right)^{\beta+1-2q}(\tau_{k+1}-\tau_k)^{q}&\leq \tau_k^{-1}\left(\tau_k^{1/d}n^{-\frac{d-2}{d(2\beta+d)}}\right)^{\beta+1}\left(\tau_k^{1/d}n^{-\frac{d-2}{d(2\beta+d)}}\right)^{-2q}(2\tau_k)^{q}\\
    &\leq C \tau_k^{-1} n^{-\frac{\beta+1}{2\beta+d}}.
\end{align*}
On the other hand, if $q> \beta/2$ we have
\begin{align*}
\tau_k^{-1-q+\beta/2}\tau_k^{1/d}n^{-\frac{d-2}{d(2\beta+d)}}(\tau_{k+1}-\tau_k)^{q}& \leq \tau_k^{-1+\beta/2}\tau_k^{1/d}n^{-\frac{d-2}{d(2\beta+d)}}\tau_k^{-q}(2\tau_k)^{q}\\
    &\leq C \tau_k^{-1} n^{-\frac{\beta+1}{2\beta+d}}.
\end{align*}
We can then conclude that for $x\in \mathcal{B}$,
\begin{align}\label{align:bckjdkshsksjdhdhd}
    \|S^\star(t,\pi_k(x))-s_N(t,x)\|  & \leq C\log(n)^{C_2} \tau_k^{-1} n^{-\frac{\beta+1}{2\beta+d}}.
\end{align}
\paragraph{Construction of the final neural net on $\mathcal{B}_k$}
Let us now show that there exists a neural network $\bar{s} \in \Psi(L_k, W_k,B_k,V_k,V_k^{'})^{d+1}_d$ such that
  $$
  \sup_{t\in [\tau_{k,j},\tau_{k,j+1}]}\sup_{x\in \mathcal{B}}\|s_N(t,x)-\bar{s}(t,x)\|\leq n^{-1}.
  $$
We know from Corollary
3.7 in~\cite{de2021approximation} that there exists a $\tanh$ neural network $P_N:\mathbb{R}^{d\lfloor \frac{\beta+1}{2}\rfloor+1}\rightarrow\mathbb{R}$ with depth  $O(1)$, width $O(1)$ and weights bounded by $O(n^{C_2})$ such that for all $i\in \{0,1\}$,
\begin{equation}\label{eq:nnaproxpoly}
\sup_{y\in [-n,n]^{d\times\lfloor \frac{\beta+1}{2}\rfloor},t \in [0,n]}\|\nabla^i P_N(y,t)-\nabla^i P^{\lfloor \frac{\beta+1}{2}\rfloor}\big((y_k)_{q=0}^{\lfloor \frac{\beta+1}{2}\rfloor} \big)(t)\|\leq n^{-1}.
\end{equation}
Using this approximation of polynomials, we define the $\tanh$ neural network $\bar{s}_1:\mathbb{R}^{d+1}$ by
\begin{equation}\label{eq:nnaproxpoly2}
\bar{s}_1(t,x):= P_N\Big(\big(N_k^q(x)\big)_{q=0}^{\lfloor \frac{\beta+1}{2}\rfloor},t\Big).
\end{equation}
In particular, 
the width of $\bar{s}_1$ is bounded by $O\left(\log(n)^{C_2}\left( n^{-\frac{d-2}{d(2\beta+d)}}\right)^d\right)$, depth by $O(1)$ and weights bounded by $O(n^{C_2})$, so $\bar{s}_1$
belongs to $\Psi(L_k, W_k,B_k, \infty,\infty)^{d+1}_d$ and satisfies from~\eqref{eq:nnaproxpoly} that
\begin{equation}\label{eq:ghudosjsbfhydjskzs}
  \sup_{t\in [\tau_{k,j},\tau_{k,j+1}]}\sup_{x\in \mathcal{B}}\|s_N(t,x)-\bar{s}_1(t,x)\|\leq n^{-1}.
\end{equation}
  
\paragraph{Construction of the final neural net on $\mathbb{R}^d$} Now, in order to have a neural net with finite $\lambda_{\max}$ on the whole $\mathbb{R}^d$ space, let us define the cut-off function
\begin{equation}\label{eq:deltaghjklh}
    \delta(x) := \left\{\begin{array}{ll} 1 & \text{if } x\in \mathcal{B}_k\\
     \mathrm{dist}(x,\mathcal{B}^c)\Big(\mathrm{dist}(x,\mathcal{B}_k)+\mathrm{dist}(x,\mathcal{B}^c)\Big)^{-1}& \text{if } x\in \mathcal{B}\backslash \mathcal{B}_k\\
     0 & \text{if } x\notin \mathcal{B}.
    \end{array}\right.
\end{equation}
Let \(\bar{\delta}\) and \(\bar{\times}\) denote the tanh neural network approximations of \(\delta\) and the multiplication operation respectively, both constructed to approximate their targets with precision \(n^{-1}\) in the \(\mathcal{H}^1\)-norm. Then, the final neural network \(\bar{s}\), defined on \([\tau_k, \tau_{k+1}] \times \mathbb{R}^d\), is given by:
\begin{equation}\label{eq:finalneuralnet}
    \bar{s}(t, x) := 
    \bar{\times}(\bar{\delta}(x), \bar{s}_1(t, x)).
\end{equation}

In particular we have that $\bar{s}$
belongs to $\Psi(L_k, W_k,B_k, \infty,\infty)^{d+1}_d$ and from~\eqref{align:bckjdkshsksjdhdhd} and~\eqref{eq:ghudosjsbfhydjskzs},
\begin{equation*}
  \sup_{t\in [\tau_{k,j},\tau_{k,j+1}]}\sup_{x\in A_{\tau_k}^{1/n}}\|S^\star(t,x)-\bar{s}(t,x)\|\leq C\log(n)^{C_2} \tau_k^{-1} n^{-\frac{\beta+1}{2\beta+d}}.
\end{equation*}
\paragraph{Bound on the sup-norm}
By definition, for $x\notin \mathcal{B}$, we have $\|\bar{s}(t,x)\|\leq n^{-1}$. Now, for  $x\in \mathcal{B}$ we have
\begin{align*}
    \|\bar{s}(t,x)\| &\leq \|\bar{s}_1(t,x)\|  \\
    &\leq \|\bar{s}_1(t,x)-P^{\lfloor \frac{\beta+1}{2}\rfloor} ((N_k^q(\cdot))_{q=0}^{\lfloor \frac{\beta+1}{2}\rfloor},t))\| +\|P^{\lfloor \frac{\beta+1}{2}\rfloor} ((N_k^q(\cdot))_{q=0}^{\lfloor \frac{\beta+1}{2}\rfloor},t))\| \\
    & \leq n^{-1}+C\sum_{q=0}^{\lfloor \frac{\beta+1}{2}\rfloor} \|N_k^q(x) (t-\tau_k)^q\|\\
    & \leq n^{-1}+\sum_{q=0}^{\lfloor \frac{\beta+1}{2}\rfloor} C\log(n)^{C_2}\tau_k^{-\frac{1}{2}-(q-\frac{\beta}{2})\vee 0}\tau_k^q\\
    &\leq C\log(n)^{C_2}\tau_k^{-1/2}
    \\
    &\leq V_k.
\end{align*}

\paragraph{Bound on $\lambda_{\max}$ for $x\in \mathcal{B}_k$}
For $x\in \mathcal{B}_k$ we have
\begin{align*}
    \lambda_{\max}(\nabla \bar{s}(t,x)) &\leq \lambda_{\max}(\nabla \bar{s}_1(t,x)-\nabla s_N(t,x)))+\lambda_{\max}(\nabla s_N(t,x)-\nabla S^\star(t,\pi_k(\cdot))(x))
    \\
    &\hspace{1em}
    +\lambda_{\max}(\nabla S^\star(t,\pi_k(\cdot))(x))
    \\
    & \leq  Cn^{-1}\sum_{q=0}^{\lfloor \frac{\beta+1}{2}\rfloor} \|\nabla N_k^q(x) \|+\lambda_{\max}(\nabla s_N(t,x)-\nabla S^\star(t,\pi_k(\cdot))(x))+Ce^{- t}t^{-1+\frac{\beta\wedge 1}{2}}.
\end{align*} 
Now, for any $x\in \mathcal{B}$, using~\eqref{eq:boundgradientnnx} we get
\begin{align*}
    n^{-1}\sum_{q=0}^{\lfloor \frac{\beta+1}{2}\rfloor} \|\nabla N_k^q(x)\|& \leq n^{-1}\sum_{q=0}^{\lfloor \frac{\beta+1}{2}\rfloor}\|\nabla \partial_t^q S^\star(\tau_k,\pi_k(x))-\nabla N_k^q(x)\|+\|\nabla \partial_t^q S^\star(\tau_k,\pi_k(x))\|\nonumber\\
     & \leq C \log(n)^{C_2}n^{-1}\sum_{q=0}^{\lfloor \frac{\beta+1}{2}\rfloor}\tau_k^{-1-((q-\beta/2)\vee 0)} \varepsilon^{(\beta+1-2q)\vee 1-1} +\tau_k^{-\frac{1}{2}-(q+\frac{1-\beta}{2})\vee 0}\\
    &\leq 
    C \log(n)^{C_2}n^{-1}\sum_{q=0}^{\lfloor \frac{\beta+1}{2}\rfloor}(\tau_k^{-1}\varepsilon^{\beta-2q}+\tau_k^{-\frac{1}{2}-(q+\frac{1-\beta}{2})\vee 0})\mathds{1}_{2q < \beta}+\tau_k^{-1-(q-\beta/2)}\mathds{1}_{2q \geq \beta}\\
    &\leq
    n^{-1}C \log(n)^{C_2} \sum_{q=0}^{\lfloor \frac{\beta+1}{2}\rfloor}(\tau_k^{-1/2}+\tau_k^{-1}(\varepsilon^{\beta-2q}+\tau_k^{\beta/2-q}))\mathds{1}_{2q < \beta}+\tau_k^{-1-(q-\beta/2)}\mathds{1}_{2q \geq \beta}\\
    &\leq
    C\tau_k^{-1/2}
    \\
    &\leq
    V_k^{'}
\end{align*}
and 
\begin{align*}
\lambda_{\max}(\nabla s_N(t,x)-\nabla S^\star(t,\pi_k(\cdot))(x))
&\leq \|\nabla S^\star(t,\pi_k(\cdot))(x)-\nabla P^{\lfloor \frac{\beta+1}{2}\rfloor}\big(\partial_t^q S^\star(\tau_k,\pi_k(\cdot)))_{q=0}^{\lfloor \frac{\beta+1}{2}\rfloor }\big)(t)(x)\|\\
&\hspace{2em} + \|\nabla P^{\lfloor \frac{\beta+1}{2}\rfloor}\big(\partial_t^q S^\star(\tau_k,\pi_k(\cdot)))_{q=0}^{\lfloor \frac{\beta+1}{2}\rfloor }\big)(t)(x)-\nabla s_N(t,x)\|
\\
& \leq  \|\nabla S^\star(t,\pi_k(\cdot))(x)-\sum_{q=0}^{\lfloor \frac{\beta+1}{2}\rfloor}\nabla \partial_t^q S^\star(\tau_k,\pi_k(\cdot))(x)\frac{(t-\tau_k)^q}{q!}\|
\\
&\hspace{2em} 
+ C\sum_{q=0}^{\lfloor \frac{\beta+1}{2}\rfloor}(t-\tau_k)^q \|\nabla \partial_t^q S^\star(\tau_k,\pi_k(\cdot))(x)-\nabla N_k^q(x)\|
    \\
    & \leq \|\nabla \partial_t^{\lfloor \frac{\beta+1}{2}\rfloor} S^\star(\cdot,\pi_k(x))\circ \nabla \pi_k(x)\|_{\mathcal{H}^{\frac{\beta+1}{2}-\lfloor \frac{\beta+1}{2}\rfloor}([\tau_k,\infty))}\tau_k^{\frac{\beta+1}{2}} \\
    &\hspace{2em}
    +C\log(n)^{C_2}\sum_{q=0}^{\lfloor \frac{\beta+1}{2}\rfloor}\tau_k^q \big(\tau_k^{-1-((q-\beta/2)\vee 0)} \varepsilon^{(\beta+1-2q)\vee 1-1}\big)
    \\
    & \leq C\log(n)^{C_2}\left(\tau_k^{\frac{\beta+1}{2}}\tau_k^{-1-(( \frac{\beta+1}{2}-\beta/2)\vee 0)}+\tau_k^{-1+\beta/2}\right)\\
    & \leq V_k^{'}.
\end{align*}

\paragraph{Bound on $\lambda_{\max}$ for $x\notin \mathcal{B}_k$} By construction, we have $\|\nabla \bar{s}(t,x)\|\leq n^{-1}$ for all 
    $x\notin \mathcal{B}$. Now, for $x\in \mathcal{B} \setminus \mathcal{B}_k $ we have 
\begin{align*}
    \lambda_{\max}(\nabla \bar{s}(t,x))
    &\leq \lambda_{\max}\big(\nabla \bar{s}(t,x)-\nabla (y\mapsto \bar{\delta}(y) \bar{s}_1(t, y))(x)\big)+\lambda_{\max}\big(\nabla (y\mapsto \bar{\delta}(y) \bar{s}_1(t, y))(x)\big) \\
    &\leq n^{-1}\big(\|\nabla \bar{s}_1(x)\|+\|\nabla\bar{\delta}(x)\|\big)+ \bar{\delta}(x)\lambda_{\max}(\nabla \bar{s}_1(x))+ \|\bar{s}_1(x)\|\|\nabla \bar{\delta}(x)\|.
\end{align*}
On one hand, from the precedent paragraph and property~\eqref{eq:reguscorespace} we have 
\begin{align*}
    n^{-1}\|\nabla \bar{s}_1(x)\|&\leq n^{-1}\big(\|\nabla \bar{s}_1(x)-\nabla S^\star(t,  \pi_k(\cdot))(x) \| +\|\nabla S^\star(t,  \pi_k(\cdot))(x)\|\big)\\
    &\leq n^{-1}\big(V_k^{'}+C\log(n)^{C_2}e^{-t}t^{-\big(1-\frac{\beta}{2})\vee 0\big)} \big)\\
    &\leq V_k^{'}.
\end{align*}
On the other hand, recalling the definitions of $\delta$ and $\mathcal{B}$ in~\eqref{eq:deltaghjklh} and~\eqref{eq:lagrandeboule} respectively, we deduce that
\begin{align*}
   n^{-1}\|\nabla\bar{\delta}(x)\| \leq C n^{-1}\log(n)^{-C_2^\star}\leq V_k^{'}.
\end{align*}
Finally, we have
\begin{align*}
\bar{\delta}(x)\lambda_{\max}(\nabla \bar{s}_1(x))+ \|\bar{s}_1(x)\|\|\nabla \bar{\delta}(x)\|
 &\leq  \lambda_{\max}\big(\nabla \bar{s}_1-\nabla  S^\star(t,  \pi_k(\cdot))(x)\big)+\lambda_{\max}\big(\nabla S^\star(t,  \pi_k(\cdot))(x)\big)
 \\
 &
 \hspace{2em} + Ce^{-t}\log(n)^{C_2}t^{-1/2}\|\nabla \bar{\delta}(x)\|
 \\
   & \leq  V_k^{'}+Ce^{-t}\log(n)^{C_2}t^{-1/2}\log(n)^{-C_2^\star}\\
   & \leq   V_k^{'},
\end{align*}
taking $C_2^\star>0$ large enough in~\eqref{eq:lagrandeboule}.

\subsubsection{Proof of Proposition~\ref{prop:aproxsuzuki} in the case \( t > n^{-\frac{2}{2\beta + d}} \)}  
The main difference from the case \( t \leq n^{-\frac{2}{2\beta + d}} \) lies in the polynomial approximation of neural networks (first paragraph of Section~\ref{sec:approxcasetpetit}). We therefore focus on detailing this aspect, as the remainder of the proof closely follows the arguments of Section~\ref{sec:approxcasetpetit}.

To obtain a more refined approximation of $S^\star$, this case is based on a similar construction as above, but requires an additional refinement of intervals $[\tau_k,\tau_{k+1}]$ into $\Upsilon_k$ sub-intervals, as described in Section~\ref{sec:scoreapp}. Furthermore, we use the additional regularity of $S^\star(t,\cdot)$ gained from the fact that $t$ is large. 
Let us set $\varepsilon := \left(\tau_k^{-1/d}n^{\frac{d-2}{d(2\beta+d)}}\right)^{-1}$, $\gamma :=( 2\beta+d)/(d-2)$ and $\gamma_2 := (\beta+1)(d-2)$. Recalling that 
$
\tau_{k,j}
:=
\left(1 + j/\Upsilon_k \right) \tau_k
$, let $j\in \{0,\ldots,\Upsilon_k \}$ and $q\in \{0,\ldots,\lfloor \frac{\beta+1}{2} +\gamma_2\rfloor\}$.
We let
$$
N_{k,j}^q\in  \Psi\big(2,C(\log(n)^{C_2+C_2^\star}\varepsilon^{-1})^d ,n^{C_2},C\log(n)^{C_2}\tau_k^{-\frac{1}{2}-(q-\frac{\beta}{2})\vee 0},\infty\big)^d_d
$$ 
be a $\tanh$ neural network given by Proposition~\ref{prop:approxrelu}, such  that 
$$\sup_{x\in \mathcal{B}}\|\partial_t^q S^\star(\tau_{k,j},\pi_k(x))-N_{k,j}^q(x)\|\leq C\log(n)^{C_2} \tau_k^{-q-\frac{1+\gamma}{2}} \varepsilon^{\beta+\gamma} $$
and
$$\sup_{x\in \mathcal{B}}\|\nabla \partial_t^q S^\star(\tau_{k,j},\pi_k(x))-\nabla N_{k,j}^q(x)\|\leq C\log(n)^{C_2} \tau_k^{-q-\frac{1+\gamma}{2}} \varepsilon^{\beta+\gamma-1}.$$
For $t\in [\tau_{k,j},\tau_{k,j+1}]$, define
\begin{align*}
    s_N(t,x) 
    := 
    P^{\lfloor \frac{\beta+1}{2} +\gamma_2\rfloor}\big((N_{k,j}^q(x))_{q=0}^{\lfloor \frac{\beta+1}{2}+\gamma_2\rfloor} \big)(t)
    .
\end{align*}
Then, we have 
\begin{align*}
    \|S^\star(t,\pi_k(x))-s_N(t,x)\|  & \leq \|S^\star(t,\pi_k(x))-P^{\lfloor\frac{\beta+1}{2} +\gamma_2\rfloor}\big((\partial_t^q S^\star(\tau_{k,j},\pi_k(x)))_{q=0}^{\lfloor\frac{\beta+1}{2} +\gamma_2\rfloor}\big)(t)\| \\
    &\hspace{3em} 
    + \|P^{\lfloor\frac{\beta+1}{2} +\gamma_2\rfloor}\big((\partial_t^q S^\star(\tau_{k,j},\pi_k(x)))_{q=0}^{\lfloor\frac{\beta+1}{2} +\gamma_2\rfloor}\big)(t)-s_N(t,x)\|\\
    & \leq C\log(n)^{C_2} \tau_k^{-(1+\gamma_2)}(\tau_{k,j+1}-\tau_{k,j})^{(\beta+1)/2+\gamma_2}\\
    &\hspace{3em} 
    +C\|\sum_{q=0}^{\lfloor\frac{\beta+1}{2} +\gamma_2\rfloor} (\partial_t^q S^\star(\tau_{k,j},\pi_k(x))-N_{k,j}^q(x))(t-\tau_{k,j})^q\|.
\end{align*}
Now, for the first term we have
\begin{align*}
    \tau_k^{-(1+\gamma_2)}(\tau_{k,j+1}-\tau_{k,j})^{(\beta+1)/2+\gamma_2}& \leq C \tau_k^{-(1+\gamma_2)}(\tau_k(\tau_k^{-1}n^{\frac{-2}{2\beta+d}})^{\frac{d-2}{d}})^{(\beta+1)/2+\gamma_2}\\
    & \leq C \tau_k^{-(1+\gamma_2)+\frac{2}{d}((\beta+1)/2+\gamma_2)}n^{\frac{-(d-2)(\beta+1+2\gamma_2)}{d(2\beta+d)}}\\
    & \leq C\tau_k^{-1}n^{-\frac{\beta+1}{2\beta+d}}.
\end{align*}
On the other hand,
\begin{align*}
&
\|\sum_{q=0}^{\lfloor\frac{\beta+1}{2} +\gamma_2\rfloor} (\partial_t^q S^\star(\tau_{k,j},\pi_k(x))-N_{k,j}^q(x))(t-\tau_{k,j})^q\|
\\
&\hspace{3em}
\leq C\sum_{q=0}^{\lfloor\frac{\beta+1}{2} +\gamma_2\rfloor}\tau_k^{-q-\frac{1+\gamma}{2}}\left(\tau_k^{1/d}n^{-\frac{d-2}{d(2\beta+d)}}\right)^{\beta+\gamma}(\tau_k(\tau_k^{-1}n^{\frac{-2}{2\beta+d}})^{\frac{d-2}{d}})^{q}\\
&\hspace{3em}
\leq C\log(n)^{C_2}\sum_{q=0}^{\lfloor\frac{\beta+1}{2} +\gamma_2\rfloor} \tau_k^{-1}n^{-\frac{\beta+1}{2\beta+d}}\\
&\hspace{3em}
\leq
C\log(n)^{C_2}\tau_k^{-1}n^{-\frac{\beta+1}{2\beta+d}}.
\end{align*}

Finally, as in the case $t\leq n^{-\frac{2}{2\beta+d}}$, we have that $s_N$ can be approximated by a tanh neural network $\bar{s} \in  \Psi(L_k, W_k,B_k, V_k,V_k^{'})^{d+1}_d$ up to an error $n^{-1}$ on  $A_{\tau_k}^{1/n}$, which concludes the proof.

\section{Proof of Theorem~\ref{thm:borneinf}}
\label{sec:proof_of_proposition-borneinf}

We slightly adapt the proof of~\cite[Theorem~2]{Donoho96} and~\cite[Theorem~4]{Uppal19}, using standard bayesian-like lower bound arguments comparing perturbed distributions.

\begin{proof}[Proof of Theorem~\ref{thm:borneinf}]

As a base density, let us choose the standard gaussian 
\begin{align*}
    g_0(x) := (2 \pi)^{-d/2} \exp(-\|x\|^2/2).
\end{align*}
To perturb it, let $\psi:\R^d \to \R$ denote a kernel-like function such that
\begin{itemize}
    \item $\psi \in \mathcal{C}^{\infty}(\R^d,\R)$;
    \item $\mathrm{Supp}(\Psi) \subset (-1,1)^d$;
    \item $\int_{\R^d} \Psi(u) du = 0$, and $\int_{\R^d} \Psi^2(u) du = 1$.
\end{itemize}
Let us now fix a resolution $j \in \mathbb{N}^*$ to be chosen later. Set $m := 2^{dj}$.
Given a multi-index $\mathbf{k}$ of the grid $\{0,\ldots,2^j-1 \}^d \subset \R^d$, and $x \in \R^d$, we write
\begin{align*}
    \psi_{\mathbf{k}}(x) := \sqrt{m} \psi(2^j x -(2^j-1) \mathbbm{1}_d + 2 \mathbf{k}),
\end{align*}
where $\1_d:=(1, \hdots, 1)$. Functions $\psi_{\mathbf{k}}$ correspond to translations and scalings of the base perturbation~$\Psi$. They cover a grid of the hypercube $[-1,1]^d$ at scale $2^{-j}$, and are designed so as to have disjoint supports and be normalized, in the sense that $\int_{\R^d} \psi_{\mathbf{k}}^2 = 1$.

Given $\varepsilon \in \{-1,1 \}^{m}$, consider the function
$$
g_{\varepsilon} :=  g_0 + c_1 \sum_{\mathbf{k} \in \{0,\ldots,2^j-1\}^d} \varepsilon_{\mathbf{k}} \psi_{\mathbf{k}}
,
$$
for some constant $c_1>0$ to be chosen later. Its integral is trivially equal to one. 
Furthermore, the function
\begin{align*}
    \frac{g_{\varepsilon}}{g_0} - 1 = \frac{c_1}{g_0}\sum_{\mathbf{k} \in \{0,\ldots,2^j-1\}^d} \varepsilon_{\mathbf{k}} \psi_{\mathbf{k}},
\end{align*}
has support included in $[-1,1]^d$, and $g_0 \geq c >0$ on this support. As a result, it holds
\begin{align*}
    \left | \frac{g_{\varepsilon}}{g_0} - 1 \right | \leq 1/2
\end{align*}
as soon as  $c_1 \leq  c \|\psi\|_\infty^{-1} 2^{-dj/2 + 1}$. Since $\log \in \mathcal{H}_{C_\beta}^{\lceil \beta \rceil}((1/2;3/2))$, for some constant $C_\beta$, and $g_\varepsilon/g_0 \in \mathcal{H}_{C c_1 2^{j(\beta+d/2)}}^\beta(\R^d)$, the Faa di Bruno formula ensures that $\log(g_\varepsilon/g_0) \in \mathcal{H}_{C_\beta c_1 2^{j(\beta+d/2)}}^\beta(\R^d)$.  Thus choosing $c_1 := c_K 2^{-j(\beta + d/2)}$ for $c_K$ small enough yields that  $\log(g_{\varepsilon}/g_0) \in \mathcal{H}_{C_K}^\beta$. For such a $c_1$, it holds $g_\varepsilon = g_0 e^{a_\varepsilon}$, for $a_\varepsilon \in \mathcal{H}_{C_K}^\beta$. According to Proposition~\ref{prop:mixtures-in-model}, we deduce that $g_\varepsilon \in \mathcal{A}_K^\beta$.

Therefore, the class
$\Omega^{(j)} := \left \{ g_\varepsilon \right\}_{\varepsilon \in \{-1,1 \}^{m}}
$
is composed of densities, and we have $\Omega^{(j)} \subset \mathcal{A}_K^\beta$.

For $\varepsilon, \varepsilon' \in \{-1,1\}^{2^{dj}}$, let $\omega(\varepsilon, \varepsilon')$ denote the Hamming distance 
\begin{align*}
    \omega(\varepsilon, \varepsilon') = \sum_{\mathbf{k} \in \{0,\ldots,2^J-1\}^d} \1_{\varepsilon_\mathbf{k} \neq \varepsilon'_\mathbf{k}}
    .
\end{align*} 
We consider the test function
\begin{align*}
    f_{\varepsilon} :=
    \sum_{\mathbf{k} \in \{0,\ldots,2^J-1\}^d} \varepsilon_{\mathbf{k}} 2^{-j(d/2+1)} \frac{\Psi_{\mathbf{k}}}{\|\Psi\|_{\mathcal{H}^1}}
    .
\end{align*}
By construction, we have $\|f_\varepsilon\|_{\mathcal{H}^1} \leq 1$, and therefore $f_\varepsilon$ is $1$-Lipschitz. 
Consequently, the Wasserstein distance between $g_\varepsilon$ and $g_{\varepsilon'}$ is lower-bounded by
\begin{align*}
    \mathrm{W}_1(g_\varepsilon,g_{\varepsilon'}) & \geq \int_{\R^d} f_{\varepsilon}(u)(g_\varepsilon(u) - g_{\varepsilon'}(u))\mathrm{d}u \\
    & = \sum_{\mathbf{k} \in \{0,\ldots,2^J-1\}^d} \int_{\R^d} 2^{-j(d/2+1)} (\|\Psi\|_{\mathcal{H}^1})^{-1} \varepsilon_\mathbf{k} \psi_{\mathbf{k}}(u) c_1 (\varepsilon_{\mathbf{k}} -  \varepsilon'_{\mathbf{k}}) \psi_{\mathbf{k}}(u)\mathrm{d}u \\
    & = 2^{-j(d/2+1)} (\|\Psi\|_{\mathcal{H}^1})^{-1} c_1 \sum_{\mathbf{k} \in \{0,\ldots,2^J-1\}^d} 2 \1_{\varepsilon_{\mathbf{k}} \neq \varepsilon'_{\mathbf{k}}} \int_{\R^d} \psi^2_{\mathbf{k}}(u) \mathrm{d}u \\
    & = 2 c_K 2^{-j(\beta + d +1)} (\|\Psi\|_{\mathcal{H}^1})^{-1} \omega(\varepsilon,\varepsilon').
\end{align*}
Using the Varshamov-Gilbert Lemma~\cite[Lemma 2.9, p. 104]{Tsybakov08} provides the existence of a subset $\Omega^{(j)}_\ast \subset \Omega^{(j)}$ such that
\begin{itemize}
    \item $M_\ast:=|\Omega^{(j)}_\ast| \geq 2^{m/8}$;
    \item for all distinct $\varepsilon,\varepsilon' \in \Omega^{(j)}_\ast$, $\omega(\varepsilon,\varepsilon') \geq m/8$. 
\end{itemize}
Next, according to~\cite[Lemma 24 - Supplementary Material]{Uppal19}, it holds
\begin{align*}
    \mathrm{KL}(g_\varepsilon^{\otimes n} \mid g_{0}^{\otimes n}) 
    &=
    n
    \mathrm{KL}(g_\varepsilon \mid g_{0}) 
    \\
    &\leq 
    c n \| g_0 - g_\varepsilon\|^2_{L^2}/2 
    \\
    &\leq
    n c c_1^2  2^{dj} 
    \\
    &\leq 
    n c_K 2^{-2j \beta}
    ,
\end{align*}
where $g_\varepsilon^{\otimes n}$ denotes the $n$-times tensor product of $g_\varepsilon$ with itself. 
Choosing $j := \lfloor \log_2(n)/(2\beta +d) \rfloor$ thus yields
\begin{align*}
    \mathrm{KL}(g_\varepsilon^{\otimes n} \mid g_{0}^{\otimes n}) 
    &\leq
    c_K n^{\frac{d}{2\beta +d}} 
    \\
    &\leq
    \frac{1}{10} \log M_\ast,
\end{align*}
provided $c_K$ is small enough. 
A direct application of Fano-Birgé Lemma~\cite[Theorem~2.5]{Tsybakov08} hence entails 
\begin{align*}
    \inf_{\widehat{p}_n} \sup_{\varepsilon \in \Omega^{(j)}_\ast} 
    \E_{g_\varepsilon^{\otimes n}} \mathrm{W}_1(g_\varepsilon,\widehat{p}_n) 
    &\geq 
    (1/4) (2^{dj}/8)  (c_K 2^{-j(\beta + d +1)} / \|\Psi\|_{\mathcal{H}^1}) 
    \\
    &\geq 
    c'_K n^{- \frac{\beta+1}{2 \beta + d}}
    ,
\end{align*}
which yields the announced result. 
\end{proof}

\end{document}